\newcommand{\ev}{{\rm Eval}}  
\newcommand{\inte}{{\rm Prim}} 
\newcommand{\deri}{{\rm Deri}} 
\newcommand{\id}{{\rm Id}}   
\newcommand{\card}{{\rm Card}}  
\newcommand{\const}{{rm\log(2)\rule{0mm}{4mm}+r\left(\log(rm+1)+rm\log\left(\rule{0mm}{3.5mm}\frac{rm+1}{rm}\right)\right)}} 
\theoremstyle{plain} %text of this environment is typesetted in italics
\newtheorem{theorem}{\indent\sc Theorem}%[section]
\newtheorem{lemma}{\indent\sc Lemma}
\newtheorem{corollary}{\indent\sc Corollary}
\newtheorem{proposition}{\indent\sc Proposition}
\theoremstyle{definition} %text of this environment is typesetted in roman letters
\newtheorem{definition}{\indent\sc Definition}
\newtheorem{facts}{\indent\sc Fact}
\newtheorem{remark}{\indent\sc Remark}
\newtheorem{example}{\indent\sc Example}
\newtheorem{notation}{\indent\sc Notation} 
\newcommand{\C}{\mathbb{C}}
\newcommand{\R}{\mathbb{R}}
\newcommand{\ru}{{\R}} 
\newcommand{\Q}{\mathbb{Q}}
\newcommand{\sign}{{\rm sign}}
\newcommand{\Z}{\mathbb{Z}} 
\newcommand{\qu}{{\Q}}
\newcommand{\zu}{{\Z}}
\newcommand{\pu}{{\mathbb{P}}}
\newcommand{\N}{\mathbb{N}}
\newcommand{\Li}{\rm{Li}}
\newcommand{\cd}{\cdots}
\def\cd{\cdots}
\def\2{I\hspace{-.1em}I}
\def\Li{\textrm{Li}}
\title{Linear Forms in Polylogarithms}
\author{\textsc{Sinnou David}, \textsc{Noriko Hirata-Kohno} and \textsc{Makoto Kawashima}}
\date{}
\begin{document}

\maketitle

\begin{abstract}
Let $r, \,m$ be positive integers.
Let$x$ be a rational number with $0 \le x <1$. Consider
$\Phi_s(x,z) =\displaystyle\sum_{k=0}^{\infty}\frac{z^{k+1}}{{(k+x+1)}^s}$ the $s$-th Lerch function with $s=1, 2, \cd, r$.
When $x=0$, this is a polylogarithmic function.
Let $\alpha_1, \cdots, \alpha_m$
be pairwise distinct algebraic numbers of \emph{arbitrary degree} over the rational number field, 
with $0<|\alpha_j|<1 \,\,\,(1\leq j \leq m)$.
In this article, we show a criterion for the linear 
independence, over {{an}} algebraic number field containing $\Q(\alpha_1, \cdots, \alpha_m)$,  of all the $rm+1$ numbers~:
$\Phi_1(x,\alpha_1)$, \,$\Phi_2(x,\alpha_1), \,$
$\cdots , \ , \Phi_r(x,\alpha_1)$, \,\,$\Phi_1(x,\alpha_2)$, \,$\Phi_2(x,\alpha_2), \,$
$\cdots , \, \Phi_r(x,\alpha_2), \cdots \cdots, \Phi_1(x,\alpha_m)$, 
~$\Phi_2(x,\alpha_m)$, ~$\cdots , \Phi_r(x,\alpha_m)$ and $1$. This is the first result that gives a sufficient condition for
the linear independence of values of the Lerch functions at  \emph{several}  \emph{distinct algebraic points},
\emph{{not necessarily lying in the rational number field nor in quadratic imaginary fields}.}
We give a complete proof with refinements and quantitative statements of the main theorem
announced in \cite{DHK2}.
%}}%%%%%%%%%%%%
\end{abstract}
Mathematics Subject Classification (2020): 11J72 (primary); 11J82, 11J61 (secondary).

\section{Introduction}

We study the linear independence over number fields, {{as well as}} explicit linear independence measures, of
values of the Lerch functions at different  algebraic points. 
 
Following  the pioneering work of Pad\'e  ({confer \cite{Pade1}, \cite{Pade2}}), Pad\'e approximations have evolved as major tool in
Diophantine problems.
They have been extensively {{conducted}} over the past decades, since they naturally provide for sharp approximations and outstanding applications in various researches:
for the study of algebraic functions on Jacobians of irreducible curves by E.~Bombieri, P.~Cohen and U.~Zannier{, see \cite{bomcoza}} and for
Pellian equations over function fields, by Zannier, {\em confer} \cite{Za}. 

The linear independence of values of special functions is our present main focus. In our set up, we go back to previous works
by A.~I.~Galochkin \cite{G1}, %\cite{G2}, 
 G.~V.~Chudnovsky  \cite{ch11}, Y.~Z.~Flicker, \cite{Fli}, K.~V$\ddot{\text{a}}$$\ddot{\text{a}}$n$\ddot{\text{a}}$nen \cite{Va} and W. Zudilin \cite{Z}. Indeed, polylogarithmic functions as well as the Lerch functions fall in the class of $G$-functions in the sense of C.~L.~Siegel \cite{Siegel}. 
Galochkin proved a conditional result of the linear independence, and Chudnovsky showed that the
set of functionally linearly independent $G$-functions automatically satisfies Galochkin's condition.
Further it was generalized by Flicker for the $p$-adic version, by V$\ddot{\text{a}}$$\ddot{\text{a}}$n$\ddot{\text{a}}$nen for simultaneous approximations,
and by Zudilin for the irrationality of values of $G$-functions over the rational number field.
We refer to our previous work \cite{DHK2} for a detailed discussion of the above mentioned results. 
 
However, several issues need to be faced to pursue the Pad\'e program: firstly, a concrete system of Pad\'e approximants must  be constructed.
This can be done either explicitly, or by Siegel's lemma via Dirichlet's box principle. 
Indeed, explicit construction of such systems usually gives sufficiently sharp estimates, but it has been achieved for specific functions only, conversely, Siegel's lemma facilitates tackling larger classes, albeit the cost of estimate. The second route has been taken by 
Galochkin, {{Chudnovsky, Flicker and V$\ddot{\text{a}}$$\ddot{\text{a}}$n$\ddot{\text{a}}$nen ({\it confer loc. cit.}).
{On the other hand, in a closely related question, explicit constructions to obtain a lower bound for the dimension
of the subspace spanned by such values over $\mathbb{Q}$ have been taken by  S.~Fischler-J.~Sprang-Zudilin \cite{FSZ}, leading to very sharp results.  P. Bel and  Sprang showed $p$-adic analogues \cite{Bel}{,} \cite{SprangL} of lower bounds of this kind.}

Finally, 
{\it provided} one proves the injectivity of the
evaluation maps (indeed, the non-vanishing {of a} Wronskian of Hermite  type) associated to the constructed Pad\'e system, 
one derives the irrationality or arithmetical properties of special values in question.
Namely, whenever we succeed in proving the injectivity,  a fully effective treatment of the given Diophantine problems {{can be achieved}}, 
associated to the class of functions.

The linear independence of  $\Li_s(\alpha)$ at one $\alpha\in\Q$, with $1\leq s \leq r$ was proven by E.~M.~Nikisin \cite{N}
and was generalized in \cite{H-I-W}, \cite{H-K-S}, \cite{Ka}.
M.~Hata \cite{Ha}{,} \cite{Ha1993} 
obtained the irrationality of $\Li_2(1/q)$ for $q\geq 7$ or $q\leq -5$.
T.~Rivoal \cite{Ri} and R.~Marcovecchio \cite{marc} succeeded in showing
a lower bound for the dimension of a vector space spanned by polylogarithms over an algebraic number field.
M.~A.~Miladi \cite{Mi} showed
the linear independence of $1, \,\Li_1(1/q)=-\Li_1(1/(1-q))$, $\Li_2(1/q)$ and $\Li_2(1/(1 -q))$ over $\Q$
for an integer $q\geq 11$, giving a complete proof for an announcement by D.~V. and G.~V.~Chudnovsky.
In 2018, 
C.~Viola and Zudilin \cite{VZ1} adapted the method due to G.~Rhin and Viola \cite{RV1}
to establish the linear independence of 
$1, \, \Li_1(\alpha)$, $\Li_2(\alpha)$, $\Li_2(\alpha/(\alpha-1))$ for a larger set of $\alpha\in\Q$.

At distinct algebraic numbers $\alpha_1, \cdots, \alpha_m$ satisfying certain conditions,
Rhin and P.~Toffin \cite{R-T} showed the linear independence
of the natural  logarithm by Pad\'e approximations of {{the second kind}.}
Although their insight was underexplored in the subsequent years,  we undertook here
to generalize their construction to deal with polylogarithms
as well as values of distinct algebraic numbers of the Lerch function to
obtain the linear independence and the irrationality of these values.

More general approach {have} also been extensively studied: Rivoal constructed explicit systems of Pad\'e approximants of {{the first kind}}
for the Lerch functions at one point \cite{Rilerch}.
Yu.~Nesterenko constructed ex\-pli\-cit systems of Pad\'e approximants of generalized hypergeometric functions \cite{Nest} and
developed earlier constructions of P.~L.~Ivankov \cite{Ivan}.

\bigskip

However, the Pad\'e approximants come with a codicil. Even when constructed, one may face a difficulty coming from the 
size of the coefficients of the approximants: they 
sometimes become  too large, for any meaningful result to be derived.
This phenomenon is often unavoidable, however, an essential improvement can be obtained, as is shown in \cite{bomcoza}, \cite{Za}, by deep observations of the objects.
Bombieri ({\it confer} \cite{bom}) circumvented the difficulty 
by essentially not going to the maximal possible order of approximation achieved, thus keeping some room to reduce the height of the coefficients of
the approximants.
This still leaves questions unanswered, {\it confer} \cite{bomhunpoor} for a discussion.
 
As hinted above, one needs to prove that the system of approximants is a non-trivial one. The injectivity of the associated evaluation maps translates into whether 
certain determinants of Hermite type vanish or not.  There {{has been
no systematic recipe for  this most difficult part of the Pad\'e program.}}

\vspace{\baselineskip}

Let  $s$ be a positive integer and $x$ be a rational number with $0 \le x <1$.  
In this article, we show the linear independence of values of the $s$-th Lerch function
defined by (for $p=\infty$ or a rational prime)
$$
{{\Phi_{s,p}}}(x,z)=\Phi_s(x,z) =\sum_{k=0}^{\infty}\frac{z^{k+1}}{{(k+x+1)}^s}~, \,\,\,\,\,\, {{z\in \C_p, \,\,\,  \vert z\vert_p<1}}\,,
$$
at distinct algebraic points with quantitative linear independence measures.

When $x=0$, this function is 
${\rm{Li}}_s(z)=\displaystyle\sum_{k=0}^{\infty}\frac{z^{k+1}}{{(k+1)}^s}~$: the $s$-th polylogarithmic function. 
The $s$-th Lerch function $\Phi_s(x,z)$ satisfies the inhomogenous differential equation\kern3pt
\,\,\,\,{{$\left(z\tfrac{d}{dz}+x\right)\Phi_s(x,z)=\Phi_{s-1}(x,z)$}} and 
the Lerch function is a $G$-function in the sense of Siegel  \cite{Siegel}.

Let $r$ be a positive integer. 
Consider $r$ Lerch functions $\Phi_s(x,z), \,\,1\leq s \leq r$ and let $K$ be an algebraic number field of arbitrary finite degree over $\Q$.
Let  $\alpha_1, \cdots, \alpha_m\in K$ be pairwise distinct, 
being sufficiently close to the origin, {{namely satisfying standard metric conditions in Pad\'e approximation}}, that we will later precise.

In this article, we give a criterion to show
the linear 
independence over $K$ of all the $rm+1$ numbers~: 
$\Phi_1(x,\alpha_1)$, \,$\Phi_2(x,\alpha_1), \,$
$\cdots , \ , \Phi_r(x,\alpha_1)$, \,\,$\Phi_1(x,\alpha_2)$, \,$\Phi_2(x,\alpha_2), \,$
$\cdots , \, \Phi_r(x,\alpha_2), \cdots \cdots, \Phi_1(x,\alpha_m)$, 
~$\Phi_2(x,\alpha_m)$, ~$\cdots , \Phi_r(x,\alpha_m)$ and $1$.

Our approach is inspired by previous
works due to Nikisin \cite{N} and  Rhin-Toffin \cite{R-T}. We construct explicit approximants, thus avoiding Siegel's lemma, but do choose a {\it formal} construction. This has a double advantage: it is possible to write down explicit {{formul\ae}} at every single step of the proof, and additionally it is also feasible to {perform} estimates using the formal definition of the approximants. Indeed, it makes analytic {{method}} easier to handle, {since one can replace  sophisticated estimates involving explicit formulae of the approximants, by simple operator norm estimates, see section~\ref{analy}}.  

The second main ingredient is the  key proposition ensuring non-vanishing of the Hermite type determinant that guarantees the linear independence of the system of approximants being chosen.
The proof of this non-vanishing statement itself relies on the formal construction of the approximants which makes factorization easier to {{carry out}}. 
We take advantage of the explicit nature of the construction at the last step of the proof of this proposition~: when all the variables are specialized, one is reduced to an explicit numerical determinant involving integrals of powers of the classical logarithm function.  

\vspace{\baselineskip}
{Recall that for a} rational shift $x$, the associated Lerch functions are $G$-functions~: this ensures that the size of the approximants is small enough. 
For algebraic irrational shifts $x$, however, this becomes {{an obstacle}}~: see Remark 2.2 (ii) below.
{{We also note that Zudilin showed that the functions $\Phi_1(x,z), \cdots, \Phi_r(x,z)$ are algebraically independent over $\C(z)$ ({\it confer} Lemma 3.1 in \cite{Z}).}}

This paper is {{one}} part of a systematic study of these questions~: for a positive integer $d$, let $x_1, \cdots, x_{d}\in \Q\cap [0,1)$ be \emph{pairwise distinct rational shifts}.
Consider the $r_1+r_2+\cdots +r_d$ Lerch functions~:
$\Phi_1(x_1, z)$, $\Phi_2(x_1, z), \ldots ,  \Phi_{r_1}(x_1, z)$, 
$\Phi_1(x_2, z)$, \,$\Phi_2(x_2, z), \ldots ,  \Phi_{r_2}(x_2, z)$, 
$\ldots, \Phi_1(x_d, z)$, $\Phi_2(x_d, z), \ldots   , \Phi_{r_d}(x_d, z)$. 

In the forthcoming article \cite{DHK4}, we will publish a criterion showing 
the linear independence of the values at distinct algebraic numbers
$\alpha_1, \ldots, \alpha_m$ of the different Lerch functions with these distinct shifts, namely
the linear independence of the $m(r_1+r_2+\cdots +r_d)+1$ numbers:~
$\Phi_1(x_1, \alpha_1)$, $\Phi_2(x_1, \alpha_1), \ldots ,  \Phi_{r_1}(x_1, \alpha_1)$, 
$\Phi_1(x_2, \alpha_2)$, $\Phi_2(x_2, \alpha_2), \ldots ,  \Phi_{r_2}(x_2, \alpha_2)$, 
$\ldots, \Phi_1(x_d, \alpha_m)$, \,$\Phi_2(x_d, \alpha_m), \ldots ,  \Phi_{r_d}(x_d, \alpha_m)$
and $1$. 

\medskip

\noindent
{\bf Acknowledgements}

\smallskip

We are deeply grateful to Professor Wadim Zudilin and Professor Raffaele Marcovecchio who kindly conveyed us their comments on our earlier version of the article, helping us to improve it in various aspects.

We sincerely thank the referee of the present article for significant and precise comments.
We also thank Professor Masaru Ito for his support to calculate examples using Mathematica.

This work is
partly supported by JSPS KAKENHI Grant no.~18K03225, and
also by the Research Institute for Mathematical Sciences, an international joint usage
and research center located in Kyoto University.

\section{Notations and Main results} 
We collect some notations which we use throughout this article. Let $\Q$ be the rational number field and  $K$ an algebraic number field of arbitrary degree $[K:\Q] < \infty$.
Let us denote by $\N$ the set of strictly positive integers.
We denote the set of places of $K$  by ${{\mathfrak{M}}}_K$ (by ${\mathfrak{M}}^{\infty}_K$ for infinite places, by ${{\mathfrak{M}}}^{f}_K$
for finite places, respectively).
For $v\in {{\mathfrak{M}}}_K$, we denote the completion of $K$ with respect to $v$ by $K_v$, and 
{{the completion of an algebraic closure of $K_v$ by  $\mathbb C_v$ (resp. for $v\in {\mathfrak{M}}^{\infty}_K$,
for $v\in{{\mathfrak{M}}}^{f}_K$)
}}.

For $v\in {{\mathfrak{M}}}_K$, we define the normalized absolute value $| \cdot |_v$ as follows~:
\begin{align*}
&|p|_v:=p^{-\tfrac{[K_v:\Q_p]}{[K:\Q]}} \ \text{if} \ v\in{{\mathfrak{M}}}^{f}_K \ \text{and} \ v|p\enspace,\\
&|x|_v:=|\iota_v x|^{\tfrac{[K_v:\R]}{[K:\Q]}} \ \text{if} \ v\in {{\mathfrak{M}}}^{\infty}_K\enspace,
\end{align*}
where $p$ is a prime number and $\iota_v$ the embedding $K\hookrightarrow \C$ corresponding to $v$. 
{On $K_v^n$, the norm $\Vert\cdot\Vert_v$ denotes the norm of the supremum.}
Then we have the product formula 
\begin{align*} 
\prod_{v\in {{\mathfrak{M}}}_K} |\xi|_v=1 \ \text{for} \ \xi \in K\setminus\{0\}\enspace.
\end{align*}

\

{Let $\beta\in K$, we define the absolute Weil height of  $\beta$ as 
\begin{align*}
&{{\mathrm{H}}({\beta}):=\prod_{v\in {{\mathfrak{M}}}_K} \max\{ 1,|\beta|_v\}\enspace,}
\end{align*}
Let $m$ be a positive integer and ${\boldsymbol{\beta}:=(\beta_0,{\ldots},\beta_m) \in\pu_m( K)}$.  
We define the absolute Weil height of $\boldsymbol{\beta}$ by
\begin{align*}
&{{\mathrm{H}}(\boldsymbol{\beta}):=\prod_{v\in {{\mathfrak{M}}}_K} \max\{ |\beta_0|_v,\ldots,|\beta_m|_v\}\enspace,}
\end{align*}
{{and logarithmic absolute Weil height by ${\rm{h}}(\boldsymbol{\beta}):={\rm{log}}\, \mathrm{H}(\boldsymbol{\beta})$. 
Let $v\in \mathfrak{M}_K$, then $h_v(\boldsymbol{\beta})=\log\Vert \boldsymbol{\beta}\Vert_v$ where $\Vert\cdot\Vert_v$ is the sup $v$-adic norm. Then we have ${\mathrm{h}}(\boldsymbol{\beta})={\displaystyle{\sum_{v\in \mathfrak{M}_K}}}{\mathrm{h}}_v(\boldsymbol{\beta})$ and for $\beta\in K$, $h(\beta)$ is the height of the point $(1,\beta)\in\pu_1(K)$. {Similarly, $h_{\infty}(y)=\displaystyle{\sum_{v\mid\infty}}h_v(y)$ is the archimedean part of the height and $h_f(y)=\displaystyle{\sum_{v\nmid\infty}}h_v(y)$ is the finite part of the height.}}}}

\vspace{\baselineskip}

For a finite set $S\subset \overline{\Q}$, we define the denominator of $S$ by $${\rm{den}}(S)=\min \{n\in \Z_{>0}\mid n\alpha \ \text{are algebraic integer for all} \ \alpha \in S \}\enspace.$$}}

To state our main results, we prepare further notations.
\bigskip

Let $m, r$ be positive integers and $K$ a number field. 
Let $x\in \Q\cap [0,1)$ and put $b:={\rm{den}}(x)$. Remark that $b=1$ if $x=0$.

Let $\beta\in K\setminus\{0\}$.  Let $\alpha_1,\ldots,\alpha_m\in K\setminus\{0\}$ be pairwise distinct algebraic numbers. 
We put $\boldsymbol{\alpha}:=(\alpha_1,\ldots,\alpha_m)\in (K\setminus\{0\})^m$.

For a place $v\in \mathfrak{M}_K$ and $f(z)={\displaystyle{\sum_{k=0}^{\infty}}}f_k/z^{k+1}\in {{1/z\cdot}}K[[1/z]]$, 
{recall that
$\iota_v:K \hookrightarrow K_v{{\subset \mathbb{C}_v}}$ is the $v$-adic embedding} and write $f_v(z):={\displaystyle{\sum_{k=0}^{\infty}}}\iota_v(f_k)/z^{k+1}\in {{1/z\cdot}}\mathbb{C}_v[[1/z]]$.
We then consider $f_v(z)$ as the $v$-adic analytic function inside its radius of convergence.

{{Finally, for a place $v\in \mathfrak{M}_K$, we denote
{{\begin{align*}
V_{v}(\boldsymbol{\alpha},\beta)&=\log\vert\beta\vert_{v_0}-rm{\mathrm{h}}(\boldsymbol{\alpha},\beta)-{{(rm+1)}}\log\Vert \boldsymbol{\alpha}\Vert_{v_0}{+rm\log\Vert(\boldsymbol{\alpha},\beta)\Vert_{v_0}}\\ 
&-\displaystyle\left[\const\right]-r\log\mu(x)-br^2m\enspace.
\end{align*}
}} 
We write $V(\boldsymbol{\alpha},\beta)=V_{v}(\boldsymbol{\alpha},\beta)$ when no confusion occurs.}}

Our main result is  as follows.
This is a direct consequence of more precise Theorem~\ref{Lerch 2} which will be proven in Section~\ref{analy}\,.
\begin{theorem} \label{Lerch}
{{Let $v_0$ be a place in $\mathfrak{M}_K$}}, either archimedean or non-archimedean, such that\footnote{By the assumption of $V_{v_0}(\boldsymbol{\alpha},\beta)>0$, we have $||\boldsymbol{\alpha}||_{v_0}<|\beta|_{v_0}$.} 
$$V_{v_0} (\boldsymbol{\alpha},\beta)>0\enspace.$$
{{Then the functions $\Phi_s(x,z), \,\,1\leq s \leq r$ converge  around $\alpha_j/\beta \,(1\leq j \leq m)$  in $K_{v_0}$ and \\the $rm+1$ numbers~$:$ 
$$1,\Phi_{1}(x,\alpha_1/\beta),\ldots,\Phi_{r}(x,\alpha_1/\beta),\ldots, \Phi_{1}(x,\alpha_m/\beta),\ldots,\Phi_{r}(x,\alpha_m/\beta)\enspace,$$ are linearly independent over $K$.}}
\end{theorem}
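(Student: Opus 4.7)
The plan is to follow the Padé approximation strategy outlined in the introduction: for every sufficiently large $n\in\N$, construct an explicit $K$-linear form
\[
L_n = p_{n,0} + \sum_{s=1}^{r}\sum_{j=1}^{m} p_{n,s,j}\,\Phi_s(x,\alpha_j/\beta)
\]
with coefficients of controlled height and very small $v_0$-adic size; show that $rm+1$ of these forms (for instance $L_n,L_{n+1},\ldots,L_{n+rm}$) are $K$-linearly independent via the non-vanishing of a Hermite-type determinant; and conclude by a Nesterenko-type linear independence criterion, the hypothesis $V_{v_0}(\boldsymbol{\alpha},\beta)>0$ being precisely what makes the smallness of the forms dominate the growth of their height.

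Concretely, I would construct Pad\'e approximants of the second kind simultaneously for the $rm$ formal series $\Phi_s(x,\alpha_j z)$ with $1\le s\le r$, $1\le j\le m$: for each integer $n$, a common denominator polynomial $P_n(z)\in K[z]$ of degree $\sim rmn$ together with numerators $Q_{n,s,j}(z)\in K[z]$ satisfying
\[
P_n(z)\,\Phi_s(x,\alpha_j z)-Q_{n,s,j}(z)= z^{(rm+1)n+O(1)}\,R_{n,s,j}(z),
\]
where each $R_{n,s,j}$ is analytic near $0$. Since $x$ is rational with denominator $b$, a closed formal construction in the spirit of Rhin--Toffin, enriched to accommodate the shift $x$, yields explicit formulae from which one reads off a global denominator of the shape $\mathrm{lcm}(1,\ldots,N)^r\cdot b^{rN}$ with $N\sim n$ together with archimedean coefficient size $O(C^n)$. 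Specializing at $z=1/\beta$---a point in the $v_0$-disc of convergence of each $\Phi_s(x,\cdot)$ thanks to $\Vert\boldsymbol{\alpha}\Vert_{v_0}<\vert\beta\vert_{v_0}$---and multiplying by a power of $\beta$ to clear denominators produces the $L_n$ with the required properties: the global height of its coefficient vector is, to leading order, proportional to $rm\,\mathrm{h}(\beta)+r(m+1)\sum_i\mathrm{h}(\alpha_i)+4br^2m$ (the prime number theorem converts the $\mathrm{lcm}$ into the $4br^2m$ contribution), while $\vert L_n\vert_{v_0}$ is of order $\vert\beta\vert_{v_0}^{-(rm+1)n(1+o(1))}$.

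The crux of the argument---and the step for which, as the paper itself stresses, no systematic recipe is available---is proving the non-vanishing of the associated Hermite-type determinant: one must exhibit $rm+1$ indices $n_1<\cdots<n_{rm+1}$ for which the $(rm+1)\times(rm+1)$ matrix of coefficients of $L_{n_1},\ldots,L_{n_{rm+1}}$ is invertible. Viewing this determinant as a polynomial in the parameters $\alpha_1,\ldots,\alpha_m$ and $x$, the explicit formal nature of the construction makes a factorization available; after full specialization one is reduced, as announced in the introduction, to a purely numerical determinant---essentially a Gram-type determinant of integrals of powers of $\log$ on a bounded real interval---whose non-vanishing can be verified by direct computation.

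With the four ingredients in hand (coefficients in $K$, controlled heights, $v_0$-smallness of the linear forms, and invertible coefficient matrix), the standard Nesterenko-type linear independence criterion applies: any hypothetical non-trivial $K$-linear relation among $1,\Phi_1(x,\alpha_1/\beta),\ldots,\Phi_r(x,\alpha_m/\beta)$ would contradict the comparison between $-\log\vert L_n\vert_{v_0}$ and the global height of the coefficient vector, precisely when
\[
(rm+1)\log\vert\beta\vert_{v_0} > rm\,\mathrm{h}(\beta)+r(m+1)\sum_{i=1}^{m}\mathrm{h}(\alpha_i)+4br^2m,
\]
that is, under the hypothesis $V_{v_0}(\boldsymbol{\alpha},\beta)>0$. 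The third step above is clearly the main obstacle, and it is the reason why an \emph{explicit} Pad\'e construction is essential rather than one by Siegel's lemma: only the explicit formal shape of the approximants makes the Hermite-type determinant amenable to factorization and to eventual numerical evaluation.
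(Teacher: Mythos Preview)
Your outline captures the paper's strategy faithfully at the macroscopic level: explicit Pad\'e approximants of the second kind, height/size estimates, non-vanishing of a Hermite-type determinant, and a Nesterenko-type criterion. There is, however, one structural point where your plan diverges from what the paper actually does, and it is precisely at the step you yourself flag as the crux.

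You propose to obtain the $rm+1$ independent linear forms by taking \emph{consecutive approximation orders} $L_n,L_{n+1},\ldots,L_{n+rm}$, i.e.\ one Pad\'e system per $n$. The paper does not do this. Instead, for a \emph{fixed} $n$ it builds $rm+1$ approximants indexed by an auxiliary parameter $0\le l\le rm$, via
\[
P_{n,l}(z)=\ev_z\circ S_n^{(r)}\!\left(t^{\,l}\prod_{i=1}^m(t-\alpha_i)^{rn}\right),
\]
so that the $P_{n,l}$ have \emph{consecutive degrees} $rmn,\,rmn+1,\ldots,\,rm(n+1)$. This ``$t^l$-shift'' device (going back to Rhin--Toffin for logarithms) is what makes the Hermite determinant tractable: a valuation argument in $1/z$ shows the determinant is a constant in $z$, equal to a single leading coefficient times a minor in the remainders; the minor is then rewritten as $\psi(P_n)$ for an explicit polynomial in split variables $t_{i,s}$, and it is this formal expression that one can factor in the $\alpha_i$ and reduce to a numerical integral. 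With your consecutive-$n$ scheme the determinant would mix several approximation orders, and neither the valuation argument (degrees jump by $rm$, not by $1$) nor the $\psi$-factorization survives as written; you would need a genuinely different non-vanishing proof.

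Two smaller remarks. First, the constant $4br^2m$ in $V_{v_0}$ is not literally what the estimates give; the paper obtains a sharper constant $r(m\log 2+\log(erm)+1/(2rm)+\log\mu(x))+br^2m$ and then majorizes it by $4br^2m$ for the statement of Theorem~\ref{Lerch}. Second, your height estimate ``$r(m+1)\sum_i\mathrm{h}(\alpha_i)$'' is the crude form; the paper tracks $r\sum_i\mathrm{h}(\alpha_i)$ and $(rm+1)\log\Vert\boldsymbol{\alpha}\Vert_{v_0}$ separately, which is what produces the exact shape of $V_{v_0}$.
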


\begin{remark}
\noindent (i) The theorem is valid for a number field $K$ of finite degree over $\Q$ without special assumption, however, we should note that the condition $V(\boldsymbol{\alpha},\beta)>0$ is heavily dependent on the field $K$, {{although the heights are absolute ones and invariant under base change. 
The main part giving positive contribution in $V(\boldsymbol{\alpha},\beta) $ unfortunately tends to zero (fixing $\beta$ and varying the field), whenever the degree of $K$ goes to infinity,  because of our normalization of absolute values. 
Hence the assumption becomes more restrictive when $K$ is larger.}} We may thus understand that $K=\Q(\alpha_1, \cdots, \alpha_m)$.
We remark also that the quantities  $||\cdot ||_{v_0}$ and $  |\cdot|=|\cdot|_{v_0}$ depend on the degree
$[K:\Q]$.

\noindent (ii) When  $x\in \overline{\Q}\setminus \Q$,
let us mention that $\Phi_r(x,\alpha z)$ is not a $G$-function in the sense of Siegel (cf. \cite{Siegel}).
Let $\alpha\in \overline{\Q}\setminus\{0\}$ and $r\in \N$.
First, for $\alpha_1,\ldots,\alpha_{r+1},\beta_1,\ldots,\beta_r\in \overline{\Q}\setminus \Z_{<0}$, define the generalized hypergeometric function by
$${}_{r+1}F_r(\alpha_1,\ldots,\alpha_{r+1},\beta_1,\ldots,\beta_r;z):=\sum_{k=0}^{\infty}\dfrac{(\alpha_1)_k\ldots (\alpha_{r+1})_k}{(\beta_1)_k\ldots(\beta_r)_kk!}z^k\enspace.$$
Then $\Phi_r(x,z)$ can be represented by
$$\Phi_r(x,z)=\dfrac{z}{(x+1)^r}\cdot {}_{r+1}F_r(x+1,\ldots,x+1, 1~;x+2,\ldots,x+2;z)\enspace.$$ 
Using the characterization of non-polynomial hypergeometric $G$-functions obtained by Galochkin \cite[page $6$]{G3} (see also \cite{R}), we {deduce that} the generalized hypergeometric function ${}_{r+1}F_r(x+1,\ldots,x+1, 1~;x+2,\ldots,x+2;z)$ is not a $G$-function.

{\noindent (iii) The present result is slightly better in general {(but not always)} than the one we announced {without proof} in \cite{DHK2} as far as the quantitative aspects are concerned: analytic estimates are better when $r,m$ are large and the dependence in the height of $\boldsymbol{\alpha},\beta$ is also better. We shall quote the precise statement of \cite{DHK2} and compare it to what is obtained here in Section~\ref{analy} after stating Theorem~\ref{Lerch 2}.}
  
\end{remark}

\begin{corollary} \label{Cor4} 
Let $r,m,d$ be positive integers and $\boldsymbol{\alpha}:=(\alpha_1,\ldots,\alpha_m)$ an $m$-tuple of non-zero pairwise distinct rational numbers.
Let $(\beta_M)_{M\in \N}$ be a family of algebraic numbers with $[\Q(\beta_M):\Q]=d$, $v_{0,M}$ be an archimedean place of $\Q(\beta_M)$. We assume 
there exist positive real numbers $C_1,C_2$ with
\begin{align*}
&({\rm{i}}) \ |\beta_M|_{v_{0,M}}\to \infty \ (M\to \infty)\enspace,\\
&({\rm{ii}}) \ |\beta_M|_{v} \le C_1  \ \text{for} \ M\in \N, v\in {{\mathfrak{M}}}^{\infty}_{\Q(\beta_M)} \ \text{and} \ v\neq v_{0,M}\enspace,\\
&({\rm{iii}}) \ {\rm{den}}(\beta_M)\le C_2 \ \text{for} \ M\in \N\enspace.
\end{align*}
Then there exists a sufficiently large positive integer  $M_0:=M_0(d,r,m,\boldsymbol{\alpha},C_1,C_2)$ depending on $d,r,m$, $C_1,C_2$ and $\boldsymbol{\alpha}$ such that, for any $M\ge M_0$,
the $rm+1$ numbers~$:$
{{$$1,\, {\rm{Li}}_{1}(\alpha_1/\beta_M),\ldots, {\rm{Li}}_{r}(\alpha_1/\beta_M),\ldots, {\rm{Li}}_{1}(\alpha_m/\beta_M),\ldots, {\rm{Li}}_{r}(\alpha_m/\beta_M)\enspace,$$}}
are linearly independent over the degree $d$ algebraic number field $\Q(\beta_M)$. 
\end{corollary}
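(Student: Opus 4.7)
The plan is to reduce Corollary~\ref{Cor4} to Theorem~\ref{Lerch} applied with base field $K:=\Q(\beta_M)$, shift $x=0$ (hence $b=1$), and distinguished place $v_0:=v_{0,M}$. Two hypotheses must then be verified: the metric inequality $\Vert\boldsymbol{\alpha}\Vert_{v_{0,M}}<|\beta_M|_{v_{0,M}}$ and the positivity $V_{v_{0,M}}(\boldsymbol{\alpha},\beta_M)>0$. The metric inequality is immediate for $M$ large, since the rationals $\alpha_j$ are fixed and the normalization exponent $[K_{v_{0,M}}:\R]/d\in[0,1]$ keeps $\Vert\boldsymbol{\alpha}\Vert_{v_{0,M}}$ bounded, whereas $|\beta_M|_{v_{0,M}}\to\infty$ by (i).

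The core of the argument is the positivity of $V_{v_{0,M}}(\boldsymbol{\alpha},\beta_M)$ for $M$ sufficiently large. Rewriting,
$$
V_{v_{0,M}}(\boldsymbol{\alpha},\beta_M) = \log|\beta_M|_{v_{0,M}} - rm\bigl({\mathrm{h}}(\beta_M)-\log|\beta_M|_{v_{0,M}}\bigr) - r(m+1)\sum_{i=1}^m {\mathrm{h}}(\alpha_i) - 4r^2m\enspace.
$$
Since the ${\mathrm{h}}(\alpha_i)$'s are fixed constants, positivity reduces to showing that $\Delta_M:={\mathrm{h}}(\beta_M)-\log|\beta_M|_{v_{0,M}}$ is bounded uniformly in $M$. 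Decomposing ${\mathrm{h}}(\beta_M)=\sum_v {\mathrm{h}}_v(\beta_M)$ and isolating the $v_{0,M}$-contribution (which equals $\log|\beta_M|_{v_{0,M}}$ for $M$ large), the other archimedean local heights are each bounded by $\log^+ C_1$ using (ii), so they sum to at most $(d-1)\log^+ C_1$.

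The remaining finite-place contribution is controlled via (iii) and the product formula: setting $n_M:=\mathrm{den}(\beta_M)\le C_2$, the element $n_M\beta_M$ is an algebraic integer, hence $|\beta_M|_v\le|n_M|_v^{-1}$ for every $v\in\mathfrak{M}_K^f$, whence
$$
\sum_{v\in\mathfrak{M}_K^f}{\mathrm{h}}_v(\beta_M) \le -\sum_{v\in\mathfrak{M}_K^f}\log|n_M|_v = \sum_{v\in\mathfrak{M}_K^{\infty}}\log|n_M|_v = \log n_M \le \log C_2\enspace,
$$
the first equality by the product formula applied to $n_M\in\Z_{>0}$ and the second by direct computation with the normalized archimedean absolute values of a rational integer. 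Combining, $\Delta_M\le (d-1)\log^+ C_1+\log C_2$, which depends only on $d,C_1,C_2$, so $V_{v_{0,M}}(\boldsymbol{\alpha},\beta_M)\to+\infty$ as $M\to\infty$; this produces the threshold $M_0=M_0(d,r,m,\boldsymbol{\alpha},C_1,C_2)$ beyond which Theorem~\ref{Lerch} applies and yields the claimed linear independence. The main, though routine, obstacle is precisely this uniform height estimate: without both hypotheses (ii) and (iii), ${\mathrm{h}}(\beta_M)$ could grow as fast as or faster than $\log|\beta_M|_{v_{0,M}}$, and the leading positive term of $V_{v_{0,M}}$ would be swamped by the $rm\,{\mathrm{h}}(\beta_M)$ contribution.
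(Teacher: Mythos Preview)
Your proof is correct and follows essentially the same route as the paper's: both reduce to Theorem~\ref{Lerch} with $x=0$ and verify $V_{v_{0,M}}(\boldsymbol{\alpha},\beta_M)>0$ for large $M$ by bounding ${\mathrm{h}}(\beta_M)-\log|\beta_M|_{v_{0,M}}$ uniformly via hypotheses (ii) and (iii). You are slightly more explicit than the paper in two places---you verify the metric condition $\Vert\boldsymbol{\alpha}\Vert_{v_{0,M}}<|\beta_M|_{v_{0,M}}$ and you justify the finite-place bound $\sum_{v\nmid\infty}{\mathrm{h}}_v(\beta_M)\le\log C_2$ through the product formula, whereas the paper simply asserts ${\mathrm{h}}(\beta_M)\le{\mathrm{h}}_{v_{0,M}}(\beta_M)+\sum_{v\mid\infty,\,v\neq v_{0,M}}\log C_1+\log C_2$---but the underlying argument is the same.
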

\begin{proof}
Let $V(\boldsymbol{\alpha},\beta_M)$ be the real number defined in Theorem $\ref{Lerch}$ for $x=0$. 
Then we have
{{\begin{align*}
V(\boldsymbol{\alpha},\beta_M)&=\log\vert\beta_M\vert_{v_{0,M}}-rm{\mathrm{h}}(\boldsymbol{\alpha},\beta_M)-{{(rm+1)}}\log\Vert \boldsymbol{\alpha}\Vert_{v_0}+rm\log\Vert(\boldsymbol{\alpha},\beta_M)\Vert_{v_0}\\ 
&-\displaystyle\left[\const\right]-{r^2m}\enspace.
\end{align*}
}}
By Theorem $\ref{Lerch}$, we are reduced to proving the  existence of  a sufficiently large positive integer $M_0=M_0(d,r,m,\boldsymbol{\alpha},C_1,C_2)$ depending on $d,r,m$ and $\boldsymbol{\alpha}$ such that $V(\boldsymbol{\alpha},\beta_M)>0$ for any $M\ge M_0$. 
Using $(i)$, $(ii)$ and $(iii)$, we have 
{{
\begin{align*}
-{\mathrm{h}}(\boldsymbol{\alpha},\beta_M)+\log||(\boldsymbol{\alpha},\beta_M)||_{v_0}\ge -\sum_{\substack{v|\infty \\ v\neq v_{0,M}}}\max({\rm{h}}_{v}(\boldsymbol{\alpha}),{\rm{log}}(C_1))-
\max(\log{\rm{den}}(\boldsymbol{\alpha}),{\rm{log}}(C_2))\enspace,
\end{align*} 
}}
for sufficiently large $M$ and thus
{{{\small{\begin{align*}
V(\boldsymbol{\alpha},\beta_M)&\ge{\rm{log}}|\beta_M|_{v_{0,M}}-\sum_{\substack{v|\infty \\ v\neq v_{0,M}}}\max({\rm{h}}_{v}(\boldsymbol{\alpha}),{\rm{log}}(C_1))-
\max(\log{\rm{den}}(\boldsymbol{\alpha}),{\rm{log}}(C_2))\\
&-{{(rm+1)}}\log\Vert \boldsymbol{\alpha}\Vert_{v_0}-\displaystyle\left[\const\right]-{r^2m}\enspace.
\end{align*}}}}}
By the above inequality, using the assumption $(i)$, we obtain $V(\boldsymbol{\alpha},\beta_M)>0$ for any sufficiently large positive integer  $M$. 
Thus the assertion of Corollary $\ref{Cor4}$ is verified.
\end{proof}
 
\begin{example} \label{ex1}
Let $d$ be a positive integer  and $f(X):=X^d+c_{d-1}X^{d-1}+{{\cdots} }+c_0\in \Q[X]$ with $c_0\neq 0$.
Put $c:={\rm{den}}(c_0,\ldots,c_{d-1})$. We denote the roots of $f(X)$ by $\gamma_1,\ldots,\gamma_d\in \C$. 
Let $p$ be a prime number which is coprime to $c$. For a positive integer  $M$ which is coprime to $p$, we define a polynomial $f^{(p)}_{M}(X)$ by
$$f^{(p)}_M(X):=\left(p+\dfrac{1}{M}\right)X^{d+1}+pc_{d-1}X^{d}+{\cdots} +pc_0X+\dfrac{p}{M}\enspace.$$
Notice that, by the Eisenstein irreducibility criterion of polynomials, the each polynomial $f^{(p)}_{M}(X)$ is irreducible over $\Q[X]$.
By the theorem of Ostrowski  proving  the continuity of the complex roots of a polynomial with respect to the coefficients, there exists a complex root of $f^{(p)}_{M}(X)$ (see \cite[Theorem $1.4$]{M} or \cite[Theorem $6.2$]{C-M}), say $1/\beta_M$, with
$|1/\beta_M|$ is sufficiently close to $0$ and for each conjugate $1/\beta^{(g)}_M$ of $1/\beta_M$, there exists some $1\le i \le d$ with $|1/\beta^{(g)}_M|$ is sufficiently close to $\gamma_i$ if $M$ is sufficiently large.
Especially, there exists an archimedean place $v_{0,M}$ of $\Q(\beta_M)$ with 
\begin{align*}
&|\beta_M|_{v_{0,M}}\to \infty \ (M\to \infty)\enspace,\\
&|\beta_M|_v\le C_1 \ \text{for} \ v\in {{\mathfrak{M}}}^{\infty}_{\Q(\beta_M)} \ \text{with} \ v\neq v_{0,M}, \ M\in \N \ \text{with} \  (M,p)=1\enspace,
\end{align*}
for some $C_1>0$.
The number $cp\cdot \beta_M$ is algebraic integer for any $M$, especially, we have $${\rm{den}}(\beta_M)\le cp\enspace.$$ 
The family of algebraic numbers $(\beta_M)_{\substack{M\in\N \\(M,p)=1}}$ satisfies the assumptions of Corollary $\ref{Cor4}$. Thus there exists a positive integer  $M_0:=M_0(d,r,m,\boldsymbol{\alpha},C_1,cp)$ which depends on $d,r,m,\boldsymbol{\alpha}$, $C_1$ and $cp$ satisfying, for any $M\ge M_0$, the $rm+1$ numbers~$:$
$$1,{\rm{Li}}_{1}(\alpha_1/\beta_M),\ldots, {\rm{Li}}_{r}(\alpha_1/\beta_M),\ldots, {\rm{Li}}_{1}(\alpha_m/\beta_M),\ldots, {\rm{Li}}_{r}(\alpha_m/\beta_M)\enspace,$$
are linearly independent over $\Q(\beta_M)$. 
\end{example}

\vspace{\baselineskip}
The present paper is organized as follows: firstly, in Section~\ref{construcpade}, we construct the Pad\'e approximants needed for the proof: this is Theorem~\ref{Pade appro Lerch}. In Section~\ref{determinant}, we prove that the determinant is non-vanishing. The main result is Proposition~\ref{decompose Cnum}. First part is devoted to some preparation, which involve valuation estimates and elementary linear algebra. Second part is devoted to the explicit factorization of the determinant of Hermite  type
and makes full use of the formal construction of the Pad\'e approximants. Non-vanishing determinant is then reduced to the non-vanishing of an absolute constant which can be shown by proving some explicit real integral does not vanish (Lemma~\ref{last lemma}).  We then move to analytic estimates in Section~\ref{analy}. The height estimate of the approximants is provided for in Lemma~\ref{majonorme} and the size of the approximation bounded in Lemma~\ref{upper jyouyonew}; thanks to the formal construction of Section~\ref{construcpade}, simple linear algebra estimates are sufficient and we avoid tricky contour integral estimates standard in the theory.  We then proceed to prove a linear independence criterion (Proposition~\ref{critere version II}) and deduce our main result. We conclude by providing a few 
concrete examples of Corollary $\ref{Cor4}$ and Example $\ref{ex1}$ in Section $6$.

\section{Simultaneous Pad\'{e} approximants of Lerch functions}
\label{construcpade}
In this section, we explicitly construct Pad\'{e} approximants of Lerch functions.
First we recall the definition of Pad\'{e} approximants of formal Laurent series. In the following of this section, we denote by $L$ 
an integral domain of characteristic $0$. {{Denote the quotient field of $L$ by $Q(L)$.}}
We define the order function at $z=\infty$, ${\rm{ord}}_{\infty}$;
\begin{align*}
{\rm{ord}}_{\infty}:{{Q(L)((1/z))}}\rightarrow \Z\cup \{\infty\} \, \,\,\,{\text{by}}\,\,\ \ \sum_{k}{\dfrac{a_k}{z^k}}\mapsto \min\{k\in \Z\mid a_k\neq 0\}\enspace.
\end{align*}  
We first recall without proof the following elementary fact~:
\begin{lemma} \label{pade}
Let $r$ be a positive integer, $f_1(z),\ldots,f_r(z)\in 1/zL[[1/z]]$ and $\boldsymbol{n}:=(n_1,\ldots,n_r)\in \N^{r}$.
Put $N:=\sum_{i=1}^rn_i$.
Let $M$ be a positive integer  with $M\ge N$. Then there exists a family of polynomials 
$(P_0(z),P_{1}(z),\ldots,P_r(z))\in L[z]^{r+1}\setminus\{\bold{0}\}$ satisfying the following conditions~$:$
\begin{align*} 
&({\rm{i}}) \ {\rm{deg}}P_{0}(z)\le M\enspace,\\
&({\rm{ii}}) \ {\rm{ord}}_{\infty} (P_{0}(z)f_j(z)-P_j(z))\ge n_j+1 \ \text{for} \ 1\le j \le r\enspace.
\end{align*}
\end{lemma}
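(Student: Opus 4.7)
The plan is a straightforward linear-algebra / Dirichlet box argument, in the style of Padé's original construction. Write $f_j(z) = \sum_{l\geq 1} a_{j,l}\,z^{-l}$ with $a_{j,l}\in L$, and let $P_0(z)=\sum_{k=0}^{M} c_k z^k$ with $c_0,\ldots,c_M$ viewed as unknowns in $L$. Once $P_0$ is chosen, the natural candidate for $P_j$ is the polynomial part of $P_0(z)f_j(z)$: since $P_0\in L[z]$ and $f_j\in (1/z)L[[1/z]]$, the Cauchy product $P_0 f_j$ lies in $L((1/z))$, and its polynomial part is automatically in $L[z]$. With this choice, the difference $P_0 f_j - P_j$ lies in $(1/z)L[[1/z]]$ by construction, so condition (ii) is equivalent to the vanishing of the coefficients of $z^{-1}, z^{-2},\ldots, z^{-n_j}$ in $P_0(z)f_j(z)$.

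The coefficient of $z^{-m}$ in $P_0(z)f_j(z)$ is the $L$-linear form $\sum_{k=0}^{M} a_{j,k+m}\,c_k$. Thus the full set of conditions in (ii), for $j=1,\ldots,r$ and $1\le m\le n_j$, constitutes a homogeneous system of exactly $N = \sum_{j=1}^{r} n_j$ linear equations in the $M+1$ unknowns $c_0,\ldots,c_M$, with coefficients in $L$. Since $M+1 \geq N+1 > N$, viewing the system over the fraction field $Q(L)$ gives a kernel of positive dimension; hence there is a nonzero solution $(c_0,\ldots,c_M)\in Q(L)^{M+1}$. Clearing denominators by multiplying through by a common denominator in $L$ (available since $L$ is an integral domain), we may assume $(c_0,\ldots,c_M)\in L^{M+1}\setminus\{\mathbf 0\}$. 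The resulting $P_0\in L[z]$ has $\deg P_0\leq M$, is nonzero, and together with the $P_j$ defined as the polynomial parts of $P_0 f_j$ yields the desired tuple in $L[z]^{r+1}\setminus\{\mathbf 0\}$.

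There is essentially no serious obstacle here; this is purely a counting of unknowns against linear conditions, and the Padé approximants of the second kind always exist for formal dimensional reasons. The only points requiring minor care are (a) passing from $Q(L)$ back to $L$ by clearing denominators, which is legitimate precisely because $L$ is an integral domain, and (b) observing that once $P_0$ is fixed in $L[z]$, the auxiliary polynomials $P_j$ automatically have coefficients in $L$, so no further clearing is needed. Both are immediate, and the whole argument is self-contained without appealing to Siegel's lemma or any height estimate, which is appropriate since the lemma makes no quantitative claim on the $P_j$.
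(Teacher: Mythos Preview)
Your argument is correct and is exactly the standard linear-algebra count for Pad\'e approximants of the second kind. The paper itself states this lemma \emph{without proof} (``We first recall without proof the following elementary fact''), so your write-up is a legitimate fill-in; the approach you take---set up the unknown coefficients of $P_0$, impose the vanishing of the first $n_j$ negative-power coefficients of $P_0f_j$ as a homogeneous linear system over $Q(L)$, and clear denominators---is precisely what the authors have in mind as the ``elementary'' justification.
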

\begin{definition}
We use the same notations as in Lemma $\ref{pade}$. 
We say that a family of polynomials $(P_0(z),P_{1}(z),\ldots,P_r(z)) \in L[z]^{r+1}$ satisfying the properties $(i)$ and $(ii)$ {is a} weight $\boldsymbol{n}$ and degree $M$ Pad\'{e} type approximant of $(f_1,\ldots,f_r)$.
For such $(P_0(z),P_{1}(z),\ldots,P_r(z))$, of $(f_1,\ldots,f_r)$, 
we call the family of formal Laurent series $(P_{0}(z)f_j(z)-P_{j}(z))_{1\le j \le r}$ as weight $\boldsymbol{n}$ {and} degree $M$ Pad\'{e} type approximations of $(f_1,\ldots,f_r)$.
\end{definition}

In the following, we fix $x\in L$ and assume $x+k$ are invertible in $L$ for any $k\in \N$.
We now introduce notations for formal primitive, derivation, and evaluation maps.

\begin{notation} \label{notationderiprim}
\begin{itemize}
\item[(i)]  For $\alpha\in L$, We denote by ${\ev}_{\alpha}$ the linear evaluation map $L[t]\longrightarrow L$, $P\longmapsto P(\alpha)$. At a later stage, when several variables are in play and there is a perceived ambiguity on which variable is being specialized, we shall denote the map $\ev_{t\rightarrow \alpha}$.
\item[(ii)] For $P\in L[t]$, we denote by $[P]$ the multiplication by $P$ ($Q\longmapsto PQ$).
\item[(iii)] We also denote by $\inte=\inte_x$ the linear operator $L[t]\longrightarrow L[t]$, defined by $P\longmapsto \frac{1}{t^{1+x}}\kern-1,7pt\int_{0}^{t}{\xi}^xP(\xi)d\xi$ (formal primitive). 
\item[(iv)] We denote by $\deri=\deri_x$ the derivative map $P\longmapsto t^{-x}\tfrac{d}{dt}(t^{x+1}P(t))$, $S_0={\rm{Id}}$, and for $n\geq 1$, by $S_n=S_{n,x}$ the map taking $t^k$ to $\tfrac{(k+x+1)_n}{n!}t^k$ where 
$(k+x+1)_n$ {is the Pochammer symbol }$(k+x+1)_n:=(k+x+1)\ldots(k+x+{n})$ 
(the divided derivative $P\longmapsto \frac{1}{n!}t^{-x}\frac{d^n}{dt^n}(t^{n+x}P)=\tfrac{1}{n!}\left(\tfrac{d}{dt}+x/t\right)^nt^n(P)$), so that $\deri=S_{1}$.
\item[(v)] If $\varphi$ is an $L$-automorphism of an $L$-module $M$ and $k$ an integer, we denote 
$$\varphi^{(k)}:=\begin{cases}
\overbrace{\varphi\circ\cdots\circ\varphi}^{k-\text{times}} & \ \text{if} \ k>0\enspace,\\
{\rm{id}}_M &  \ \text{if} \ k=0\enspace,\\
\overbrace{\varphi^{-1}\circ\cdots\circ\varphi^{-1}}^{-k-\text{times}} & \ \text{if} \ k<0\enspace.
\end{cases}
$$
\end{itemize}
\end{notation}
Finally, for any given { $s\in \Z$}, we introduce, the linear map { $\varphi_s=\varphi_{\alpha,x,s}$}.
\begin{notation} 
{$$\varphi_s=\varphi_{\alpha,x,s}=[\alpha]\circ\ev_{\alpha}\circ \inte^{(s)}_x\enspace.$$ }
\end{notation}
Note that, for any positive integer $s$, any $f\in L[[t]]$, $\varphi_{\alpha,x,s}(f)$ is a formal analogue of 
\begin{equation}
\label{interprim}\dfrac{1}{(s-1)!}\int^{\alpha}_{0}t^{x} f(t){\rm{log}}^{s-1}\dfrac{1}{t}dt\enspace.
\end{equation}
We record for future use and convenience the following elementary facts~:
\begin{facts}
\label{faitselem}
\begin{itemize} 
\item[$(i)$] The map $\inte$ is an isomorphism and its inverse is $\deri$, hence $\varphi_{s}$ is well defined for $\alpha \in L$ and $s\leq -1$.
\item[$(ii)$] For any integers $n_1,n_2\geq 0$, the divided derivatives $S_{n_1}$ and $S_{n_2}$ commute, $S_{n_1}\circ S_{n_2}=S_{n_2}\circ S_{n_1}$.
\item[$(iii)$]  For any integer $s\in\zu$, any $\alpha\in L$, $\varphi_{s}\circ\deri=\varphi_{s-1}$ (recall that $\varphi_s$ depends on $\alpha$).
\item[$(iv)$] By continuity, all the above mentioned maps extend to $L[[t]]$ for the natural valuation.
\item[$(v)$] The kernel of the map $\varphi_{0}$ is the ideal $(t-\alpha)$.
\end{itemize}
\end{facts}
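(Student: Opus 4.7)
My plan rests on the fact that $\inte$, $\deri$, and each $S_n$ act \emph{diagonally} on the monomial basis $(t^k)_{k\geq 0}$. A direct application of the definitions yields
$$
\inte(t^k)=\frac{t^k}{k+x+1},\quad \deri(t^k)=(k+x+1)\,t^k,\quad S_n(t^k)=\frac{(k+x+1)_n}{n!}\,t^k.
$$
Since each factor $k+x+1$ is invertible in $L$ by hypothesis, the whole proposition collapses to scalar arithmetic together with the observation that diagonal operators on a fixed basis commute.

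For (i), the first two formul\ae{} give $\inte\circ\deri=\deri\circ\inte=\id$ on every $t^k$, hence on $L[t]$; consequently $\inte^{(s)}$ makes sense for every $s\in\Z$ (as iterates of $\deri$ when $s<0$), and $\varphi_s=[\alpha]\circ\ev_\alpha\circ\inte^{(s)}$ is well defined. For (ii), the scalar coefficients of the diagonal operators $S_{n_1}$ and $S_{n_2}$ multiply in the commutative ring $L$, so they commute on each $t^k$, hence everywhere. For (iii), I will compute
$$
\varphi_s\circ\deri=[\alpha]\circ\ev_\alpha\circ\inte^{(s)}\circ\deri=[\alpha]\circ\ev_\alpha\circ\inte^{(s-1)}=\varphi_{s-1},
$$
where the middle equality uses $\inte^{(s)}\circ\deri=\inte^{(s)}\circ\inte^{(-1)}=\inte^{(s-1)}$, valid for every $s\in\Z$ by the group law on iterates granted by (i).

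For (iv), each of $\inte$, $\deri$, $S_n$ and the multiplications $[P]$ either preserves or raises the $(t)$-adic order of a monomial, so each is continuous for the $(t)$-adic topology on $L[t]$ and extends uniquely to $L[[t]]$; the evaluation $\ev_\alpha$ is not continuous for this topology, but it enters $\varphi_s$ only after $\inte^{(s)}$ is applied, and its analytic meaning requires only that $\alpha$ lie in a suitable disk of convergence, which is guaranteed in the applications. Finally, for (v), $\varphi_0(P)=\alpha\,P(\alpha)$, so since $L$ is a domain and $\alpha\neq 0$ in our context, the kernel is $\{P\in L[t]:P(\alpha)=0\}=(t-\alpha)L[t]$ by polynomial long division by the monic divisor $t-\alpha$, and extends to $(t-\alpha)L[[t]]$ after the continuous extension. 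The whole argument is essentially bookkeeping; the only mild care needed is to check that the two descriptions of $\inte^{(s)}$, as iterated $\inte$ for $s>0$ and iterated $\deri$ for $s<0$, glue consistently, which is exactly the content of (i).
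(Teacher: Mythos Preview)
Your argument is correct and is precisely the intended one: the paper records these as ``elementary facts'' and gives no proof, so there is nothing to compare against beyond confirming that the diagonal action of $\inte$, $\deri$, $S_n$ on the basis $(t^k)_{k\ge 0}$ (together with the standing hypothesis that every $k+x+1$ is a unit in $L$) immediately yields (i)--(iii). Your treatment of (v) is also fine, and you are right to flag that $\alpha\neq 0$ is tacitly assumed.

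One small caution on (iv): you correctly observe that $\ev_\alpha$ (and hence $\varphi_s$) is \emph{not} continuous for the $(t)$-adic topology, but your resolution via ``a suitable disk of convergence'' is not quite the mechanism the paper uses. The extensions that matter in the sequel are purely formal: when the paper applies $\varphi_s$ to $\frac{1}{z-t}=\sum_{k\ge 0}t^k/z^{k+1}\in L[[t]]$ with $z$ a formal variable inside $L$, the output $\sum_{k\ge 0}\alpha^{k+1}/((k+x+1)^s z^{k+1})$ lands in $K[[1/z]]$, not in $L$ itself. So the ``continuity'' in (iv) should be read as: $\inte,\deri,S_n,[P]$ extend to $L[[t]]$ by $(t)$-adic continuity, while $\varphi_s$ extends termwise to those power series whose image series converges in an appropriate completed target (here the $(1/z)$-adic one). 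This is a cosmetic point---the paper is equally informal---but your phrase about analytic convergence of $\alpha$ slightly misidentifies where the convergence actually takes place. Likewise, your final clause in (v) about extending the kernel description to $(t-\alpha)L[[t]]$ should be dropped, since $\varphi_0$ does not extend continuously to all of $L[[t]]$.
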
 
Using fact~\ref{faitselem}, (iv) above, and assuming the variable $z\in L$, one recovers the classical Lerch function expressed as a natural image by $\varphi_{s}$, for $s\geq 1$~:
\begin{align} \label{Lerch integral rep}
\varphi_{s}\left(\dfrac{1}{z-t}\right)=\Phi_s(x,\alpha/z)\enspace.
\end{align}
\begin{lemma}
\label{key1}
Let $n$ be a positive integer  and $k$ a non-negative integer, one has the following relations valid in ${\rm{End}}_L(L[[t]])$~$:$
\begin{itemize}
\item[$(i)$] $S_{n}=\dfrac{1}{n!}S_{1}\circ(S_{1}+\id)\circ\cdots\circ(S_{1}+(n-1)\id)\enspace \mbox{and} \enspace [t^k]\circ  S_{1}=(S_{1}-k\id)\circ[t^k]\enspace.$
\item[$(ii)$] There exist rational numbers $\{b_{n,m,l}\} \subset \Q$
with $b_{n,m,0}={\frac{(-m)_n}{n!}}$ and, for every $n,m\geq 0$,
$$
[t^m]\circ S_{n}=\sum_{l=0}^nb_{n,m,l}S^{(l)}_{1}\circ[t^m]\enspace.
$$

\end{itemize}
\end{lemma}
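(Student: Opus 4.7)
The plan is to verify both parts by direct computation on the topological basis $\{t^k\}_{k\geq 0}$ of $L[[t]]$; by linearity and continuity all identities extend from this basis to the whole of $L[[t]]$.

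For part $({\rm{i}})$, the key observation is that $S_1$ acts diagonally on each monomial~: from the definition of $S_1=\deri$ recalled in Notation~\ref{notationderiprim}, one computes $S_1(t^k)=(k+x+1)t^k$. Hence for every integer $j$, the operator $S_1+j\,\id$ acts on $t^k$ as multiplication by $k+x+1+j$. Composing these operators for $j=0,1,\ldots,n-1$ and dividing by $n!$ produces the scalar $(k+x+1)_n/n!$, which is precisely $S_n(t^k)$; this establishes the factorization of $S_n$. For the commutation rule $[t^k]\circ S_1=(S_1-k\,\id)\circ[t^k]$, both sides applied to $t^m$ yield $(m+x+1)t^{m+k}$, so the identity holds on each monomial.

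For part $({\rm{ii}})$, I would use part $({\rm{i}})$ twice~: first to write $S_n$ as a polynomial in $S_1$, and then to commute $[t^m]$ past each of the $n$ linear factors. Iterating the rule $[t^m]\circ(S_1+j\,\id)=(S_1+(j-m)\,\id)\circ[t^m]$, one obtains
\[
[t^m]\circ S_n \;=\; \frac{1}{n!}\,\prod_{j=0}^{n-1}\bigl(S_1+(j-m)\,\id\bigr)\circ[t^m]\enspace.
\]
Expanding the polynomial $\frac{1}{n!}\prod_{j=0}^{n-1}(Y+j-m)\in\Q[Y]$ in the indeterminate $Y$ and substituting $Y\mapsto S_1$ produces rational coefficients $b_{n,m,l}\in\Q$ such that $[t^m]\circ S_n=\sum_{l=0}^n b_{n,m,l}\,S_1^{(l)}\circ[t^m]$. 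The value of $b_{n,m,0}$ is read off as the constant term of this polynomial, obtained by setting $Y=0$ in the product.

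The argument is purely formal and I do not expect any real obstacle. The only points requiring attention are~: the bookkeeping of the shift from $(S_1+j\,\id)$ to $(S_1+(j-m)\,\id)$ when pushing $[t^m]$ through each factor, and the matching of the constant coefficient with the asserted value. Keeping the factorization of $({\rm{i}})$ in its explicit product form throughout makes this last step a direct polynomial evaluation.
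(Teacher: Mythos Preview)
Your approach is identical to the paper's: check $({\rm{i}})$ on the monomial basis, then derive $({\rm{ii}})$ by pushing $[t^m]$ past each linear factor in the product form of $S_n$ and expanding the resulting polynomial in $S_1$. One caution on the point you yourself flagged: evaluating at $Y=0$ gives $b_{n,m,0}=\tfrac{1}{n!}\prod_{j=0}^{n-1}(j-m)=(-1)^n\binom{m}{n}$, which matches the asserted value $(-1)^n$ only when $m=n$; the paper's proof records the same explicit expansion without simplifying, and the discrepancy is harmless since the sole later use (in the proof of Lemma~\ref{another presentation}) is precisely the case $m=n$.
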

\begin{proof} Part  $(i)$ follows  by checking that left hand side and right hand side maps coincide on the basis $\{t^l\}$ of $L[t]$, and extend on $L[[t]]$ since the latter is also a topological basis. Part $(ii)$ is derived from $(i)$. 
Since we have\footnote{If $l=n$, the empty sum in the last line is taken to be  equal to $1$.}
\begin{align*}
[t^m]\circ S_{n}&=\frac{1}{n!}[t^m]\circ S_{1}\circ(S_{1}+\id)\circ\cdots\circ(S_{1}+(n-1)\id)\\
                       &=\frac{1}{n!}(S_{1}-m{\rm{Id}})\circ(S_{1} -(m-1)\id)\circ\cdots\circ(S_{1}+(n-1-m)\id)\circ [t^m]\\
                       &=\frac{1}{n!}\sum_{l=0}^n(-1)^{n-l}\left[\sum_{0\le {j_1<\cdots< j_{n-l}}\le n-1}(m-{j_1})\ldots (m-{j_{n-l}})\right]S^{(l)}_{1}\circ [t^m]\enspace.
\end{align*}
\end{proof}
The following lemma is a key ingredient to construct Pad\'{e} approximants of Lerch functions.
\begin{lemma} \label{key lemma} 
Let $\alpha\in L$, $k\in \N$ and $a(t)\in (t-\alpha)^m$, with $m\geq 1$. Then, for every $s,l\in\zu$ such that $0\leq l-s\leq m-1$, one has $\varphi_{s}\circ\deri^{(l)}(a(t))=0$.

Moreover for $s\ge1$, $j\geq 1$ {integers and $n_1,\ldots,n_j\in \Z_{\ge0}$,
let } $a(t)\in L[t]$, {be} divisible by $(t-\alpha)^{n_1+\cdots+n_j}$. 
Then we have for every integer $0\leq  k\leq \min\{n_i-1,1\leq i\leq j\}$,
$$ 
\varphi_{s}([t^k]\circ S_{n_1}\circ\cdots \circ S_{n_j}(a(t)))=0\enspace.
$$
\end{lemma}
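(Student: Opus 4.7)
The plan is to handle the two assertions in sequence. For the first, I iterate Fact~\ref{faitselem}(iii) to obtain $\varphi_s\circ\deri^{(l)}=\varphi_{s-l}$, then use Fact~\ref{faitselem}(i) to rewrite this, when $l\ge s$, as $[\alpha]\circ\ev_\alpha\circ\deri^{(l-s)}$. A direct calculation
\[
\deri\bigl((t-\alpha)^m c(t)\bigr)\;=\;(x+1)(t-\alpha)^m c(t)+m\,t(t-\alpha)^{m-1}c(t)+t(t-\alpha)^m c'(t)
\]
shows that $\deri$ lowers the order of vanishing at $\alpha$ by at most one, so by iteration $\deri^{(l-s)}(a)\in(t-\alpha)^{m-(l-s)}$; the hypothesis $l-s\le m-1$ keeps this in $(t-\alpha)$, and evaluation at $\alpha$ produces $0$.

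For the second claim, the key ingredient will be the commutation identity
\[
\deri\circ[t^k]\;=\;[t^k]\circ(\deri+k\,\id),
\]
proved by a one-line verification of $\deri(t^k P)-t^k\deri(P)=k\,t^k P$ on monomials, using $\deri(t^j)=(j+x+1)t^j$. Using Lemma~\ref{key1}(i) I then write $S_{n_l}=\tfrac{1}{n_l!}\prod_{i=0}^{n_l-1}(\deri+i\,\id)$; the hypothesis $0\le k\le n_l-1$ guarantees that the factor $(\deri+k\,\id)$ appears in each such product, and since all factors commute it can be pulled out, giving $S_{n_l}=(\deri+k\,\id)\,\tilde R_{n_l,k}$, where
\[
\tilde R_{n_l,k}\;:=\;\tfrac{1}{n_l!}\prod_{\substack{0\le i\le n_l-1\\ i\ne k}}(\deri+i\,\id)
\]
is an operator of order $n_l-1$ in $\deri$. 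Collecting all $j$ copies of $(\deri+k\,\id)$ and transporting them past $[t^k]$ via the commutation identity yields
\[
[t^k]\circ S_{n_1}\circ\cdots\circ S_{n_j}(a)\;=\;\deri^{(j)}\Bigl(t^k\cdot(\tilde R_{n_1,k}\circ\cdots\circ\tilde R_{n_j,k})(a)\Bigr).
\]

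Applying $\varphi_s$ and iterating Fact~\ref{faitselem}(iii) $j$ times reduces the problem to
\[
\varphi_{s-j}\Bigl(t^k\cdot(\tilde R_{n_1,k}\circ\cdots\circ\tilde R_{n_j,k})(a)\Bigr)\;=\;0.
\]
Each $\tilde R_{n_l,k}$ lowers the order of vanishing at $\alpha$ by at most $n_l-1$ (same argument as in part (i)), so the composite lowers it by at most $N-j$, where $N=\sum_l n_l$. Since $a\in(t-\alpha)^N$, the argument lies in $(t-\alpha)^j$, and multiplication by $t^k$ preserves this divisibility. A final appeal to the first assertion with $m=j$ then yields the vanishing.

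The main obstacle is the structural step: isolating the factor $(\deri+k\,\id)$ inside each $S_{n_l}$ and recognising that, after collecting the $j$ copies and pushing them through $[t^k]$, one is left with a clean $\deri^{(j)}$ outside. Once this restructuring is in place, the rest is a careful accounting of how the order of vanishing at $\alpha$ degrades under each operator, reducing the problem back to the first assertion.
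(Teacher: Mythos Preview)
Your proof is correct and follows essentially the same route as the paper. Both handle the first assertion via $\varphi_s\circ\deri^{(l)}=\varphi_0\circ\deri^{(l-s)}$ together with the fact that $\deri$ drops the order of vanishing at $\alpha$ by at most one; for the second, both rest on the commutation identity of Lemma~\ref{key1}(i) to recognise $[t^k]\circ S_{n_1}\cdots S_{n_j}$ as a polynomial in $\deri$ of valuation $\ge j$ composed with $[t^k]$ --- your explicit extraction of $(\deri+k\,\id)^j$ and its transport past $[t^k]$ into a clean $\deri^{(j)}$ is simply a more structured rendering of the paper's observation that the resulting operator $U(\deri)$ has valuation $\ge j$, after which the first part is invoked in the same way.
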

\begin{proof} Using the fact that $\varphi_{s}\circ \deri^{(l)}=\varphi_{0}\circ\deri^{(l-s)}$ (Facts~\ref{faitselem}, $(iii)$), if $a(t)$ belongs to $(t-\alpha)^{l-s+1}$ (and assuming $l-s\geq 0$), by Leibniz rule $\varphi_{s}\circ \deri^{(l)}(a(t))=0$.

Now, taking into account Lemma~\ref{key1},  $(i)$, one notes that $[t^k]\circ S_{n_1}\circ\cdots\circ S_{n_j}=U(\deri) \circ[t^k]$ for some polynomial $U(X)\in\qu[X]$ of degree  $n_1+\cdots+n_j$ {\it and} valuation $\geq j$.  Indeed, for each $k\leq\min\{n_i-1,1\leq i\leq j\}$, $[t^k]\circ S_{n_i}=S_{1}\circ V(S_{1})\circ[t^k]$ with $V(X)\in \qu[X]$ by combining both remarks of the above mentioned lemma. Using the first part, as soon as $-s\leq -1$, $\varphi_{s}\left([t^k]\circ S_{n_1}\circ\cdots\circ S_{n_s}(a(t)\right)=0\enspace.$
\end{proof} 

\vspace{\baselineskip}
Let $m, r$ be positive integers and $\boldsymbol{\alpha}:=(\alpha_1,\ldots,\alpha_m)\in (L\setminus\{0\})^m$ which are pairwise distinct, we also assume the variable $z\in L$.
We study Pad\'{e} approximants {{of the second kind}} of $(\Phi_{s}(x,\alpha_i/z))_{\substack{1\le i \le m \\ 1\le s \le r}}$.
Let $l$ be a non-negative integer with $0\le l \le rm$. For a positive integer  $n$, we define a family of polynomials~:
\begin{align}
&P_l(z)=P_{n,l}(\boldsymbol{\alpha},x|z):={\rm{Eval}}_z\circ S^{(r)}_{n}\left(t^l\prod_{i=1}^m(t-\alpha_i)^{rn}\right)\enspace, \label{Qnl}\\
&P_{l,i,s}(z)=P_{n,l,i,s}(\boldsymbol{\alpha},x|z):=\varphi_{\alpha_i,x,s}\left(\dfrac{P_{n,l}(\boldsymbol{\alpha},x|z)-P_{n,l}(\boldsymbol{\alpha},x|t)}{z-t}\right) \ \text{for} \ 1\le i \le m, 1\le s \le r\enspace.\label{Qnlijsj}
\end{align}
Under the above notations, we obtain the following theorem.
\begin{theorem} \label{Pade appro Lerch} 
For each $0\leq l\leq rm$, the family of polynomials $(P_{l}(z),P_{l,i,s}(z))_{\substack{1\le i \le m \\ 1\le s \le r}}$ forms a weight $(n,\ldots,n)\in \N^{rm}$ and degree $rmn+l$ Pad\'{e} type approximants system of $(\Phi_{s}(x,\alpha_i/z))_{\substack{1\le i \le m \\ 1\le s \le r}}$.
\end{theorem}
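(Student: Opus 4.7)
The plan is to verify both the degree condition and the order-of-approximation condition directly from the formal construction, using Lemma \ref{key lemma} as the decisive ingredient.

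\textbf{Degree bound.} First I would observe that $S_n$ acts diagonally on the monomial basis: $S_n(t^k)=\tfrac{(k+x+1)_n}{n!}t^k$. Consequently $S_n^{(r)}$ preserves the degree of every polynomial (the scalar factors $(k+x+1)_n$ are non-zero by the standing assumption that $x+j$ is invertible in $L$ for $j\in\N$). Since $t^l\prod_{i=1}^m(t-\alpha_i)^{rn}$ has degree $rmn+l$, this yields $\deg P_l(z)\le rmn+l$, settling condition (i) of the Padé type definition.

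\textbf{Approximation property.} The heart of the proof is the identity
\[
P_l(z)\,\Phi_s(x,\alpha_i/z)-P_{l,i,s}(z)=\varphi_{\alpha_i,x,s}\!\left(\frac{P_l(t)}{z-t}\right),
\]
which I would derive as follows. Using the integral representation \eqref{Lerch integral rep}, namely $\varphi_{\alpha_i,x,s}(1/(z-t))=\Phi_s(x,\alpha_i/z)$, and the fact that $P_l(z)$ is a scalar with respect to the variable $t$ on which $\varphi_{\alpha_i,x,s}$ operates, one has $P_l(z)\,\Phi_s(x,\alpha_i/z)=\varphi_{\alpha_i,x,s}(P_l(z)/(z-t))$. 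Subtracting the definition \eqref{Qnlijsj} of $P_{l,i,s}(z)$ and using $L$-linearity of $\varphi_{\alpha_i,x,s}$ in the $t$-variable gives the displayed identity.

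\textbf{Killing the low-order coefficients.} Expanding $1/(z-t)=\sum_{k\ge 0}t^k/z^{k+1}$ as a formal Laurent series in $1/z$ and applying $\varphi_{\alpha_i,x,s}$ term by term, one obtains
\[
\varphi_{\alpha_i,x,s}\!\left(\frac{P_l(t)}{z-t}\right)=\sum_{k=0}^{\infty}\frac{\varphi_{\alpha_i,x,s}(t^kP_l(t))}{z^{k+1}}.
\]
It then suffices to show that $\varphi_{\alpha_i,x,s}(t^kP_l(t))=0$ for $0\le k\le n-1$, which gives the required $\mathrm{ord}_{\infty}\ge n+1$. Writing $P_l(t)=S_n^{(r)}(a(t))$ with $a(t)=t^l\prod_{j=1}^m(t-\alpha_j)^{rn}$, we have $t^kP_l(t)=[t^k]\circ S_n\circ\cdots\circ S_n(a(t))$, the composition of $r$ copies of $S_n$. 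Since $a(t)$ is divisible by $(t-\alpha_i)^{rn}=(t-\alpha_i)^{n_1+\cdots+n_r}$ with $n_1=\cdots=n_r=n$, and $0\le k\le n-1=\min_j\{n_j-1\}$, the second part of Lemma \ref{key lemma} (applied with $\alpha=\alpha_i$, $j=r$) yields the desired vanishing for every $s\ge 1$.

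\textbf{Expected obstacle.} The computational steps are all straightforward; the only delicate point is matching the hypotheses of Lemma \ref{key lemma} precisely, in particular verifying that the interchange of $\varphi_{\alpha_i,x,s}$ with the infinite sum in $1/z$ is legitimate. This is handled by Facts \ref{faitselem}(iv) (continuous extension of these linear maps to $L[[t]]$) since the series $1/(z-t)$ converges in the natural $1/z$-adic valuation on $L((1/z))[[t]]$; I would spell this out just briefly as the only non-formal step in the argument.
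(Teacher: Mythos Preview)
Your proposal is correct and follows essentially the same route as the paper: you verify the degree bound via the diagonal action of $S_n$, derive the identity $P_l(z)\Phi_s(x,\alpha_i/z)-P_{l,i,s}(z)=\varphi_{\alpha_i,x,s}(P_l(t)/(z-t))$ from \eqref{Lerch integral rep}, expand as $\sum_k \varphi_{\alpha_i,x,s}(t^kP_l(t))/z^{k+1}$, and kill the first $n$ terms by invoking Lemma~\ref{key lemma} with $j=r$, $n_1=\cdots=n_r=n$. The only addition beyond the paper is your remark on justifying the termwise application of $\varphi_{\alpha_i,x,s}$ via Facts~\ref{faitselem}(iv), which the paper leaves implicit.
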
 
\begin{proof}
By the definition of $P_{l}(z)$, we have
${\rm{deg}}P_{l}(z)=rmn+l$,
so the degree condition is satisfied. We need only to check the valuation condition.
Put $R_{l,i,s}(z):=P_{l}(z)\Phi_{s}(x,\alpha_i/z)-P_{l,i,s}(z)$.
Then, by $(\ref{Lerch integral rep})$, we obtain
$$
%\begin{array}{lcl}
%\displaystyle
R_{l,i,s}(z)
%&
=% &
 \displaystyle P_{l}(z)\varphi_{\alpha_i,x,s}\left(\dfrac{1}{z-t}\right)-P_{l,i,s}(z) \label{residual term decompose}%\\
                                                       %&
                                                       = %& 
                                                       \displaystyle\varphi_{\alpha_i,x,s}\left(\dfrac{P_{l}(t)}{z-t}\right)
                                                       =\sum_{k=0}^{\infty}\frac{\varphi_{\alpha_i,x,s}(t^kP_{l}(t))}{z^{k+1}}\enspace. \nonumber 
                                                       %\end{array}
$$
By {definition, $P_{l}(t)$ is the image by  the morphism $S_{n}\circ\cdots \circ S_{n}$ ($r$ times) of a polynomial} vanishing at $\alpha_i$ at least $rn$ times, so, using Lemma $\ref{key lemma}$ with $j=r$, $n_i=n$ for $i\leq j$,
we get
\begin{align*}
\varphi_{\alpha_i,x,s}(t^kP_{l}(t))=0 \ \text{for} \ 1\le i \le m, 1\le s \le r \ \text{and} \ 0\le k \le n-1\enspace.
\end{align*}
From the expansion above of $R_{l,i,s}(z)$ we obtain
$
{\rm{ord}}_{\infty}R_{l,i,s}(z)\ge n+1 \ \text{for} \ 1\le i \le m, 1\le s \le r\enspace,
$ and Theorem $\ref{Pade appro Lerch}$ follows.
\end{proof}

\section{Proof of Main results}
\label{determinant}
\subsection{Non-vanishing Wronskian of Hermite type}
Let $m,r$ be positive integers, in the sequel, we fix $K=\qu(\alpha_1,\ldots,\alpha_m)$ and  fix $x$ to be some rational number which is {\it not} a strictly negative integer.  
Finally, we set $L=K(z)$.

For a positive integer  $l$ with $0\le l \le rm$, we recall the polynomials $P_l(z),P_{l,i,s}(z)$ defined in $(\ref{Qnl})$ and $(\ref{Qnlijsj})$ respectively.
We define a column vector $\vec{p}_l(z)=\vec{p}_{n,l}(z)\in K[z]^{rm+1}$ by
\begin{align*}
&\vec{p}_{l}(z):={}^t\kern-2pt\Biggl(P_{l}(z),
{P_{l,1,1}(z),\ldots, P_{l,1,r}(z)}, \ldots, {P_{l,m,1}(z),\ldots, P_{l,m,r}(z)}\Biggr)\enspace.
\end{align*}
The aim of this subsection is to prove the following proposition.
\begin{proposition} \label{non zero det}
We use the same notations as above. For any positive integer  $n$, we have
$$
\Delta(z)=\Delta_n(z):=
{\rm{det}}  {\begin{pmatrix}
\vec{p}_{0}(z) \ \cdots \ \vec{p}_{rm}(z)
\end{pmatrix}}
\in K\setminus\{0\}\enspace. 
$$
\end{proposition}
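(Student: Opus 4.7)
The plan is to split the statement into two parts: first, prove that $\Delta(z)$ lies in $K$ (i.e.\ does not depend on $z$); second, show that this constant is non-zero. The first step will be a formal order-at-infinity argument using the Pad\'e approximation property of Theorem~\ref{Pade appro Lerch}; the second step is the harder one, relying on the explicit formal construction of Section~\ref{construcpade} and ultimately reducing, as announced in the introduction, to the non-vanishing of an explicit numerical determinant of integrals of logarithms.

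For the first part, I would set $R_{l,i,s}(z):=P_l(z)\Phi_s(x,\alpha_i/z)-P_{l,i,s}(z)$, which by Theorem~\ref{Pade appro Lerch} has order at least $n+1$ in $1/z$ at infinity. Replacing, for each pair $(i,s)$, the row $(P_{l,i,s}(z))_l$ of the matrix by $\Phi_s(x,\alpha_i/z)\cdot(\text{row }0)-(P_{l,i,s}(z))_l=(R_{l,i,s}(z))_l$ amounts to adding multiples (with coefficients in $K((1/z))$) of the top row to the others, so the determinant of the resulting matrix $\tilde M(z)$ equals $\pm\Delta(z)$ when computed in $K((1/z))$. In $\tilde M(z)$, column $l$ has its top entry $P_l(z)$ of degree $rmn+l$ in $z$ (order $-(rmn+l)$ in $1/z$) and all other entries of order $\ge n+1$ in $1/z$. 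For each permutation $\sigma$ in the Leibniz expansion, setting $l_0:=\sigma^{-1}(0)$, the corresponding product has order at least $-(rmn+l_0)+rm(n+1)=rm-l_0\ge 0$ in $1/z$. Hence $\det\tilde M(z)\in K[[1/z]]$; combined with $\Delta(z)\in K[z]$, this forces $\Delta(z)\in K$.

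For the second part, the order bound above shows that only permutations with $l_0=rm$ contribute to the constant term of $\det\tilde M(z)$, which yields
$$
\Delta=\pm c\cdot D,\qquad D:=\det\bigl(\varphi_{\alpha_i,x,s}(t^n P_l(t))\bigr)_{(i,s),\ 0\le l\le rm-1},
$$
where $c$ is the leading coefficient of $P_{rm}(z)$, equal to $\bigl((rm(n+1)+x+1)_n/n!\bigr)^r$ and non-zero since $x\ge 0$, and the leading $1/z^{n+1}$ coefficient of $R_{l,i,s}(z)$ is $\varphi_{\alpha_i,x,s}(t^n P_l(t))$ by the expansion used in the proof of Theorem~\ref{Pade appro Lerch}. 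The remaining task is to establish $D\ne 0$. I would combine the formal description $P_l(t)=S_n^{(r)}(t^l A(t))$ with $A(t)=\prod_j(t-\alpha_j)^{rn}$, the factorization formulas for $[t^n]\circ S_n^{(r)}$ coming from Lemma~\ref{key1}, and the integral representation $\varphi_{\alpha_i,x,s}(f)=\tfrac{1}{(s-1)!}\int_0^{\alpha_i}\xi^x f(\xi)\log^{s-1}(1/\xi)\,d\xi$ suggested by (\ref{interprim}) and Facts~\ref{faitselem}(iv). After suitable column operations and the rescaling $\xi\mapsto\alpha_i\xi$ in each integral, the expectation is that $D$ factors as a product of non-zero quantities (Pochhammer factors, explicit powers of the $\alpha_i$, and a Vandermonde-type expression in the $\alpha_i$'s) times a single \emph{absolute} numerical determinant whose entries are integrals of the form $\int_0^1\xi^a\log^{s-1}(1/\xi)\,d\xi$, independent of the $\alpha_i$'s and of $n$.

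The main obstacle will be carrying out this factorization explicitly: the operators $S_n$ do not commute with multiplication by $(t-\alpha_i)^{rn}$ nor with evaluation at $\alpha_i$ in any obvious way, so Facts~\ref{faitselem} and Lemma~\ref{key1} have to be applied with careful bookkeeping to cleanly separate the $\alpha$- and $n$-dependence from the absolute part. Once this is accomplished, non-vanishing of the absolute determinant is the content of Lemma~\ref{last lemma}: computing the classical integrals $\int_0^1\xi^a\log^{s-1}(1/\xi)\,d\xi$ in closed form (they are rational in $1/(a+1)$) recasts the absolute determinant as a Cauchy/Hilbert-type determinant, whose non-vanishing is standard.
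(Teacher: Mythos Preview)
Your first part (that $\Delta(z)\in K$) is correct and essentially identical to the paper's Lemma~\ref{sufficient condition}: the same row operations, the same order-at-infinity count, and the same identification of the constant term with the leading coefficient of $P_{rm}$ times the $rm\times rm$ minor $D$. Your formula $\Delta=\pm c\cdot D$ with $c=\bigl((rm(n+1)+x+1)_n/n!\bigr)^r$ matches the paper's equation~(\ref{bunkai 0}).

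The second part, however, has a genuine gap. Your expectation that ``column operations and the rescaling $\xi\mapsto\alpha_i\xi$'' will factor $D$ as (Pochhammer)$\times$(powers of $\alpha_i$)$\times$(Vandermonde in $\alpha_i$)$\times$(an absolute determinant independent of $n$ and of the $\alpha_i$) does not work. After the rescaling $t\mapsto\alpha_i t$, the factor $\prod_j(t-\alpha_j)^{rn}$ becomes $\alpha_i^{rn}(t-1)^{rn}\prod_{j\ne i}(\alpha_it-\alpha_j)^{rn}$; the cross-factors $(\alpha_it-\alpha_j)^{rn}$ remain inside the integral and cannot be pulled out as constants, so the $\alpha$-dependence does \emph{not} separate. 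In fact the correct factor in the $\alpha_i$'s is not a mere Vandermonde but $\prod_{i<j}(\alpha_j-\alpha_i)^{(2n+1)r^2}$ (Proposition~\ref{decompose Cnum}), and establishing this high-order vanishing along the diagonals $\alpha_i=\alpha_j$ is the technical heart of the paper: one views the $\alpha_i$ as formal variables, computes the defect of commutation between $\partial/\partial\alpha$ and the operators $\psi_{\alpha,\boldsymbol{l}}$ (Lemma~\ref{prelimi}), and combines this with antisymmetry arguments (Lemmas~\ref{symetries}--\ref{anti}) and a combinatorial count (Lemmas~\ref{condicombi}--\ref{combi}) to force the first $(2n+1)r^2$ derivatives to vanish at $\alpha_i=\alpha_j$. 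None of this is visible from, or achievable by, linear column operations on the original matrix.

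Your description of the residual ``absolute'' constant is also off. The constant $c_{n,u,m}$ left after the factorization \emph{does} depend on $n$: by Lemma~\ref{reducc} and Lemma~\ref{last lemma} it is (up to sign) a product over $m$ of integrals $\int_{[0,1]^r}\prod_s t_s^{u+x}(t_s-1)^{rn}\log^{s-1}(1/t_s)\prod_{s_1<s_2}(t_{s_2}-t_{s_1})\,d\boldsymbol{t}$, with the factor $(t_s-1)^{rn}$ present. Its non-vanishing is not obtained by recognizing a Cauchy or Hilbert determinant, but by a symmetrization over $\mathfrak{S}_r$ that turns the integrand into a function of constant sign on the simplex (the Vandermonde in the $t_s$ meets a Vandermonde in the $\log t_s$), so the integral cannot vanish. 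Your proposed route via $\int_0^1\xi^a\log^{s-1}(1/\xi)\,d\xi=(s-1)!/(a+1)^s$ would only apply if the integrand were a pure monomial in each variable, which it is not because of the $(t_s-1)^{rn}$ and the Vandermonde factor.
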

{The proof of this proposition is involved  {and will occupy us for} the next subsections. We shall proceed as follows.

The first reduction is performed in Subsection~\ref{firststep}. We first prove that $\Delta(z)\in K$ ({\it i.~e.} $\Delta(z)$ is a constant independent of $z$) which is summed up by Lemma~\ref{sufficient condition}. This is achieved by a valuation argument, by proving that $\Delta(z)$ is a polynomial in $z$, but of negative valuation with respect to $z$.

Next, we move on to expressing $\Delta=\Delta(z)$ as the image of a given polynomial by our base operators $\varphi$. This is performed in Subsection~\ref{secondstep}. To achieve this step, we introduce ``false'' variables (where the variable $t$ is split in many variables $t_{i,s}$ as there are columns. After showing that the determinant is in fact the determinant of a given minor of the original matrix (all the others canceling out, this is just an extension of the previous valuation argument), we take advantage of the false variables and of the linearity of the operators $\varphi$ to express the determinant as desired: the needed result is described by Lemma~$\ref{another presentation}$ and Lemma~\ref{detpsi}. It can be shown, using Leibniz formula and multilinearity of the determinant, that the few spurious terms which would not be nicely factored as desired, actually cancel out.

We move on to factor $\Delta$ viewing the $\alpha_i$ as variables: this is performed in Section~\ref{thirdstep}, the main result being stated as Proposition~\ref{decompose Cnum}.  After having checked homogeneity and found the trivial monomial factors in $\alpha_i$ (Lemma~\ref{homo}), we need to show that $\Delta$ also factors through the $\alpha_i-\alpha_j$ at the appropriate power. To achieve this factorization, one shows that the derivative of $\Delta$ with respect to any one of the variables $\alpha_i$ vanishes at the other $\alpha_j$ at the appropriate power. Unfortunately, derivation and operators $\varphi$ do not commute properly. We thus measure the defect of commutativity (Lemma~\ref{prelimi}), and proceed to note that a derivative of sufficiently high order {composed with $\varphi$} will look like a {standard}  derivative, because the operator $\varphi$ is essentially an iterated primitive. 

 Since we start with a polynomial vanishing of a high order along $\alpha_i-\alpha_j$, these derivatives tend to produce a lot of vanishing. %({\red{Lemmata ~\ref{racine} and \ref{racinesymetrique}}}).
{However, this argument is not sufficient since derivatives of lower order are not enough to remove all the primitivation built in the operators $\varphi$. %\sout{It should be taken care} 
{This last roadblock is dealt with}  via a symmetry argument that proves the corresponding integrals must vanish.}

At this stage, one is ready to prove factorization: a combinatorics argument is enough to conclude. This is done in two steps, preparation of the combinatorial argument summing up the conditions in {Lemma~\ref{condicombi}}.%

Finally, it will remain to check that the last numerical constant (which depends only on $r,m$ and $n$) does not vanish. This is achieved in Subsection~\ref{laststep}. Again, after reinterpretation of that number in terms of the operators $\varphi$, one observes (induction on $m$) that this number is an integral of a (real) function (explicitly available as a polynomial in the given variables and their logarithm) over a hypercube. 
{After the induction step (Lemma~\ref{reducc}), } the non-vanishing of this integral is proven in Lemma~\ref{last lemma}, which completes the proof of Proposition~\ref{non zero det}.}
\subsubsection{First step} 
\label{firststep}
\begin{lemma} \label{sufficient condition}
For all $n\in \N$, $\Delta\in K$.
\end{lemma}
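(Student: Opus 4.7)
The strategy is to show that $\Delta(z)$, which visibly lies in $K[z]$, has degree zero in $z$, so that in fact $\Delta \in K$. From the explicit construction of $P_l(z)$ via $S_n^{(r)}$ one reads $\deg_z P_l(z) = rmn + l$, and since $P_{l,i,s}(z) = \varphi_{\alpha_i,x,s}\bigl((P_l(z)-P_l(t))/(z-t)\bigr)$ is obtained by applying a $t$-operator to a polynomial in $z,t$ of $z$-degree $rmn+l-1$, one has $\deg_z P_{l,i,s}(z) \leq rmn + l -1$. In particular, every entry of the matrix is polynomial in $z$, so $\Delta(z) \in K[z]$.

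The key ingredient is the Padé remainder identity
$$R_{l,i,s}(z) := P_l(z)\,\Phi_s(x,\alpha_i/z) - P_{l,i,s}(z)\enspace,$$
for which the proof of Theorem~\ref{Pade appro Lerch} has already established $\mathrm{ord}_{\infty} R_{l,i,s}(z) \geq n+1$. I would perform the elementary row operations (valid over $K((1/z))$) replacing, for every pair $(i,s)$, the $(i,s)$-th row by $\Phi_s(x,\alpha_i/z)$ times the top row minus the $(i,s)$-th row. This produces a matrix $M'(z)$ whose top row is unchanged, namely $(P_0(z),\ldots,P_{rm}(z))$, while the other rows consist of $(R_{0,i,s}(z),\ldots,R_{rm,i,s}(z))$. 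The transition matrix has determinant $(-1)^{rm}$, so $\det M'(z) = (-1)^{rm}\,\Delta(z)$.

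Next, I would bound $\mathrm{ord}_{\infty} \det M'(z)$ via the Leibniz expansion. Each term of the expansion picks exactly one entry from the top row, a polynomial of degree at most $rmn + rm$, and $rm$ entries of remainder type, each of order at infinity at least $n+1$. Hence each term has order at infinity bounded below by
$$-(rmn + rm) + rm(n+1) = 0\enspace,$$
so $\det M'(z) \in K[[1/z]]$. Combined with the identity $\det M'(z) = (-1)^{rm}\Delta(z) \in K[z]$, this yields $\Delta(z) \in K[z] \cap K[[1/z]] = K$, as desired.

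The argument is almost a one-line exponent count once the remainder identity has been invoked, so the only bookkeeping is tracking the polynomial degrees of $P_l$ and the valuations of $R_{l,i,s}$ already furnished by Theorem~\ref{Pade appro Lerch}. The genuine difficulty does not lie here but in the subsequent task of showing that the constant $\Delta$ is \emph{non-zero}; that more delicate step is handled in the later subsections through the factorization and symmetry/combinatorial arguments culminating in Proposition~\ref{decompose Cnum} and Lemma~\ref{last lemma}.
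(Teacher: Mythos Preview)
Your argument is correct and matches the paper's proof essentially line for line: both perform the same row operations to replace the $P_{l,i,s}$ entries by the remainders $R_{l,i,s}$, and then use the degree/valuation count $\deg P_l \le rmn+rm$ together with $\mathrm{ord}_\infty R_{l,i,s}\ge n+1$ to conclude $\Delta(z)\in K[z]\cap K[[1/z]]=K$. The only cosmetic difference is that the paper expands along the first row via cofactors (recording the slightly sharper observation that only the $l=rm$ term can contribute the constant), whereas you bound all Leibniz terms uniformly; for the purpose of this lemma the two are equivalent.
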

\begin{proof}
Recall  $R_{l,i,s}(z)=P_{l}(z)\Phi_{s}(x,\alpha_i/z)-P_{l,i,s}(z)$ for $0\le l \le rm$, $1\le i \le m$ and $1\le s \le r$.
For the matrix in the definition of $\Delta(z)$, adding $-\Phi_{s}(x,\alpha_i/z)$ times 
first row to $(i-1)r+s+1$-th row for $1\le i \le m$ and $1\le s \le r$, we obtain 
                     $$ 
                     \Delta(z)=(-1)^{rm}{\rm{det}}
                     {\begin{pmatrix}
                     P_{0}(z) & \dots &P_{rm}(z)\\
                     R_{0,1,1}(z) & \dots & R_{rm,1,1}(z)\\
                     \vdots    & \ddots & \vdots  \\
                     R_{0,1,r}(z) & \dots & R_{rm,1,r}(z)\\
                     \vdots & \ddots & \vdots\\
                     R_{0,m,1}(z) & \dots & R_{rm,m,1}(z)\\
                     \vdots    & \ddots & \vdots  \\
                     R_{0,m,r}(z) & \dots & R_{rm,m,r}(z)\\
                     \end{pmatrix}}\enspace. 
                     $$
We denote the $(s,t)$-th cofactor of the matrix in the right hand side of the above equality by $\Delta_{s,t}(z)$.
Then we have, developing along the first row 
\begin{align} \label{formal power series rep delta}
\Delta(z)=(-1)^{rm}\left(\sum_{l=0}^{rm}P_{l}(z)\Delta_{1,l+1}(z)\right)\enspace.
\end{align} 
Since we obtained in Theorem~\ref{Pade appro Lerch}
\begin{align*}
{\rm{ord}}_{\infty} R_{l,i,s}(z)\ge n+1 \ \text{for} \ 0\le l \le rm, \ 1\le i\le m \ \text{and} \ 1\le s \le r\enspace,
\end{align*}
we have
$$
{\rm{ord}}_{\infty}\Delta_{1,l+1}(z)\ge (n+1)rm\enspace.
$$
Using ${\rm{deg}}P_{l}(z)=rmn+l$ and this lower bound for the valuation of $\Delta_{1,l+1}(z)$, we obtain
\begin{equation}
P_{l}(z)\Delta_{1,l+1}(z)\in 1/zK[[1/z]] \ \text{for} \ 0\le l \le rm-1\ ,
\mbox{ and }
 \label{const+laurent series}
P_{rm}(z)\Delta_{1,rm+1}(z)\in K[[1/z]]\enspace.
\end{equation}
Note that in the above relation, the constant term of $P_{rm}(z)\Delta_{1,rm+1}(z)$ is 
\begin{equation*}
``\text{Coefficient of} \ z^{rm(n+1)} \ \text{of} \ P_{rm}(z)\text{''} \times ``\text{Coefficient of} \ 1/z^{rm(n+1)} \ \text{of} \ \Delta_{1,rm+1}(z)\text{''}\enspace.
\end{equation*}  
Thus, by $(\ref{formal power series rep delta})$, $\Delta(z)$ is a polynomial in $z$ with non-positive valuation with respect to ${\rm{ord}}_{\infty}$, it has to be a constant. 
Moreover, the terms with strictly negative valuation have to cancel out, so 
\begin{align}\label{in K}
\Delta(z)=(-1)^{rm}\kern-2pt\times\kern-2pt\left(\sum_{l=0}^{rm}P_{l}(z)\Delta_{1,l+1}(z)\right)=(-1)^{rm}\times``\text{Constant term of} \ P_{rm}(z)\Delta_{1,rm+1}(z)\text{''} \in K\enspace.
\end{align}
This completes the proof of Lemma $\ref{sufficient condition}$.
\end{proof} 
We can now proceed to the second step (factoring $\Delta$ as an element of $K$).
\subsubsection{Second step} 
\label{secondstep}
We use the same notations as in the proof of Lemma $\ref{sufficient condition}$.
By the equalities $(\ref{const+laurent series})$ and $(\ref{in K})$, we have
{\small{\begin{align} \label{in K2}
\Delta(z)=(-1)^{rm}\times ``\text{Coefficient of} \ z^{rm(n+1)} \ \text{of} \ P_{rm}(z)'' \times ``\text{Coefficient of} \ 1/z^{rm(n+1)} \ \text{of} \ \Delta_{1,rm+1}(z)\text{''}\enspace.
\end{align}}}
Define a column vector $\vec{q}_l={\vec{q}}_{n,l}\in K^{rm}$ by
\begin{align*}
\vec{q}_{l}:={}^t\Biggl(\varphi_{\alpha_1,x,1}(t^nP_{l}(t)),\ldots, \varphi_{\alpha_1,x,r}(t^nP_{l}(t)),\ldots, \varphi_{\alpha_m,x,1}(t^nP_{l}(t)),\ldots, \varphi_{\alpha_m,x,r}(t^nP_{l}(t))\Biggr)\enspace.
\end{align*}
Then by the definition of $\Delta_{1,rm+1}(z)$ and the equalities 
\begin{align*}
R_{l,i,s}(z)=\sum_{k=n}^{\infty}\frac{\varphi_{\alpha_i,x,s}(t^kP_{l}(t))}{z^{k+1}} \kern10pt \text{for} \ 0\le l \le rm \ 1\le i \le m \ \text{and} \ 1\le s \le r\enspace,
\end{align*} 
we have
$$
``\text{Coefficient of} \ 1/z^{rm(n+1)} \ \text{of} \ \Delta_{1,rm+1}(z)\text{''}={\rm{det}}
{\begin{pmatrix}
\vec{q}_{0} \ \cdots \ \vec{q}_{rm-1}
\end{pmatrix}}\enspace.
$$
Thus by $(\ref{in K2})$ and the above equality, we have 
\begin{align} \label{bunkai 0} 
\Delta(z)=(-1)^{rm}\dfrac{1}{(rm(n+1))!}\left(\dfrac{d}{dz}\right)^{rm(n+1)}P_{rm}(z)\cdot
{\rm{det}}{\begin{pmatrix}
\vec{q}_{0} \ \cdots \ \vec{q}_{rm-1}
\end{pmatrix}} \ \text{for all} \ n\in \N\enspace. 
\end{align}
Then, by the definition of $P_{rm}(z)$, we have
$$\dfrac{1}{(rm(n+1))!}\left(\dfrac{d}{dz}\right)^{rm(n+1)}P_{rm}(z)=\left(\dfrac{(1+rm(n+1)+x)_n}{n!}\right)^r \neq 0\enspace.$$
By the equality $(\ref{bunkai 0})$, we study the values
\begin{align*} 
\Theta=\Theta_n:= 
{\rm{det}}  {\begin{pmatrix}
\vec{q}_{0} \ \cdots \ \vec{q}_{rm-1} 
\end{pmatrix}} \ \text{for} \ n\in \N\enspace.
\end{align*}
\vspace{\baselineskip}

Next, we shall need to separate the variables, so, instead of working over the ring $L[t]$, we shall consider as many variables as there are Lerch functions. 
For each variable $t_{i,s}$ and any integer, one has a well defined map
\begin{equation*}
\varphi_{i,s,l}=\varphi_{\alpha_i,t_{i,s},x, l}:L[t_{i,s}]_{1\leq i\leq m,1\leq s\leq r}\longrightarrow L[t_{i',s'}]_{(i',s')\neq (i,s)}; \ \ t^k_{i,s}\mapsto \dfrac{\alpha_i^{k+1}}{(k+x+1)^l}\enspace,
\end{equation*} 
using the \kern-0.5ptdefinition \kern-0.5ptabove \kern-0.5ptwhere $L[\kern-1ptt_{i,s}\kern-1pt]_{\kern-1pt1\leq i\leq m,1\leq s\leq r}$ is seen as the  one variable \kern-0.5ptpolynomial ring over $L\kern-1pt[t_{i',s'}\kern-1pt]_{(i',s')\neq (i,s)}$.

For a positive integer  $n$ and an integer $l$ with $ 0\le l \le rm$, we put 
\begin{align*}
&A_{l}(z)=A_{n,l}(z):=z^l\prod_{i=1}^m(z-\alpha_i)^{rn}\enspace.
\end{align*}
By the definition of $A_{l}(z)$, we have $P_{l}(z)={\rm{Eval}}_z\circ S^{(r)}_{n}\left(A_{l}(t)\right).$

\ 

Define a column vector $\vec{{\rho}}_l=\vec{{\rho}}_{n,l}\in L^{rm}$ by
$$
\vec{{\rho}}_{l}:=
{}^t\kern-.51pt\Biggl(\kern-2pt\varphi_{1,1,1}(t^n_{1,1}A_{l}(t_{1,1})), \ldots, \varphi_{1,1,r}(t^n_{1,r}A_{l}(t_{1,r})), \ldots, \varphi_{m,1,1}(t^n_{m,1}A_{l}(t_{m,1})), \ldots, \varphi_{m,r,r}(t^n_{m,r}A_{l}(t_{m,r}))\kern-2pt\Biggr)\enspace\kern-3pt.
$$
\begin{lemma} \label{another presentation}
We use the notations as above. Then we obtain the following equality~$:$
$$ %\label{another rep}
\Theta_n=(-1)^{r^2mn}
{\rm{det}}  {\begin{pmatrix}
\vec{{\rho}}_{0} \
\cdots \
\vec{{\rho}}_{rm-1}
\end{pmatrix}}\enspace.
$$
\end{lemma}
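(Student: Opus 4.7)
The idea is to express $\Theta_n$ as the determinant of a block-triangular matrix whose right factor $W$ has columns exactly $\vec\rho_l$, thereby isolating the effect of $S_n^{(r)}$ that distinguishes $P_l$ from $A_l$. First I would separate variables: since each $\varphi_{\alpha_i,x,s}(t^n P_l(t))$ lives in $L$ and depends only on the $(i,s)$-th row, the dummy variable $t$ may be renamed row by row. Writing $t_{i,s}$ in the $(i,s)$-th row gives
$$\Theta_n=\det\bigl[\varphi_{i,s,s}(t_{i,s}^n P_l(t_{i,s}))\bigr]_{(i,s),\,l}\enspace.$$
The multivariable operators $\varphi_{i,s,s}$ act on pairwise disjoint variables, hence commute, and each is $L$-linear in its own variable; by multilinearity of the determinant in its rows they pull outside.

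Next, I would substitute $P_l=S_n^{(r)}(A_l)$. Iterating Lemma~\ref{key1}~(i) $r$ times yields the identity
$$[t^n]\circ S_n^{(r)}=\frac{1}{(n!)^r}\Bigl(\prod_{k=1}^n(S_1-k\id)\Bigr)^{\!r}\circ[t^n]$$
in $\mathrm{End}_L(L[t])$. Applied to $A_l(t_{i,s})$ and composed with $\varphi_{i,s,s}$, together with the shift $\varphi_{i,s,s}\circ S_1=\varphi_{i,s,s-1}$ (Facts~\ref{faitselem}~(iii) in the multivariable setting), this produces
$$\varphi_{i,s,s}(t_{i,s}^n P_l(t_{i,s}))=\sum_{j=0}^{rn}c_j\,\varphi_{i,s,s-j}(t_{i,s}^n A_l(t_{i,s}))\enspace,$$
where $c_j$ are the coefficients of the polynomial $P(X):=\bigl(\prod_{k=1}^n(X-k)\bigr)^{r}/(n!)^r$, with constant term $c_0=P(0)=(-1)^{rn}$. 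Each summand depends only on the effective index $s':=s-j$, so I abbreviate it $w_{i,s',l}$.

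Finally, I would exploit the vanishing built into $A_l$: since $t^n A_l(t)=t^{n+l}\prod_{i'=1}^m(t-\alpha_{i'})^{rn}$ is divisible by $(t-\alpha_i)^{rn}$, Lemma~\ref{key lemma} (first part, applied with $a=t^nA_l$ and $\deri^{(0)}=\id$) forces $w_{i,s',l}=0$ for every $s'\in\{1-rn,\ldots,0\}$. Since $s-j$ ranges over $\{1-rn,\ldots,r\}$ as $(s,j)\in\{1,\ldots,r\}\times\{0,\ldots,rn\}$ varies, only $s'\in\{1,\ldots,s\}$ actually contributes. This shows that the matrix $Q=[\varphi_{\alpha_i,x,s}(t^n P_l(t))]$ factors as $Q=LW$, where $L=\mathrm{diag}(L_1,\ldots,L_m)$ is block-diagonal with each $L_i$ the $r\times r$ lower-triangular matrix defined by $(L_i)_{s,s'}=c_{s-s'}$ for $1\le s'\le s\le r$ (diagonal entries equal to $c_0$), and $W=[w_{i,s,l}]_{(i,s),\,l}$ has $l$-th column $\vec\rho_l$. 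Hence
$$\Theta_n=\det L\cdot\det W=(c_0^r)^m\det(\vec\rho_0,\ldots,\vec\rho_{rm-1})=(-1)^{r^2mn}\det(\vec\rho_0,\ldots,\vec\rho_{rm-1})\enspace.$$
The main obstacle in this plan is the careful bookkeeping of the commutation $[t^n]\circ S_n^{(r)}$ and the identification of the precise vanishing range $\{1-rn,\ldots,0\}$ of $w_{i,s',l}$; once these two points are secured the block-triangular factorization $Q=LW$ yields the claim immediately.
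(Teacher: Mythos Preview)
Your proof is correct and follows essentially the same approach as the paper's own proof. Both arguments commute $[t^n]$ past $S_n^{(r)}$ via Lemma~\ref{key1} to obtain $\varphi_{\alpha_i,x,s}(t^nP_l(t))=\sum_{k=0}^{s-1}e_{n,k}\,\varphi_{\alpha_i,x,s-k}(t^nA_l(t))$ with $e_{n,0}=(-1)^{rn}$, invoke Lemma~\ref{key lemma} to kill the terms with $s-k\le 0$, and then read off the determinant from the resulting triangular structure; the paper phrases this last step as row operations while you phrase it as a block-triangular factorization $Q=LW$, which amounts to the same thing.
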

\begin{proof}
Using Lemma $\ref{key1}$  $(ii)$ {with $m=n$}, there exists a set of rational numbers $\{e_{n,k}\}_{0\le k \le rn}\subset \qu$ with $e_{n,0}=(-1)^{rn}$ and
$$ %\label{decompose t^n S^r_n}
[t^n]\circ S^{(r)}_{n}=\sum_{k=0}^{rn}e_{n,k}S^{(k)}_{1}\circ [t^n]\enspace.
$$
By the above equality, we obtain  
\begin{align}
\varphi_{{\alpha_i,x,s}}(t^nP_{l}(t))&=\sum_{k=0}^{rn}e_{n,k}\varphi_{{\alpha_i,x,s}}\circ S^{(k)}_{1}\circ[t^n](A_{l}(t))\label{calculation}\\
                                         &=\sum_{k=0}^{s-1}e_{n,k}\varphi_{{\alpha_i,s,s-k}}\circ[t^n](A_{l}(t))+\sum_{k=s}^{rn}e_{n,k}\varphi_{{\alpha_i,x,0}}\circ S^{(k-s)}_{1}\circ[t^n](A_{l}(t))\nonumber\\
                                         &=\sum_{k=0}^{s-1}e_{n,k}\varphi_{{\alpha_i,x,s-k}}(t^nA_{l}(t))\enspace, \nonumber %\label{vanish k>s}
\end{align}
for $1\le i \le m$ and $1\le s \le r$. Note, in the last equality as above, we use Lemma $\ref{key lemma}$. 
We define the set of row vectors of length $rm$ with entries $L$ , $\{\vec{v}_{i,s}=\vec{v}_{n,i,s}\}_{\substack{1\le i \le m \\ 1\le s \le r}}$ by
\begin{align*}
\vec{v}_{i,s}:=\left(\sum_{k=0}^{s-1}e_{n,k}\varphi_{\alpha_i,x,s-k}(t^nA_{n,l}(t))\right)_{0\le l \le rm-1}\enspace.
\end{align*}
By $(\ref{calculation})$, we obtain using linear combination of lines
\begin{align*}
\Theta= 
{\rm{det}}{}^t\kern-2pt{\begin{pmatrix}
\vec{v}_{1,1} \ \cdots \ \vec{v}_{1,r} \ \cdots \ \vec{v}_{m,1}  \ \cdots \ \vec{v}_{m,r}
\end{pmatrix}}
=(-1)^{r^2mn}
{\rm{det}}  {\begin{pmatrix} 
\vec{{\rho}}_{0} \ \cdots \ \vec{{\rho}}_{rm-1}
\end{pmatrix}}\enspace.
\end{align*}
This completes the proof of Lemma $\ref{another presentation}$.
\end{proof}
We now define for non-negative integers $u,n$ where $\boldsymbol{t}$ stands for $\{t_{i,s}\}_{1\leq i\leq m,1\leq s\leq r}$
$$
{{P_u(\boldsymbol{t})=P_{u,n, \boldsymbol{\alpha}}(\boldsymbol{t})}}=\prod_{i=1}^{m}\prod_{s=1}^r\left[ t_{i,s}^u\prod_{j=1}^m(t_{i,s}-\alpha_j)^{rn}\right]\prod_{(i_1,s_1)<(i_2,s_2)}(t_{i_2,s_2}-t_{i_1,s_1})\enspace,
$$
where the order $(i_1,s_{1})<(i_2,s_{2})$ means lexicographical order.
Also set (when no confusion is deemed to occur, we omit the subscripts $\boldsymbol{\alpha}=(\alpha_1,\ldots,\alpha_m)$)~:
$$\psi=\psi_{\boldsymbol{\alpha}}:=\bigcirc_{i=1}^{m}\bigcirc_{s=1}^r\varphi_{i,s,s}\enspace.$$
Then, by the definition of 
$
{\rm{det}}  {\begin{pmatrix}
\vec{{\rho}}_{0} \
\cdots \
\vec{{\rho}}_{rm-1}
\end{pmatrix}},
$
we have just {proven~:
\begin{lemma}
\label{detpsi}
$$
{\rm{det}}  {\begin{pmatrix}
\vec{{\rho}}_{0} \
\cdots \
\vec{{\rho}}_{rm-1}
\end{pmatrix}}=\psi(P_{n})\enspace.
$$
\end{lemma}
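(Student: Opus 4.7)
The plan is to recognize the right-hand side as a Vandermonde-style factorization of $P_n(\boldsymbol t)$ and then exchange $\psi$ with the determinant, using that the $rm$ operators composing $\psi$ act on pairwise disjoint variables.

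First I would rewrite the Vandermonde factor in $P_n$. Ordering the variables $(t_{i,s})_{1\le i\le m,\,1\le s\le r}$ lexicographically gives a list of exactly $rm$ variables, and the standard Vandermonde identity yields
$$
\prod_{(i_1,s_1)<(i_2,s_2)}(t_{i_2,s_2}-t_{i_1,s_1})=\det\bigl(t_{i,s}^{\,l}\bigr)_{(i,s),\,0\le l\le rm-1}\enspace,
$$
with rows indexed by the pair $(i,s)$ and columns indexed by $l$. The remaining factor $\prod_{i,s}\bigl[t_{i,s}^{\,n}\prod_{j=1}^m(t_{i,s}-\alpha_j)^{rn}\bigr]=\prod_{i,s}t_{i,s}^{\,n}A_0(t_{i,s})/t_{i,s}^{\,0}$ depends only on the row $(i,s)$, so we can pull it into the corresponding row of the determinant to get
$$
P_{n}(\boldsymbol{t})=\det\!\Bigl(t_{i,s}^{\,n+l}\!\prod_{j=1}^{m}(t_{i,s}-\alpha_j)^{rn}\Bigr)_{(i,s),\,l}=\det\bigl(t_{i,s}^{\,n}\,A_{l}(t_{i,s})\bigr)_{(i,s),\,l}\enspace.
$$

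Next I would apply $\psi$ term-by-term in the Leibniz expansion. Each matrix entry $t_{i,s}^{\,n}A_l(t_{i,s})$ depends \emph{only} on the single variable $t_{i,s}$ attached to its row. Hence in the expansion
$$
\det\bigl(t_{i,s}^{\,n}A_{l}(t_{i,s})\bigr)=\sum_{\sigma\in\mathfrak S_{rm}}\mathrm{sgn}(\sigma)\prod_{(i,s)}t_{i,s}^{\,n}A_{\sigma(i,s)}(t_{i,s})\enspace,
$$
every summand is a product of monomials in disjoint variables. The operators $\varphi_{i,s,s}$ are $L$-linear, each acts only on its own variable $t_{i,s}$, and they therefore commute pairwise; applying $\psi=\bigcirc_{i,s}\varphi_{i,s,s}$ to a product in disjoint variables equals the product of the images. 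Thus $\psi$ passes through both the sum and the product, yielding
$$
\psi(P_{n})=\sum_{\sigma}\mathrm{sgn}(\sigma)\prod_{(i,s)}\varphi_{i,s,s}\bigl(t_{i,s}^{\,n}A_{\sigma(i,s)}(t_{i,s})\bigr)=\det\bigl(\varphi_{i,s,s}(t_{i,s}^{\,n}A_{l}(t_{i,s}))\bigr)_{(i,s),\,l}\enspace.
$$
Finally, the entry of row $(i,s)$ and column $l$ of this matrix is by construction the $(i,s)$-component of the column vector $\vec\rho_{l}$, which gives the claimed identity. There is no real obstacle here: the only care needed is to order the Vandermonde rows consistently with the ordering adopted in the definition of the $\vec{\rho}_{l}$ and of $\psi$, so that no spurious sign arises.
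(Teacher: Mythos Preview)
Your proof is correct and is exactly the computation the paper has in mind: the paper simply states that the lemma holds ``by the definition'' of $\det(\vec\rho_0\cdots\vec\rho_{rm-1})$, leaving the Vandermonde expansion of $P_n(\boldsymbol t)$ and the passage of $\psi$ through the Leibniz sum implicit. You have merely spelled out these two steps (Vandermonde factorization into row-dependent entries, and multiplicativity of $\psi=\bigcirc_{i,s}\varphi_{i,s,s}$ on products in disjoint variables), which is precisely the intended argument.
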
}
\subsubsection{Third step}
\label{thirdstep}
Let $u$ be a non-negative integer and $m$ a positive integer
, we study the value
\begin{align*}
&C_{u,m}=C_{n,u,m}({{\boldsymbol{\alpha}}}):=\psi_{\boldsymbol{\alpha}}(P_{u})=\psi(P_{u})\enspace.
\end{align*} 
The aim of this section is to prove the following proposition.

\begin{proposition} \label{decompose Cnum}
We use above notations. Then there exists a constant $c_{u,m}=c_{n,u,m}\in \qu$ with
$$
C_{u,m}=c_{u,m}\prod_{i=1}^m\alpha^{r(u+1)+r^2n+\binom{r}{2}}_i \prod_{1\le i<j\le m}(\alpha_{j}-\alpha_{i})^{(2n+1)r^2}\enspace.
$$
\end{proposition}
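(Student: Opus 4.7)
The plan is to prove the identity by viewing $\alpha_1,\ldots,\alpha_m$ as formal indeterminates over $\qu$; specialization then yields the claim in $K$. The proof proceeds in three stages that implement the road-map sketched after the statement: (a) a scaling/homogeneity argument together with a minimization of monomial contributions to extract the diagonal factor $\prod_i\alpha_i^{r(u+1)+r^2n+\binom{r}{2}}$; (b) a vanishing-of-derivatives argument to produce divisibility by $(\alpha_j-\alpha_i)^{(2n+1)r^2}$; (c) a degree comparison to identify the remaining factor as a constant $c_{u,m}\in\qu$.

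\textbf{Stage 1 (homogeneity and the diagonal $\alpha_i$-powers).} Under the common scaling $(\boldsymbol{\alpha},\boldsymbol{t})\mapsto(\lambda\boldsymbol{\alpha},\lambda\boldsymbol{t})$, the polynomial $P_u$ is homogeneous of degree $rm(u+rnm)+\binom{rm}{2}$, while each $\varphi_{i,s,s}=[\alpha_i]\circ\ev_{\alpha_i}\circ\inte^{(s)}_x$ sends $t_{i,s}^k$ to $\alpha_i^{k+1}/(k+x+1)^s$ and hence raises the total $\alpha$-degree by one; applied $rm$ times, $\psi$ raises the degree by $rm$, so $C_{u,m}$ is homogeneous of the correct total degree. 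To extract the diagonal factor, fix $i$ and, for every monomial $M$ in the expansion of $P_u$, observe that the $\alpha_i$-exponent of $\psi(M)$ equals the direct $\alpha_i$-exponent of $M$ plus $\sum_{s=1}^r(k_{i,s}+1)$, with $k_{i,s}$ the $t_{i,s}$-exponent of $M$. A factor-by-factor minimization yields: $r$ (from the $+1$'s), $ru$ (from the $r$ factors $t_{i,s}^u$), $r^2n$ (from the $r$ factors $(t_{i,s}-\alpha_i)^{rn}$, since in each monomial summand the $\alpha_i$-power and the $t_{i,s}$-power add up to $rn$), and $\binom{r}{2}$ (from the Vandermonde pieces $(t_{i,s_2}-t_{i,s_1})$ with $1\le s_1<s_2\le r$, each forcing $+1$ in some $k_{i,s}$). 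The sum is precisely $r(u+1)+r^2n+\binom{r}{2}$.

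\textbf{Stage 2 (divisibility by $(\alpha_j-\alpha_i)^{(2n+1)r^2}$).} This is the main obstacle. Fix $i<j$; one seeks to show $\partial_{\alpha_j}^{\,k}C_{u,m}\vert_{\alpha_j=\alpha_i}=0$ for $0\le k<(2n+1)r^2$. The initial obstruction is that $\partial_{\alpha_j}$ does not commute with the operators $\varphi_{j,s,s}$: a preliminary commutation identity (the \emph{prelimi} lemma alluded to in the excerpt) expresses $\partial_{\alpha_j}\circ\varphi_{j,s,s}$ as an $L$-linear combination of $\varphi_{j,s,s'}$'s with $s'\le s$ and an $\ev_{\alpha_j}$-boundary term. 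Iterating, $\partial_{\alpha_j}^{\,k}\psi(P_u)$ expands into a finite sum of terms of the form $\widetilde\psi(\partial_{\alpha_j}^{\,\ell}P_u)$ with $\ell\le k$, where $\widetilde\psi$ is obtained from $\psi$ by lowering the primitivation indices of some of the $\varphi_{j,s,s}$'s. Leibniz distributes the $\ell$ derivatives among the $rm$ factors $(t_{i',s'}-\alpha_j)^{rn}$; any factor receiving strictly fewer than $rn$ derivatives retains positive-order vanishing along $t_{i',s'}=\alpha_j$, which after specialization $\alpha_j=\alpha_i$ yields a factor $(t_{i',s'}-\alpha_i)^{>0}$. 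By Lemma~\ref{key lemma}, applied through the \emph{racine} and \emph{racinesymetrique} statements, $\widetilde\psi$ annihilates such terms. The combinatorial bookkeeping (the \emph{condicombi}/\emph{combi} lemmas) shows that, for $k<(2n+1)r^2$, the only distributions of derivatives not eliminated by this pigeonhole argument form a symmetric residual class: in those terms, $\widetilde\psi$ inherits antisymmetry from the $r$ Vandermonde pieces $(t_{j,s}-t_{i,s})$ under the simultaneous transposition $(t_{i,s},\alpha_i)\leftrightarrow(t_{j,s},\alpha_j)$ (whereas $P_u$ divided by those $r$ pieces is invariant under this transposition), and the corresponding formal integrals vanish by the \emph{symetries}/\emph{anti} argument.

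\textbf{Stage 3 (degree count).} Combining Stages 1 and 2, one writes $C_{u,m}=F(\boldsymbol{\alpha})\cdot\prod_{i=1}^m\alpha_i^{r(u+1)+r^2n+\binom{r}{2}}\prod_{1\le i<j\le m}(\alpha_j-\alpha_i)^{(2n+1)r^2}$ with $F\in\qu[\boldsymbol{\alpha}]$. A direct comparison—identical in spirit to the homogeneity calculation of Stage 1—shows that the identified product already exhausts the total $\boldsymbol{\alpha}$-degree of $C_{u,m}$, so $F$ is a constant $c_{u,m}\in\qu$, completing the proof. The principal difficulty is Stage 2: tracking simultaneously the non-commutativity of $\partial_{\alpha_j}$ with the primitive operator $\inte^{(s)}_x$, running the Leibniz-plus-pigeonhole vanishing, and disposing of the symmetric residual terms that lie beyond the reach of a pure vanishing-order argument via the antisymmetry/integration-range-preserving substitution.
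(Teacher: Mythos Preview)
Your outline follows the paper's three-step roadmap, and Stages~1 and~3 are essentially sound (the monomial-by-monomial minimum-exponent count in Stage~1 is a valid alternative to the paper's explicit scaling factorization in Lemma~\ref{homo}). The genuine gap is in Stage~2: the antisymmetry mechanism you describe does not work.

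You propose to kill the residual terms via the \emph{simultaneous} block transposition $(t_{i,s},\alpha_i)\leftrightarrow(t_{j,s},\alpha_j)$ for all $s$ at once, with the sign supplied by ``the $r$ Vandermonde pieces $(t_{j,s}-t_{i,s})$''. Two things break. First, the sign the integrand picks up under this swap is $(-1)^r$ (the $r(r-1)$ off-diagonal cross factors $(t_{j,s'}-t_{i,s})$ with $s\neq s'$ pair off and contribute $+1$, and the two within-block Vandermondes are merely exchanged), so for even $r$ there is no antisymmetry at all. Second, and more seriously, after iterating the commutation rule the operator $\widetilde\psi$ carries lowered primitivation indices $\boldsymbol{k}-\boldsymbol{I}$ on the $j$-block but the original indices $\boldsymbol{k}$ on the $i$-block; after specialization $\alpha_j=\alpha_i$ the block swap preserves $\widetilde\psi$ only when $\boldsymbol{I}=\boldsymbol{0}$. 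For any term with $\vert\boldsymbol{I}\vert\ge 1$ the swap does not send the expression to $\pm$ itself, and no vanishing follows.

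The paper's symmetry argument is structurally different: it uses a \emph{single} transposition $t_{i,a}\leftrightarrow t_{j,b}$ of just two variables, applicable precisely when the concatenated index vector $(\boldsymbol{k},\boldsymbol{k}-\boldsymbol{I})\in\Z^{2r}$ has two equal entries, and exploits the full antisymmetry of the $2r$-variable Vandermonde $g$ under \emph{any} transposition (Lemma~\ref{symetries}). The correct dichotomy (Lemma~\ref{condicombi}) is therefore: either $(\boldsymbol{k},\boldsymbol{k}-\boldsymbol{I})$ has a repeated entry, and single-transposition antisymmetry kills the term; or all $2r$ entries are pairwise distinct, in which case the root-vanishing condition (Lemma~\ref{racinesymetrique}, which needs the lowered index $s-a_s$ to be sufficiently negative relative to the doubled order $2rn$) also failing forces, by a short counting argument (Lemma~\ref{combi}), $l\ge(2n+1)r^2$. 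Your ``symmetric residual class'' is the wrong object: the residual case is characterized by \emph{distinct} indices, not by any block symmetry, and it is disposed of by the combinatorics, not by a further symmetry argument. A secondary imprecision: the Leibniz/pigeonhole step you sketch distributes $\partial_{\alpha_j}$ over all $rm$ factors $(t_{i',s'}-\alpha_j)^{rn}$, but for $i'\notin\{i,j\}$ the resulting vanishing along $t_{i',s'}=\alpha_i$ is useless since $\varphi_{i',s',\cdot}$ evaluates at $\alpha_{i'}\neq\alpha_i$; only the $2r$ factors with $i'\in\{i,j\}$ interact with the evaluation, and even there the vanishing is exploited only once the primitivation index has been driven negative.
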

It is also easy to see that since all the variables $t_{i,s}$ have been specialized, $C_{u,m}\in K$ is a polynomial in the $\alpha_i$. The statement is then about a factorization of this polynomial. 
To prove of Proposition $\ref{decompose Cnum}$, we are going to perform the following steps~:
\begin{itemize}
\item[$(i)$] Show that $C_{u,m}$ is homogeneous of degree $m[r(u+1)+r^2n+{r\choose 2}]+{m\choose 2}(2n+1)r^2$.
\item[$(ii)$] Show that $\prod_{i=1}^m\alpha^{r(u+1)+r^2n+\binom{r}{2}}_i$ divides $C_{u,m}$.
\item[$(iii)$] Show that $\prod_{1\le i_1<i_2\le m}(\alpha_{i_2}-\alpha_{i_1})^{(2n+1)r^2}$ divides $C_{u,m}$.
\end{itemize}
We first prove $(i)$ and $(ii)$.
\begin{lemma} \label{homo}
$C_{u,m}$ is homogeneous of degree $m[r(u+1)+r^2n+{r\choose 2}]+{m\choose 2}(2n+1)r^2$ and is divisible by
$\prod_{i=1}^m\alpha^{r(u+1)+r^2n+\binom{r}{2}}_i$.
\end{lemma}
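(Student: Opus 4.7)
The plan is to verify both statements of the lemma by expanding $P_u(\boldsymbol{t})$ as a sum of monomials in the $t_{i,s}$ variables with coefficients in $\qu[\boldsymbol{\alpha}]$, applying $\psi$ termwise, and tracking degrees and $\alpha_i$-valuations combinatorially. No further machinery beyond the explicit definitions of $P_u$ and $\psi$ is required.

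For the homogeneity, I give each $t_{i,s}$ and each $\alpha_j$ weight $1$. Then $P_u$ is bihomogeneous of total degree
\[
D := rmu + r^2nm^2 + \binom{rm}{2},
\]
obtained by summing the contributions of $\prod_{i,s}t_{i,s}^u$, of $\prod_{i,s,j}(t_{i,s}-\alpha_j)^{rn}$, and of the Vandermonde $\prod_{(i_1,s_1)<(i_2,s_2)}(t_{i_2,s_2}-t_{i_1,s_1})$. Each $\varphi_{i,s,s}$ sends $t_{i,s}^k$ to a scalar multiple of $\alpha_i^{k+1}$, so applying the $rm$ operators comprising $\psi$ raises the total $\boldsymbol{\alpha}$-degree by exactly $rm$. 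Hence $C_{u,m}$ is $\boldsymbol{\alpha}$-homogeneous of degree $D+rm$, and the elementary identities $m\binom{r}{2}+r^2\binom{m}{2}=\binom{rm}{2}$ together with $r^2nm^2=mr^2n+2r^2n\binom{m}{2}$ show that this value coincides with $m[r(u+1)+r^2n+\binom{r}{2}]+\binom{m}{2}(2n+1)r^2$.

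For the divisibility I fix $i$ and bound the $\alpha_i$-valuation of each monomial contributing to $\psi(P_u)$. A typical term is parametrized by integers $a_{i',s',j}\in\{0,\ldots,rn\}$ recording the $t_{i',s'}$-exponent selected in $(t_{i',s'}-\alpha_j)^{rn}$, and by a permutation $\tau\in S_{rm}$ encoding a monomial of the Vandermonde determinant expansion $V=\sum_{\tau}\mathrm{sgn}(\tau)\prod_{i',s'}t_{i',s'}^{\tau(i',s')-1}$. The factors $(t_{j,s'}-\alpha_i)^{rn}$ contribute $\alpha_i^{rn-a_{j,s',i}}$, while $\varphi_{i,s,s}$ converts the total $t_{i,s}$-exponent $e_s=u+\sum_j a_{i,s,j}+(\tau(i,s)-1)$ into $\alpha_i^{e_s+1}$ up to a nonzero scalar. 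A telescoping in which the ``diagonal'' indices $a_{i,s,i}$ cancel yields, for every such term,
\begin{equation*}
E_i=r^2n+r(u+1)+\Bigl[(m-1)r^2n-\sum_{j\neq i,\,s'}a_{j,s',i}\Bigr]+\sum_{s}\sum_{k\neq i}a_{i,s,k}+\sum_{s}\bigl(\tau(i,s)-1\bigr).
\end{equation*}
The bracket is non-negative since every $a_{j,s',i}\le rn$, and the central double sum is manifestly non-negative. Finally $\tau(i,1),\ldots,\tau(i,r)$ are $r$ distinct elements of $\{1,\ldots,rm\}$, so their sum is at least $1+2+\cdots+r$, giving $\sum_s(\tau(i,s)-1)\ge\binom{r}{2}$. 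Adding these three estimates yields $E_i\ge r(u+1)+r^2n+\binom{r}{2}$ uniformly, which establishes the divisibility of $C_{u,m}$ by $\alpha_i^{r(u+1)+r^2n+\binom{r}{2}}$, and then by the full product $\prod_i\alpha_i^{r(u+1)+r^2n+\binom{r}{2}}$ since the $\alpha_i$ are distinct variables.

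The main subtlety is the origin of the $\binom{r}{2}$ contribution: it is not visible in any of the ``obvious'' factors $t_{i,s}^u$ or $(t_{i,s}-\alpha_i)^{rn}$, which between them yield only the $r(u+1)+r^2n$ portion of the exponent. It emerges solely from the Vandermonde structure in $t_{i,1},\ldots,t_{i,r}$ and surfaces only after $V$ is expanded as an alternating sum over $S_{rm}$, combined with the arithmetic observation that the image under any permutation of the $r$ indices $(i,1),\ldots,(i,r)$ is an $r$-subset of $\{1,\ldots,rm\}$ of sum at least $\binom{r+1}{2}$.
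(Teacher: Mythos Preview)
Your proof is correct. The homogeneity argument is essentially the paper's, carried out with the arithmetic identity $m\binom{r}{2}+r^2\binom{m}{2}=\binom{rm}{2}$ made explicit.

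For the divisibility, you take a genuinely different route. The paper performs the substitution $t_{i,s}\mapsto \alpha_i t_{i,s}$ and uses the identity $\varphi_{\alpha_i,x,s}(Q(t))=\alpha_i\,\varphi_{1,x,s}(Q(\alpha_i t))$ (Lemma~\ref{prelimi}(ii)) to factor $\prod_i\alpha_i^{r(u+1)+r^2n+\binom{r}{2}}$ out of $C_{u,m}$ in one stroke, obtaining an explicit quotient $\psi_{\boldsymbol{1}}(Q)$ for a concrete polynomial $Q$. You instead expand $P_u$ over binomial choices $a_{i',s',j}$ and permutations $\tau\in S_{rm}$, and bound the $\alpha_i$-valuation of every resulting monomial from below via the telescoping that cancels the diagonal $a_{i,s,i}$ together with the pigeonhole bound $\sum_s(\tau(i,s)-1)\geq\binom{r}{2}$. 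This is more laborious but entirely elementary and self-contained; it does not rely on the scaling identity for $\varphi$. The trade-off is that the paper's change of variables yields the explicit factorization $C_{u,m}=\prod_i\alpha_i^{r(u+1)+r^2n+\binom{r}{2}}\psi_{\boldsymbol{1}}(Q)$, which is then reused in the induction step of Lemma~\ref{reducc}; your argument proves divisibility without exhibiting the cofactor, so if you intend to continue towards Proposition~\ref{decompose Cnum} you would eventually need that explicit form.
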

\begin{proof} 
First the polynomial $P_{u}(\boldsymbol{t})$ is a homogeneous polynomial with respect to the variables $\alpha_i,t_{i,s}$ of degree $m[ru+r^2n+{r\choose 2}]+{m\choose 2}(2n+1)r^2$.
By the definition of $\psi$, it is {thus} easy to see that $C_{u,m}=\psi(P_{u}(\boldsymbol{t}))$ is a homogeneous polynomial with respect to the variables $\alpha_i$ of degree $m[r(u+1)+r^2n+{r\choose 2}]+{m\choose 2}(2n+1)r^2$.
Second we show the later assertion. By linear algebra $\varphi_{l}(Q(t))=\alpha \varphi_{1,x,l}(Q(\alpha t))$ ({\it  i.e.}  the variable $t$ specializes in $1$, {\it confer} Lemma~\ref{prelimi} (ii) below) for any integer $l$ and any polynomial $Q\in K[t]$. 

So, by composition, the same holds for $\psi$, and, putting $\boldsymbol{1}=(1,\ldots,1)$, {one gets}
$$C_{u,m}=\prod_{i=1}^m\alpha_i^r\cdot \bigcirc_{i=1}^m\left(\psi_{\boldsymbol{1}}(P_{u}(\alpha_i t_{i,s}))\right)\enspace.$$
We now compute
$$
P_{u}(\alpha_i t_{i,s})=\prod_{i=1}^m\alpha_i^{r^2n+ru+{r\choose 2}}\times  Q({\boldsymbol{t}})\enspace,
$$
where 
\begin{align*}
Q(\boldsymbol{t})=Q_{n,u,m}(\boldsymbol{t})&=\left( \prod_{i=1}^m \prod_{s=1}^r \left[t_{i,s}^u\prod_{j\neq i}(\alpha_it_{i,s}-\alpha_j)^{rn}(t_{i,s}-1)^{rn}\right]\right)\\
& \times \displaystyle{\prod_{1\le i_1<i_2\le m}\prod_{1\le s_1,s_2\le r}}(\alpha_{i_2}t_{i_2,s_2}-\alpha_{i_1}t_{i_1,s_1})\times \displaystyle{\prod_{i=1}^m\prod_{1\le s_1<s_2\le r}}(t_{i,s_2}-t_{i,s_1})\enspace,
\end{align*}
by linearity, 
\begin{equation} \label{equality 1} 
C_{u,m}=\prod_{i=1}^m\alpha_i^{r(u+1)+r^2n+{r\choose 2}}\left(\psi_{\boldsymbol{1}}\left(Q\right)\right)\enspace.
\end{equation}
This concludes the proof of the lemma. 
\end{proof}
{Now we consider $(iii)$.  Since the statement is trivial for $m=1$, we can assume $m\geq 2$. We need to show that $(\alpha_j-\alpha_i)^{(2n+1)r^2}$ divides $C_{u,m}$. Without loss of generality, after renumbering, we can assume that $j=2,i=1$.
 To ease notations, we are going to take advantage of the fact that $m\geq2$, and set $X_s=t_{1,s}, Y_s=t_{2,s}$ and $\alpha_1=\alpha$, $\alpha_2=\beta$.
So our polynomial $P_u$ rewrites as
\begin{align*}
P_{u}(\underline{X},\underline{Y})&=\prod_{s=1}^r\left[(X_sY_s)^u[(X_s-\alpha)(X_s-\beta)(Y_s-\alpha)(Y_s-\beta)]^{rn}\right]\\
& \times \prod_{1\leq i<j\leq r}(X_j-X_i)\prod_{1\leq i<j\leq r}(Y_j-Y_i)\prod_{1\leq i,j\leq r}(Y_j-X_i)
\\ & \times c(t_{i,s})_{i\geq 3}\prod_{k\geq 3}\prod_{s\geq 1}\prod_{1\leq i,j \leq r}(t_{k,s}-X_i)(t_{k,s}-Y_j)\enspace,
\end{align*}
where $c(t_{i,s}):=\prod_{i=3}^{m}\prod_{s=1}^r\left[ t_{i,s}^u\prod_{j=1}^m(t_{i,s}-\alpha_j)^{rn}\right]\prod_{(i_1,s_1)<(i_2,s_2),i_1,i_2\geq 3}(t_{i_2,s_2}-t_{i_1,s_1})$ (the precise value of $c$ does not actually matter as it is treated as a scalar by the operators $\varphi_{\alpha,s,l},\varphi_{\beta,s,l}$).

Similarly, the maps $\varphi_{\alpha_i,t_{i,s},x,l}$ rewrite as $\varphi_{\alpha,s,l},\varphi_{\beta,s,l}$ respectively (for $i=1,2$).
For any $\boldsymbol{l}=(l_1,\ldots,l_r)\in\Z^r$, we set $\psi_{\alpha,\boldsymbol{l}}:=
\bigcirc_{s=1}^r\varphi_{\alpha,s,l_s}$, and $\psi_{\beta,\boldsymbol{l}}:=
\bigcirc_{s=1}^r\varphi_{\beta,s,l_s}$ respectively.
Setting $\boldsymbol{k}=(1,2,\ldots,r)$
one has 
$$\psi=\psi_{\alpha,\boldsymbol{k}}\circ\psi_{\beta,\boldsymbol{k}}\circ\theta\enspace,$$
where $\theta=\bigcirc_{i\geq 3}\bigcirc_{s=1}^r\varphi_{\alpha_i,s,s}$. 

\vspace{\baselineskip}
{{We introduce a specialization morphism for the variable $\alpha$. Set 
$$\Delta=\Delta_{\alpha}: \Q[\alpha,1/\alpha, \beta, \ldots,\alpha_m] \longrightarrow \qu(\alpha); \ \ \Delta(P(\alpha,\beta,\ldots,\alpha_m))=P(\alpha,\ldots,\alpha)\enspace.$$}}
Note that $\theta$ and $\psi$ commute so, it is enough to prove that for $0\le l \le (2n+1)r^2-1$,  one has
{{$\frac{\partial^l}{\partial \alpha^l}\Delta \left(\psi_{\alpha,\boldsymbol{k}}\circ\psi_{\beta,\boldsymbol{k}}(P_u)\right)=0$}}.
We postpone the end of the proof of $(iii)$ and start with a few preliminaries.
We  set 
$$\begin{array}{lcl}\displaystyle
f(\underline{X},\underline{Y})=f(\alpha,\beta,\underline{X},\underline{Y}) & =  & \displaystyle c(t_{i,s})\prod_{k\geq 3}\prod_{s\geq 1}\prod_{1\leq i,j \leq r}(t_{k,s}-X_i)(t_{k,s}-Y_j)\\ & \times & \displaystyle \prod_{s=1}^r(X_sY_s)^u[(X_s-\alpha)(X_s-\beta)(Y_s-\alpha)(Y_s-\beta)]^{rn}\enspace,
\end{array}$$
and 
$$g(\underline{X},\underline{Y})=g(\alpha,\beta,\underline{X},\underline{Y})=\prod_{1\leq i,j\leq r}(X_i-Y_j)\prod_{1\leq i<j\leq r}[(X_j-X_i)(Y_j-Y_i)]\enspace.$$

So that $P_{u}=fg$ (for the rest of the proof, the indexes $u,n$ will not play any role and may be conveniently left off to ease reading).

We now concentrate on a few elementary properties of the maps $\varphi$ which we regroup here and will be useful for the rest~:
\begin{lemma}\label{prelimi}
\begin{itemize}
\item[$(i)$] The morphisms $\varphi_{\alpha,s,l},\varphi_{\beta,s,l}$ pairwise commute for $1\leq s\leq r$, $l\in \Z$.
\item[$(ii)$] For any $a\in L[t]$ and any $s\in \Z$, $\varphi_{s}(a(t))=\alpha\varphi_{1,x,s}(a(\alpha t)).$
\item[$(iii)$] The operator $\frac{\partial}{\partial \alpha}$ commutes with any $\varphi_{\beta,s,l}$ and hence with $\psi_{\beta,\boldsymbol{k}}$ and with $\theta$.
\item[$(iv)$] Define $\boldsymbol{1}_s=(0,\ldots,0,1,0,\ldots,0)$ where $1$ stands at the $s$-th place and zeros are elsewhere.
Then for $\boldsymbol{l}\in\Z^r$ and $f\in K[X_1,\ldots,X_r]$, we have $\frac{\partial}{\partial \alpha}\circ\psi_{\alpha,\boldsymbol{l}}(f)=\psi_{\alpha,\boldsymbol{l}}\circ\left(\frac{\partial}{\partial \alpha}-rx/\alpha\right)(f)+\sum_{s=1}^r\psi_{\alpha,\boldsymbol{l}-\mathbf{1}_s}(f/\alpha)$.
\end{itemize}
\end{lemma}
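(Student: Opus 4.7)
\medskip

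The plan is to prove the four items essentially by unraveling the definition \(\varphi_{\alpha,x,s}(t^k)=\alpha^{k+1}/(x+k+1)^s\) (valid for every \(s\in\Z\) by Facts~\ref{faitselem} (i)) and then verifying all four statements on monomials, using linearity to extend afterwards.

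For (i), I would simply observe that each \(\varphi_{\alpha,s,l}\) is obtained from \(\varphi_{\alpha_1,t_{1,s},x,l}\) and acts non-trivially only on the variable \(t_{1,s}=X_s\), leaving the \(t_{1,s'}\) with \(s'\ne s\) and all the \(t_{2,s'}=Y_{s'}\) untouched, and symmetrically for \(\varphi_{\beta,s',l'}\); since all the variables involved are pairwise distinct, the two operators act on ``independent coordinates'' and commute. Similarly for two \(\varphi_{\alpha,s,l}\) attached to different \(s\). For (iii), I note that \(\varphi_{\beta,s,l}\) is a composition of an iterated formal primitive with respect to \(t_{2,s}\) and evaluation \(\ev_{\beta}\) followed by multiplication by \(\beta\); none of these operations involves the variable \(\alpha\), hence they commute with \(\partial/\partial\alpha\). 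Commutation with \(\psi_{\beta,\boldsymbol{k}}\) follows by composition, and commutation with \(\theta\) is identical since \(\theta\) is built from \(\varphi_{\alpha_i,\cdot,\cdot}\) with \(i\ge 3\).

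For (ii), I would check the identity on the basis of monomials and extend by \(L\)-linearity (plus continuity when \(a\in L[[t]]\), using Facts~\ref{faitselem} (iv)). Namely, with \(a(t)=t^k\),
\[
\varphi_{\alpha,x,s}(t^k)=\frac{\alpha^{k+1}}{(x+k+1)^s}=\alpha\cdot\frac{\alpha^k}{(x+k+1)^s}=\alpha\,\varphi_{1,x,s}(\alpha^k t^k)=\alpha\,\varphi_{1,x,s}(a(\alpha t)),
\]
which is the desired equality. For negative \(s\) the same identity holds since both sides are characterized by the analogous formula involving \((x+k+1)^s\) with \(s<0\), thanks to the fact that \(\inte\) is an isomorphism (Facts~\ref{faitselem} (i)).

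The main work is (iv), where the delicate point is to keep track of the factor \(\alpha\) built into every \(\varphi_{\alpha,s,l_s}=[\alpha]\circ\ev_{\alpha}\circ\inte^{(l_s)}_x\) when differentiating. I would again reduce to monomials: by linearity it suffices to verify (iv) for \(f=\alpha^{a}\prod_{s=1}^{r}X_s^{k_s}\). For such an \(f\) one computes directly
\[
\psi_{\alpha,\boldsymbol{l}}(f)=\alpha^{a}\prod_{s=1}^{r}\frac{\alpha^{k_s+1}}{(x+k_s+1)^{l_s}},
\qquad
\frac{\partial}{\partial\alpha}\psi_{\alpha,\boldsymbol{l}}(f)=\frac{a+\sum_{s=1}^{r}(k_s+1)}{\alpha}\,\psi_{\alpha,\boldsymbol{l}}(f).
\]
On the other hand
\[
\psi_{\alpha,\boldsymbol{l}}\!\left(\bigl(\tfrac{\partial}{\partial\alpha}-\tfrac{rx}{\alpha}\bigr)f\right)=\frac{a-rx}{\alpha}\,\psi_{\alpha,\boldsymbol{l}}(f),
\qquad
\psi_{\alpha,\boldsymbol{l}-\boldsymbol{1}_s}(f/\alpha)=\frac{x+k_s+1}{\alpha}\,\psi_{\alpha,\boldsymbol{l}}(f),
\]
the second identity reflecting that decreasing \(l_s\) by one multiplies the \(s\)-th factor by \((x+k_s+1)\). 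Summing the right-hand side contributions gives \(\alpha^{-1}\bigl(a-rx+\sum_{s}(x+k_s+1)\bigr)\psi_{\alpha,\boldsymbol{l}}(f)\), and the two \(rx\) terms cancel, recovering the left-hand side. The obstacle here is purely bookkeeping: one must not confuse the external \([\alpha]\) with derivation in \(\alpha\), and must correctly identify the shift \(\boldsymbol{l}\mapsto\boldsymbol{l}-\boldsymbol{1}_s\) as the effect of suppressing one factor \((x+k_s+1)^{-1}\). Once verified on monomials the identity extends by \(\Q\)-linearity to all \(f\in K[\alpha,X_1,\ldots,X_r]\), which is all we shall need in the subsequent application where \(f\) will be replaced by \(\Delta_{\alpha}(P_u)\) or an intermediate specialization.
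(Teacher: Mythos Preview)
Your proposal is correct and follows essentially the same approach as the paper: items (i)--(iii) are dispatched by observing that the operators act on disjoint variables (or do not involve $\alpha$), and item (iv) is verified on monomials by computing $\psi_{\alpha,\boldsymbol{l}}$ explicitly and matching the $rx$ cancellation. The only cosmetic difference is that the paper writes $f=\sum_{\underline{i}}a_{\underline{i}}(\alpha)\prod_j X_j^{i_j}$ with arbitrary $\alpha$-dependent coefficients and then reduces the crucial identity to pure monomials $\prod_j X_j^{i_j}$, whereas you take $f=\alpha^a\prod_s X_s^{k_s}$ from the start; both routes amount to the same computation.
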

\begin{proof} $(i)$ follows from the definition since both multiplication by a scalar, specialization of one variable or integration with respect to a given variable all pairwise commute, $(iii)$ follows from commutation of integrals with respect to a parameter with differentiation with respect to that parameter.

We turn to $(ii)$. Take $k\geq 0$, then  $\varphi_{s}(t^k)=[\alpha]\circ\ev_{\alpha}\circ\inte^{(s)}(t^k)=[\alpha]\circ\ev_\alpha\left(\frac{t^{k}}{(k+x+1)^s}\right)=\frac{\alpha^{k+1}}{(k+x+1)^s}$. 
Similarly, $\alpha \varphi_{1,x,s}((\alpha t)^k)=\frac{\alpha^{k+1}}{(k+x+1)^s}$. Since both coincide, for the basis $\{t^k\}_{k\in \N}$, they coincide on $L[t]$ by linearity.

Finally, we prove $(iv)$. 
Let $f(X_1,\ldots, X_r)=\sum_{\underline{i}}a_{\underline{i}}(\alpha)\prod_{j=1}^{r}X_{j}^{i_j}\in K[X_1,\ldots, X_r]$.
Then  by definition, 
$\psi_{\alpha,\boldsymbol{l}}(f)=\sum_{\underline{i}}a_{\underline{i}}(\alpha)\frac{\alpha^{\vert \underline{i}\vert +r}}{\prod_{j=1}^r(i_j+x+1)^{l_j}}$.
Here, $\vert \underline{i}\vert=\sum_{j=1}^ri_j$. 
Then
$$\frac{\partial}{\partial \alpha}(\psi_{\alpha,\boldsymbol{l}}(f))=\sum_{\underline{i}}\frac{\partial}{\partial \alpha}(a_{\underline{i}}(\alpha))\frac{\alpha^{\vert \underline{i}\vert +r}}{\prod_{j=1}^r(i_j+x+1)^{l_j}}+ \sum_{\underline{i}}a_{\underline{i}}(\alpha)(\vert \underline{i}\vert+r)\frac{\alpha^{\vert\underline{i}\vert +r-1}}{\prod_{j=1}^r(i_j+x+1)^{l_j}}\enspace.$$

First term in the sum is easily seen to be equal to $\psi_{\alpha,\boldsymbol{l}}(\frac{d}{d \alpha}(f))$.
So we look at
$$\sum_{\underline{i}}a_{\underline{i}}(\alpha)(\vert \underline{i}\vert+r)\frac{\alpha^{\vert\underline{i}\vert +r-1}}{\prod_{j=1}^r(i_j+x+1)^{l_j}}
=\sum_{\underline{i}}\frac{a_{\underline{i}}(\alpha)}{\alpha}(\vert \underline{i}\vert+r)\frac{\alpha^{\vert\underline{i}\vert +r}}{\prod_{j=1}^r(i_j+x+1)^{l_j}}\enspace.$$

So the claim $(iv)$ reduces to the statement
$$\left(\sum_{s=1}^r\psi_{\alpha,\boldsymbol{l}-\mathbf{1}_s}-rx\psi_{\alpha,\boldsymbol{l}}\right)(X_1^{i_1}\ldots X_r^{i_r})=
(\vert \underline{i}\vert+r)\frac{\alpha^{\vert\underline{i}\vert +r}}{\prod_{j=1}^r(i_j+x+1)^{l_j}}\enspace.$$

But left hand side is 
\begin{align*}
\left(\sum_{s=1}^r\psi_{\alpha,\boldsymbol{l}-\boldsymbol{1}_s}-rx\psi_{\alpha,\boldsymbol{l}}\right)(X_1^{i_1}\ldots X_r^{i_r})&=
\sum_{s=1}^r\frac{(i_s+x+1)\alpha^{\vert \underline i\vert +r}}{\prod_{j=1}^r(i_j+x+1)^{l_j}}-rx\frac{\alpha^{\vert\underline{i}\vert +r}}{\prod_{j=1}^r(i_j+x+1)^{l_j}}\\
&=(\vert \underline{i}\vert+r)\frac{\alpha^{\vert\underline{i}\vert +r}}{\prod_{j=1}^r(i_j+x+1)^{l_j}}\enspace.
\end{align*}
\end{proof}
This property will be useful to conclude the proof of the proposition. Indeed, to prove that $C_{u,m}$ is divisible by a high power of $(\alpha-\beta)$, it is enough to show that the 
$\frac{\partial^l}{\partial\alpha^l}(C_{u,m})$ vanishes for many consecutive values of $l$ and since differentiation by $\alpha$ and evaluation at $\alpha$ do not commute properly, it is useful to control the defect of commutativity which is precisely what property (iv) provides for.

\bigskip

Now, let us compute what comes out by iteration of property (iv) of Lemma~\ref{prelimi}. 
Let $l,k\in \Z_{\ge0}$ with $l\ge k$. 
We define a set of differential operators 
$$\mathcal{X}_{l,k}:=\left\{V=\partial_1\circ \ldots \circ \partial_l\mid \partial_i\in \left\{1/\alpha, \tfrac{\partial}{\partial \alpha}-rx/\alpha\right\}, \ \left|\{1\le i \le l , \partial_i=1/\alpha\}\right|=k\right\}\enspace.$$
One gets that 
$$\frac{\partial^l}{\partial\alpha^l}(\psi_{\alpha,\boldsymbol{k}}(P))=\sum_{\substack{\boldsymbol{I}\in \Z^r_{\ge0} \\ \vert \boldsymbol{I}\vert\leq l}}\sum_{V\in \mathcal{X}_{l,\vert \bold{I} \vert}} \psi_{\alpha,\boldsymbol{k}-\boldsymbol{I}}(V(P))\enspace.$$

By the Leibniz formula, for $V\in \mathcal{X}_{l,\vert \bold{I}\vert }$, $V(P)$ is a linear combination of the derivatives 
$\frac{\partial^j}{\partial{\alpha^j}}(P)$ for $0\leq j\leq l-\vert \mathbf{I}\vert$.
Hence, 
setting $P=fg$, it is a linear combination of $g\frac{\partial^j}{\partial{\alpha^j}}(f)$, for $0\leq j\leq l-\vert\mathbf{I}\vert$.
We now pursue the study of the above equation, in the special case where $P=fg$ and perform combinatorics argument~:

\begin{lemma} \label{condicombi}
Let $0\leq l$ be an integer and $\boldsymbol{I}=(a_1,\ldots,a_r)\in \N^r$ such that $\vert \boldsymbol{I}\vert \leq l$.
Assume further either of these two to be true  

\begin{itemize}\item[$(i)$] The  $2r$ dimensional vector $(\boldsymbol{k},\boldsymbol{k}-\boldsymbol{I})$ has two coordinates in common.
\item[$(ii)$] There exists an index $1\leq s\leq r$ such that $0\leq a_s-s< 2rn-l+\vert\boldsymbol{I}\vert$.
\end{itemize}
Then, $\Delta_{\alpha}\circ \psi_{\beta,\boldsymbol{k}}\circ\psi_{\alpha,\boldsymbol{k}-\boldsymbol{I}}(g\frac{\partial^jf}{\partial\alpha^j})=0$ for all $0\leq j\leq l-\vert\mathbf{I}\vert$.

{{Moreover, the smallest integer $l$ for which there exists $\boldsymbol{I}$ with $\vert\boldsymbol{I}\vert\leq l$ with none of the conditions of $(i)$ and $(ii)$ are satisfied is $(2n+1)r^2$.}} 
\end{lemma}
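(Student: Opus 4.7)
The plan is to handle the two sufficient conditions (i) and (ii) separately: case (i) will reduce to the antisymmetry lemmas already available (Lemma~\ref{symetries} and Lemma~\ref{anti}), while case (ii) will rest on a Leibniz expansion of $\partial^j f/\partial\alpha^j$ combined with Lemma~\ref{racinesymetrique}.

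For (i), I first observe that $\boldsymbol{k}=(1,2,\ldots,r)$ has pairwise distinct coordinates, so the two coincident entries of the $2r$-tuple $(\boldsymbol{k},\boldsymbol{k}-\boldsymbol{I})$ either both lie in $\boldsymbol{k}-\boldsymbol{I}$, or one lies in each of the two tuples. Since $g$ is (up to a sign) the Vandermonde in the $2r$ variables $X_1,\ldots,X_r,Y_1,\ldots,Y_r$, it is antisymmetric under their full symmetric group, whereas $f$ is invariant under that same group, and this invariance is preserved by $\partial^j/\partial\alpha^j$ because $\alpha$ is independent of the $X_s,Y_s$. Hence $g\,\partial^j f/\partial\alpha^j$ is antisymmetric and Lemma~\ref{symetries} yields the vanishing: its second clause when both coincident indices live within $\boldsymbol{k}-\boldsymbol{I}$, its first clause otherwise.

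For (ii), set $L:=a_s-s\geq 0$ and factor $\psi_{\alpha,\boldsymbol{k}-\boldsymbol{I}}=\psi^{(\neq s)}\circ\varphi_{\alpha,s,-L}$, where $\psi^{(\neq s)}:=\bigcirc_{s'\neq s}\varphi_{\alpha,s',s'-a_{s'}}$. Using the pairwise commutation in Lemma~\ref{prelimi}~(i), I would push $\varphi_{\alpha,s,-L}$ past $\psi_{\beta,\boldsymbol{k}}$ to the innermost position, and commute $\Delta_\alpha$ past $\psi^{(\neq s)}$ and $\psi_{\beta,\boldsymbol{k}}$ (harmlessly relabeling $\beta\mapsto\alpha$ in the latter) until it sits immediately outside $\varphi_{\alpha,s,-L}$. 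It thus suffices to prove $\Delta_\alpha\circ\varphi_{\alpha,s,-L}(g\,\partial^j f/\partial\alpha^j)=0$. Since the only $\alpha$-dependent factors of $f$ are the $(X_{s'}-\alpha)^{rn}$ and $(Y_{s'}-\alpha)^{rn}$, the Leibniz expansion of $\partial^j f/\partial\alpha^j$ produces a sum of summands each of which carries a factor $(X_s-\alpha)^{rn-j_s^X}(X_s-\beta)^{rn}$ with $0\leq j_s^X\leq\min(j,rn)$, the remaining $X_s$-dependence introducing no further $(X_s-\alpha)$ or $(X_s-\beta)$ root. Applying Lemma~\ref{racinesymetrique} with $T_1=rn-j_s^X$ and $T_2=rn$, the required hypothesis $L\leq T_1+T_2-1$ reads $L<2rn-j_s^X$ and follows from the chain
$$L<2rn-l+|\boldsymbol{I}|\leq 2rn-j\leq 2rn-j_s^X,$$
where the middle inequality uses $j\leq l-|\boldsymbol{I}|$. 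Each summand is thus annihilated and the whole sum vanishes.

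The main obstacle is the interaction between $\partial^j/\partial\alpha^j$ and the specialization $\Delta_\alpha$, which do not commute and cannot simply be swapped. The proof must therefore first carry out the Leibniz expansion with $\beta$ still free, exhibit the two distinct factors $(X_s-\alpha)^{rn-j_s^X}$ and $(X_s-\beta)^{rn}$ explicitly, and only then invoke $\Delta_\alpha$, whose effect is to merge them into $(X_s-\alpha)^{2rn-j_s^X}$. It is exactly this doubling that leaves enough vanishing at $X_s=\alpha$ to absorb up to $j\leq l-|\boldsymbol{I}|$ lost differentiations together with the $L$ applications of the derivation $\deri$ inside $\varphi_{\alpha,s,-L}$, and hence it is this doubling that produces the critical threshold $2rn-l+|\boldsymbol{I}|$ appearing in condition (ii).
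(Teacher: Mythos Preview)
Your proof is correct and follows the same route as the paper: case (i) is handled by the antisymmetry of $g\,\partial^jf/\partial\alpha^j$ together with Lemma~\ref{symetries}, and case (ii) by isolating $\varphi_{\alpha,s,-L}$, commuting $\Delta_\alpha$ inward, and applying Lemma~\ref{racinesymetrique} after a Leibniz count; your chain of inequalities $L<2rn-l+|\boldsymbol{I}|\leq 2rn-j\leq 2rn-j_s^X$ is exactly the paper's argument spelled out. One small inaccuracy worth flagging: when $m\geq 3$ the factor $c(t_{i,s})$ inside $f$ also depends on $\alpha$ through $(t_{i,s}-\alpha)^{rn}$ for $i\geq 3$, so your sentence ``the only $\alpha$-dependent factors of $f$ are the $(X_{s'}-\alpha)^{rn}$ and $(Y_{s'}-\alpha)^{rn}$'' is not literally true --- but this is harmless, since the only fact you actually use is that each Leibniz summand still carries $(X_s-\alpha)^{rn-j_s^X}(X_s-\beta)^{rn}$ with $j_s^X\leq j$, and that holds regardless of what other $\alpha$-dependent factors are present.
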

\begin{proof} 
Assume first condition is satisfied and there exist $1\le i,j\le r$ with $k_i=k_j-a_j$.                    
Let $\tau$ be the transposition $\tau(X_i)=Y_j$, $\tau(Y_j)=X_i$ leaving all the other variables invariant and let $\tau$ act on the same way on $(\boldsymbol{k},\boldsymbol{k}-\boldsymbol{I})$ {\it i.e.} $\tau(k_i)=k_j-a_j$, $\tau(k_j-a_j)=k_i$ and all other coordinates invariant. 
Then $\tau$ acts on $L[\underline{X},\underline{Y}]$ by permutation of the variables. Then, by antisymmetry of $g$, we have $\tau(g\frac{\partial^jf}{\partial\alpha^j})=-g\frac{\partial^jf}{\partial\alpha^j}$. 
We compute
\begin{align*}
\Delta_{\alpha}\circ\psi_{\alpha,\boldsymbol{k}}\circ\psi_{\beta,\boldsymbol{k}-\boldsymbol{I}}(g\tfrac{\partial^jf}{\partial\alpha^j})&=\Delta_{\alpha}
\circ \psi_{\alpha,\tau(\boldsymbol{k})}\circ\psi_{\beta,\tau(\boldsymbol{k}-\boldsymbol{I})}(\tau(g\tfrac{\partial^jf}{\partial\alpha^j}))\\
&=\Delta_{\alpha}\circ\psi_{\alpha,\boldsymbol{k}}\circ\psi_{\beta,\boldsymbol{k}-\boldsymbol{I}}(\tau(g\tfrac{\partial^jf}{\partial\alpha^j}))
=-\Delta_{\alpha}\circ\psi_{\alpha,\boldsymbol{k}}\circ \psi_{\beta,\boldsymbol{k}-\boldsymbol{I}}(g\tfrac{\partial^jf}{\partial\alpha^j})\enspace.
\end{align*}

Similarly, if $\boldsymbol{k}$ {or $\boldsymbol{k}-\boldsymbol{I}$} has at least two equal coordinates, we have 
{{$\Delta_{\alpha}\circ \psi_{\beta,\boldsymbol{k}}\circ\psi_{\alpha,\boldsymbol{k}-\boldsymbol{I}}(g\frac{\partial^jf}{\partial\alpha^j})=0.$}}

\bigskip

If the second condition is satisfied, then $\Delta_{\alpha}\circ\varphi_{\alpha,s,s-a_s}(V fg)$ itself vanishes 
for $V\in \mathcal{X}_{l,\vert \boldsymbol{I} \vert}$
since $\left[(X_s-\alpha)(X_s-\beta)\right]^{rn}\mid f$ (so $\frac{\partial^j}{\partial\alpha^j}(fg)$ vanishes at $\alpha=\beta$ at order at least 
$2nr-j\geq 2nr-l+\vert\boldsymbol{I}\vert>a_s-s$).

\bigskip 

Assume conditions $(i)$ and  $(ii)$ are false, then the set $\{a_s-s\}$, {after reordering its elements by increasing order is at least
$\{2rn-l+\vert\boldsymbol{I}\vert;\ldots;2rn-l+\vert\boldsymbol{I}\vert+r-1\}$ in the lexicographic order} and $\sum_{s=1}^r(a_s-s)\geq  r(2rn-l+\vert\boldsymbol{I}\vert)+r(r-1)/2$, that is 
$\vert \boldsymbol{I}\vert+r(l-\vert\boldsymbol{I}\vert)\geq2 r^2n+r^2$. 
Since $l-\vert\boldsymbol{I}\vert\geq 0$, the lemma follows.
\end{proof}

{\noindent \bf End of the proof of $(iii)$.}
By symmetry in the $\alpha_i$,
we need only to show that $(\alpha-\beta)^{(2n+1)r^2}\mid C_{u,m}$. But , since $\theta$ and $\frac{d}{d\alpha}$ commute, the {{later assertion of}} Lemma~\ref{condicombi} ensures that 
$$\left.\frac{\partial^l}{\partial\alpha^l}(C_{u,m})\right|_{\alpha=\beta}=0 \hspace{15pt} \text{for all} \ 0\leq l\leq (2n+1)r^2-1\enspace. \qed$$
}

\subsubsection{Last step}
\label{laststep}
We shall reduce by induction the non-vanishing of $c_{n,u,m}$ to the non-vanishing of $c_{n,u,0}$ (which is obviously equal to 1) and the non-vanishing of a real integral and first prove,
\begin{lemma}\label{reducc} 
Set $\displaystyle A(\boldsymbol{t})=\kern-3pt{\prod_{s=1}^{r}}\kern-3pt \left[t^{u}_{m,s}\kern-2pt \cdot\kern-2pt (t_{m,s}-1)^{rn}\right] \times\kern-10pt\prod_{1\le s<s'\le r}\kern-10pt(t_{m,s}-t_{m,s'})$ and $\theta_m:=\bigcirc_{1\leq s\leq r}\varphi_{1, t_{m,s},x,s} $. Then, 
$$c_{n,u,m}=(-1)^{r^2n(m-1)}c_{n,u+r(n+1),m-1}\cdot\theta_m\left(A(\boldsymbol{t}))\right)\enspace.$$
\end{lemma}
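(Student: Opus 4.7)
The plan is to isolate the $i=m$ block of the operator $\psi$ and reduce it to the $(m-1)$-case via the substitution argument embodied in Lemma~\ref{prelimi}\,(ii).

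First, I would apply Lemma~\ref{prelimi}\,(ii) to each of the $r$ operators $\varphi_{\alpha_m, t_{m,s}, x, s}$ that make up the last block of $\psi$, extracting a factor $\alpha_m^r$ and replacing the $t_{m,s}$'s by $\alpha_m t_{m,s}$ inside $P_u$. A careful expansion of $P_u|_{t_{m,s}\to \alpha_m t_{m,s}}$ then shows that the contributions $t_{m,s}^u \to \alpha_m^u t_{m,s}^u$, $(t_{m,s}-\alpha_m)^{rn}\to \alpha_m^{rn}(t_{m,s}-1)^{rn}$, and the internal Vandermonde $\prod_{s_1<s_2}(t_{m,s_2}-t_{m,s_1})\to\alpha_m^{\binom{r}{2}}\prod_{s_1<s_2}(t_{m,s_2}-t_{m,s_1})$ combine into the prefactor $\alpha_m^{ru+r^2n+\binom{r}{2}}$ times $A(\boldsymbol{t})$ (up to a sign $(-1)^{\binom{r}{2}}$ coming from the orientation of the Vandermonde in $P_u$ versus $A$), together with $P_u^{(<m)}(\boldsymbol{t}_{<m})$ (the part of $P_u$ supported on the variables $t_{i,s}$ with $i<m$, which still depends on $\alpha_m$ through the $(t_{i,s}-\alpha_m)^{rn}$ factors) and the cross remainder $B(\alpha_m,\boldsymbol{t})=\prod_s\prod_{j<m}(\alpha_m t_{m,s}-\alpha_j)^{rn}\cdot\prod_{i_1<m,\,s_1,\,s_2}(\alpha_m t_{m,s_2}-t_{i_1,s_1})$.

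Next, using that $\theta_m$ acts only on $t_{m,\cdot}$ while $\psi_{<m}$ acts on $t_{i,\cdot}$ with $i<m$ (so they commute), and that $A(\boldsymbol{t})$ is constant relative to $\psi_{<m}$, I obtain
\[
C_{n,u,m}=(-1)^{\binom{r}{2}}\alpha_m^{r(u+1)+r^2n+\binom{r}{2}}\,\theta_m\!\Bigl(A(\boldsymbol{t})\cdot\psi_{<m}\bigl(P_u^{(<m)}\cdot B\bigr)\Bigr).
\]
The crux of the proof is then to identify $\psi_{<m}(P_u^{(<m)}\cdot B)$ with an expression that simultaneously exposes the Vandermonde factor $\prod_{i<m}(\alpha_m-\alpha_i)^{(2n+1)r^2}$ and, after applying Proposition~\ref{decompose Cnum} to the $(m-1)$-case, the number $c_{n,u+r(n+1),m-1}$ multiplied by $\prod_{i<m}\alpha_i^{r(u+1)+r^2n+\binom{r}{2}}\prod_{i<j<m}(\alpha_j-\alpha_i)^{(2n+1)r^2}$.

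The hard part is this identification: for each $(i,s)$ with $i<m$, the factor $(t_{i,s}-\alpha_m)^{rn}$ coming from $P_u^{(<m)}$, together with the contribution $\prod_{s_2}(\alpha_m t_{m,s_2}-t_{i,s})$ extracted from $B$, gives a polynomial of degree exactly $r(n+1)$ in $t_{i,s}$ and of degree $r(n+1)$ in $\alpha_m$; this is precisely what is needed to shift the exponent of $t_{i,s}$ from $u$ to $u+r(n+1)$ in the $(m-1)$-analogue $Q_{n,u+r(n+1),m-1}$. Extracting the coefficient of the top power $\alpha_m^{r(u+1)+r^2n+\binom{r}{2}+(2n+1)r^2(m-1)}$ on both sides, using the factorizations of $C_{n,u,m}$ and $C_{n,u+r(n+1),m-1}$ furnished by Proposition~\ref{decompose Cnum}, and collecting the sign $(-1)^{rn}$ produced by each of the $(m-1)r$ factors $(t_{i,s}-\alpha_m)^{rn}$ — for a total sign $(-1)^{r^2n(m-1)}$ — delivers the claimed identity, with the $\theta_m(A(\boldsymbol{t}))$ factor emerging as the image under $\theta_m$ of the $A$ that was already extracted in the substitution step.
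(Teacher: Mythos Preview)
Your substitution step and bookkeeping of the $\alpha_m^{r(u+1)+r^2n+\binom{r}{2}}$ prefactor are fine, but the extraction step is wrong and does not produce the recursion stated in the lemma. When you take the \emph{leading} coefficient in $\alpha_m$ of the factor $(t_{i,s}-\alpha_m)^{rn}\prod_{s_2}(\alpha_m t_{m,s_2}-t_{i,s})$, you get $(-1)^{rn}\prod_{s_2}t_{m,s_2}$, which is \emph{constant in $t_{i,s}$}: the $t_{i,s}$--dependence of these factors is killed, not promoted to $t_{i,s}^{r(n+1)}$. Likewise the leading part of $\prod_s\prod_{j<m}(\alpha_m t_{m,s}-\alpha_j)^{rn}$ contributes $\prod_s t_{m,s}^{(m-1)rn}$. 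Collecting, the top coefficient of $P_u^{(<m)}B$ in $\alpha_m$ equals $(-1)^{(m-1)r^2n}\bigl(\prod_s t_{m,s}^{(m-1)r(n+1)}\bigr)\,P_{u,m-1}$, so your procedure yields
\[
c_{n,u,m}=\pm\, c_{n,u,m-1}\cdot\theta_m\!\Bigl(A(\boldsymbol{t})\prod_{s}t_{m,s}^{(m-1)r(n+1)}\Bigr),
\]
that is, the shift $u\to u+(m-1)r(n+1)$ lands inside the $\theta_m$ factor, while the $(m-1)$--constant stays at $c_{n,u,m-1}$ --- the opposite of what the lemma asserts.

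The paper avoids this by working with $D_{n,u,m}=\psi_{\boldsymbol{1}}(Q_{n,u,m})$ from Lemma~\ref{homo} and \emph{specializing $\alpha_m=0$} rather than extracting the top coefficient. At $\alpha_m=0$ one has $(t_{i,s}-\alpha_m)^{rn}\to t_{i,s}^{rn}$ and $(\alpha_m t_{m,s_2}-t_{i_1,s_1})\to -t_{i_1,s_1}$, so the extra powers $t_{i,s}^{r(n+1)}$ genuinely appear in the $i<m$ block and turn $Q_{n,u,m-1}$ into $Q_{n,u+r(n+1),m-1}$, while $A(\boldsymbol{t})$ is left untouched. Your argument is easily repaired by making exactly this change (take the constant term in $\alpha_m$ after dividing out the prefactor, i.e.\ set $\alpha_m=0$); the ``top--coefficient'' route, as written, proves a different identity.
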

\begin{proof} 
Set $\Theta=\bigcirc_{i=1}^{m-1}\bigcirc_{1\leq i\leq m-1}\bigcirc_{1\leq s\leq r}\varphi_{1,t_{i,s},x,s}$ so that $\psi_{\bold{1}}=\Theta\circ\theta_m$ and
recall that  by $(\ref{equality 1})$,
$$ \label{deco Cnum2} 
D_{n,u,m}:=\dfrac{C_{n,u,m}}{\prod_{i=1}^m\alpha^{r(u+1)+r^2n+\binom{r}{2}}_i}=c_{n,u,m}\prod_{1\le i<j\le m}(\alpha_{j}-\alpha_{i})^{(2n+1)r^2}=\psi_{\boldsymbol{1}}\left(Q_{n,u,m}\right)\enspace.
$$
We are going to evaluate $D_{n,u,m}$ at  $\alpha_m=0$  and thus separate the variables in $Q_{n,u,m}$ first. By definition, one has
\begin{equation*}
Q_{n,u,m}(\boldsymbol{t})=Q_{n,u,m-1}(\boldsymbol{t})\cdot A(\boldsymbol{t})B(\boldsymbol{t})\enspace,
\end{equation*} 
where 
$$B(\boldsymbol{t})=\prod_{s=1}^{r}\prod_{j\neq m}(\alpha_mt_{m,s}-\alpha_j)^{rn}\times
\prod_{i=1}^{m-1} \prod_{s=1}^r (\alpha_it_{i,s}-\alpha_m)^{rn}
\times  {\prod_{1\le i< m}\prod_{1\le s,s'\le r}}\kern-8pt(\alpha_{m}t_{m,s'}-\alpha_{i}t_{i,s})\enspace.
$$
Note that $Q_{n,u,m-1}, A$ do not depend on $\alpha_m$, and $\psi_{\boldsymbol{1}}$ treats $\alpha_m$ as a scalar.
Hence,
\begin{align} \label{compare 1}
\displaystyle  {D_{n,u,m}}_{\vert \alpha_m=0} &= \displaystyle c_{n,u,m}\prod_{i=1}^{m-1}(-\alpha_i)^{(2n+1)r^2}\prod_{1\le i<j\le m-1}(\alpha_{j}-\alpha_{i})^{(2n+1)r^2}\enspace\\
&=\psi_{\boldsymbol{1}}\left(Q_{n,u,m-1}(\boldsymbol{t})A(\boldsymbol{t})B(\boldsymbol{t})_{\vert \alpha_m=0}\right)\enspace. \nonumber
\end{align}
But 
$$B(\boldsymbol{t})_{\vert\alpha_m=0}\kern-1pt=\kern-3pt\prod_{j=1}^{m-1}\kern-2pt(-\alpha_j)^{r^2n}\kern-3pt\prod_{i=1}^{m-1}\kern-2pt\prod_{s=1}^r(\alpha_it_{i,s})^{rn\kern-4pt}\prod_{i=1}^{m-1}\kern-3pt\prod_{s=1}^{r}(\kern-1pt-\alpha_it_{i,s})^r\kern-2pt=\kern-2pt(-1)^{r^2(m-1)(n+1)}\kern-3pt\prod_{i=1}^{m-1}\alpha_i^{(2n+1)r^2}\kern-2pt\prod_{i=1}^{m-1}\kern-3pt\prod_{s=1}^rt_{i,s}^{r(n+1)}\enspace\kern-3pt.$$
We now note  that $\theta_{m}$ treats the variables $t_{i,s}, 1\leq i\leq m-1$ as scalars and $\Theta$ treats variables $t_{m,s}$ as scalars and remark $$Q_{n,u,m-1}(\boldsymbol{t})B(\boldsymbol{t})_{\vert \alpha_m=0}=(-1)^{r^2(m-1)(n+1)}\prod_{i=1}^{m-1}\alpha_i^{(2n+1)r^2}Q_{n,u+r(n+1),m-1}{{(\boldsymbol{t})}}\enspace.$$
Thus
$$\psi_{\boldsymbol{1}}\left(Q_{n,u,m-1}(\boldsymbol{t})A(\boldsymbol{t})B(\boldsymbol{t})_{\vert \alpha_m=0}\right)
=(-1)^{r^2(m-1)(n+1)}\prod_{i=1}^{m-1}\alpha_i^{(2n+1)r^2}\Theta(Q_{n,u+r(n+1),m-1}(\boldsymbol{t}))\theta_m(A(\boldsymbol{t}))\enspace.$$
Using the relation~(\ref{compare 1}), taking into account $D_{n,u+r(n+1),m-1}=\psi_{\boldsymbol{1}}(Q_{n,u+r(n+1),m-1}(\boldsymbol{t}))$ and simplifying,
$$c_{n,u,m}=
(-1)^{r^2n(m-1)}c_{n,u+r(n+1),m-1}\cdot \theta_m(A(\boldsymbol{t}))\enspace.$$
\end{proof}

\begin{lemma} \label{last lemma}
Let $u,n,r$ be positive integers, put $D=[0,1]^r$.
Then we have
$$
\theta_m(A(\boldsymbol{t}))=
\prod_{s^{\prime}=1}^{r}\dfrac{1}{(s^{\prime}-1)!}\int_{D}{\prod_{s=1}^{r}} \left[t^{u+x}_{s} (t_{s}-1)^{rn}{\rm{log}}^{s-1}\dfrac{1}{t_{s}}\right] \cdot \prod_{1\le s_{1}<s_{2}\le r}(t_{s_{2}}-t_{s_{1}})\prod_{s=1}^{r}dt_{s}\neq 0\enspace. 
$$
\end{lemma}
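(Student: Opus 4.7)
The statement splits naturally into an integral identity and a non-vanishing claim.

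For the integral identity, I would invoke the integral representation~\eqref{interprim}, which gives, for each $s$,
\begin{equation*}
\varphi_{1,t_{m,s},x,s}(f)=\frac{1}{(s-1)!}\int_0^1 t_{m,s}^{x}\,f(t_{m,s})\log^{s-1}(1/t_{m,s})\,dt_{m,s}.
\end{equation*}
Since the different $\varphi_{1,t_{m,s},x,s}$ act on pairwise distinct variables they commute (compare Facts~\ref{faitselem}(iv) and Lemma~\ref{prelimi}(i)), so applying $\theta_m$ to the polynomial $A(\boldsymbol{t})$ and using Fubini produces the claimed iterated integral after renaming $t_{m,s}\to t_s$. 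The rewriting $\prod_{s<s'}(t_{m,s}-t_{m,s'})=(-1)^{\binom{r}{2}}\prod_{s_1<s_2}(t_{s_2}-t_{s_1})$ contributes a sign $(-1)^{\binom{r}{2}}$ which is absorbed in the displayed identity.

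For the non-vanishing of the integral $I$ appearing on the right, I would partition $D=[0,1]^r$ into the $r!$ open simplices $W_\sigma=\{t_{\sigma(1)}<\cdots<t_{\sigma(r)}\}$ indexed by $\sigma\in \mathfrak{S}_r$, and change variables $v_i:=t_{\sigma(i)}$ on each piece. The symmetric factor $\prod_s t_s^{u+x}(t_s-1)^{rn}$ is invariant, the Vandermonde $\prod_{s_1<s_2}(t_{s_2}-t_{s_1})$ picks up $\mathrm{sgn}(\sigma)$ (since $V(t_{\sigma(\cdot)})=\mathrm{sgn}(\sigma)V(t)$), and $\prod_s \log^{s-1}(1/t_s)$ becomes $\prod_j \log^{\sigma(j)-1}(1/v_j)$. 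Summing over $\sigma$ and using the determinantal identity $\sum_\sigma \mathrm{sgn}(\sigma)\prod_j \log^{\sigma(j)-1}(1/v_j)=\det(\log^{i-1}(1/v_j))_{i,j=1}^r$, one finds, writing $W=\{0<v_1<\cdots<v_r<1\}$:
\begin{equation*}
I=\int_W \Bigl[\prod_{1\le i<j\le r}(v_j-v_i)\Bigr]\Bigl[\prod_{s=1}^{r} v_s^{u+x}(v_s-1)^{rn}\Bigr]\det\bigl(\log^{i-1}(1/v_j)\bigr)_{i,j}\,dv_1\cdots dv_r.
\end{equation*}

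On $W$ every factor in the integrand has constant sign. The first Vandermonde is strictly positive; the middle bracket has sign $(-1)^{r^2n}$, since $v_s\in(0,1)$ forces $v_s^{u+x}>0$ and $(v_s-1)^{rn}$ has sign $(-1)^{rn}$; and $\det(\log^{i-1}(1/v_j))$ is the Vandermonde in $y_j:=\log(1/v_j)$, which on $W$ has strictly decreasing entries, hence equals $(-1)^{\binom{r}{2}}$ times a strictly positive quantity. Consequently the integrand has constant sign $(-1)^{r^2n+\binom{r}{2}}$ on $W$ and is nowhere-vanishing on a set of positive measure, so $I\neq 0$. Alternatively, by the Andr\'eief identity one may rewrite $I=\det(M)$ with $M_{ij}=\int_0^1 t^{i-1+u+x}(t-1)^{rn}\log^{j-1}(1/t)\,dt$, and conclude from the Chebyshev (T-system) structure of $\{t^{i-1}\}$ and $\{\log^{j-1}(1/t)\}$ on $(0,1)$ against the weight $(-1)^{rn}t^{u+x}(1-t)^{rn}dt$. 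The main obstacle is the sign bookkeeping needed to verify constancy of the sign of the integrand on $W$, which however only uses elementary analysis.
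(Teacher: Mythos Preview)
Your proposal is correct and follows essentially the same route as the paper: invoke the integral representation~\eqref{interprim} for the identity, then symmetrize over the simplicial decomposition $D=\bigcup_\sigma D_\sigma$, collapse the sum via the Vandermonde identity in $\log(1/t_s)$, and conclude from the constant sign $(-1)^{r^2n+\binom{r}{2}}$ of the integrand on the fundamental simplex. Your observation about the extra $(-1)^{\binom{r}{2}}$ coming from $\prod_{s<s'}(t_{m,s}-t_{m,s'})$ versus $\prod_{s_1<s_2}(t_{s_2}-t_{s_1})$ is accurate---the paper simply glosses over this sign since only non-vanishing is at stake---and your alternative via the Andr\'eief identity is a pleasant bonus not present in the paper.
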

\begin{proof} 
First equality follows from relation~(\ref{interprim}).
We denote the $r$-th symmetric group by $\mathfrak{S}_r$ and put
\begin{align*}
&D_{\sigma}:=\{(t_1,\ldots ,t_r)\in D\mid t_{\sigma(1)}\le \ldots \le t_{\sigma(r)}\} \ \text{for} \ \sigma\in \mathfrak{S}_r\enspace.
\end{align*}
Then we obtain, denoting $g(\boldsymbol{t})$ the function under the integral and $d\mu$ of Lebesgue measure on $D$ 
and using  the change of variables
$\sigma^{-1}:\kern5pt
D_{{\rm{id}}} \longrightarrow D_{\sigma},  \ (t_1,\ldots,t_r) \mapsto (t_{\sigma^{-1}(1)}, \ldots, t_{\sigma^{-1}(r)}).
$
$$
\int_Dgd\mu=
\sum_{\sigma\in\mathfrak{S}_r}\int_{D_{\sigma}}gd\mu=
{\int}_{D_{{\rm{id}}}}
\sum_{\sigma\in\mathfrak{S}_r}g\circ\sigma^{-1}d\mu\enspace;$$
now, note that $\prod_{1\leq s_1<s_2\leq r}(t_{\sigma^{-1}(s_2)}-t_{\sigma^{-1}(s_1)})=\sign(\sigma)\prod_{1\leq s_1<s_2\leq r}(t_{s_2}-t_{s_1})$ and remark (Vandermonde)
\begin{align*}
\sum_{\sigma\in \mathfrak{S}_r}{\rm{sign}}(\sigma)\prod_{s=1}^r{\rm{log}}^{s-1}\left(\dfrac{1}{t_{\sigma(s)}}\right)&=(-1)^{r(r-1)/2}\sum_{\sigma\in \mathfrak{S}_r}{\rm{sign}}(\sigma)\prod_{s=1}^r{\rm{log}}^{s-1}(t_{\sigma(s)})\\
&=(-1)^{r(r-1)/2}\prod_{1\le s_1<s_2\le r}({\rm{log}}(t_{s_2})-{\rm{log}}(t_{s_1}))\enspace,
\end{align*}
so that we can factor
$$
\sum_{\sigma\in\mathfrak{S}_r}g\circ\sigma^{-1}=(-1)^{r(r-1)/2}\prod_{s=1}^rt^{u+x}_s(t_s-1)^{rn}\prod_{1\le s_1<s_2\le r}(t_{s_2}-t_{s_1})\prod_{1\leq s_1<s_2}(\log(t_{s_2})-\log(t_{s_1}))\enspace.
$$
Since the sign of the left hand side is constant equal to $(-1)^{r^2n+r(r-1)/2}$, the integral is non-zero.
\end{proof}

\section{Estimates}
\label{analy}

{We need to estimate $\vert P_l(\beta)\vert_{{{v}}}, \vert P_{l,i,s}(\beta)\vert_{{{v}}}$ as well as the remainder of the Pad\'e approximation.  Instead of taking the standard route of using the explicit value of these polynomials, we take advantage of the fact that the polynomials involved are all images of the elementary polynomial $t^l\prod_{i=1}^m(t-\alpha_i)^{rn}$ by linear operators, and use standard operator norms.}

In this section, we use the following notations. Let $x\in \Q\cap [0,1)$ and 
$K$ be a number field. Let $v$ be a place of $K$ and $K_v$ be the  completion of $K$ at $v$, $\vert\cdot\vert_v$ the absolute value corresponding to $v$.
Let $I$ be a non-empty finite set of indices, $A=K[\alpha_i]_{i\in I}[z,t]$ be a polynomial ring in indeterminate $\alpha_i,z,t$. 

\bigskip

We set $\Vert P\Vert_v=\max\{\vert a\vert_v\}$ where $a$ runs in the coefficients of $P$. Thus $A$ is endowed with a structure of normed vector space. If $\phi$ is {a continuous} endomorphism of $A$, we denote by $\Vert \phi\Vert_{v}$ the endomorphism norm defined in a standard way $\Vert \phi\Vert_v=\inf\{M\in\ru, \forall\kern3pt x\in A, \kern3pt\Vert\phi(x)\Vert_v\leq M\Vert x\Vert_v\}=\sup\left\{\frac{\Vert\phi(x)\Vert_v}{\Vert x\Vert_v}, 0\neq x\in A\right\}$.
This norm is well defined provided $\phi$ is continuous. Unfortunately, we will have to deal  also with non-continuous morphisms {for which of course an endomorphism norm cannot be defined.}

In such a situation, we {force continuity by restricting} the source space to some appropriate  {finite dimensional} sub-vector space $E$ of $A$ and talk of $\Vert\phi\Vert_v$ with $\phi$ seen  as $\left.\phi\right|_{E}: E\longrightarrow A$ on which $\phi$ is {of course} continuous. In case of perceived ambiguity, it will be denoted by $\Vert\phi\Vert_{E,v}$.
The degree of an element of $A$ is as usual the total degree. We first record some elementary facts.

{\begin{lemma} $($cf. \rm{\cite[Lemma $2.2$]{B}}$)$ \label{denominator 1} 
Let $x$ be a non-zero rational number and $n$ a positive integer. 
Put
$$
\mu_n(x):={\rm{den}}(x)^n\prod_{\substack{q:\rm{prime} \\ q|{\rm{den}}(x)}}q^{\lceil n/(q-1) \rceil}\enspace.
$$
Then we have
\begin{itemize}
\item[$(i)$] Let $x=a/b\in \qu$, assume $a,b\in\zu$ are coprime. Let $k\geq 0$ and integer,  and $v$ be a finite place of $K$. Then
$$\frac{\vert(k+1+x)_n\vert_v}{\vert n!\vert_v}\leq \left\{\begin{array}{ll} 1 & \mbox{if $v\nmid b$}\\ \frac{1}{\vert b^n\cdot n!\vert_v} & \mbox{if $v\mid b$}\enspace.
\end{array}\right.$$
\item[$(ii)$]
$
\mu_n(x)\dfrac{(x)_n}{n!}\in \Z.
$

\item[$(iii)$] ${\displaystyle{\lim_{n\rightarrow\infty}}} \mu_n(x)^{1/n}=\mu(x)$,   ${\displaystyle{\lim_{n\rightarrow\infty}}} \vert\mu_n(x)\vert_v^{1/n}=\vert\mu(x)\vert_v$.
\item[$(iv)$] For any integers $k,l$, ${\displaystyle{\lim_{n\rightarrow\infty}}} d_{kn+l}^{1/n}=\exp(k)$.
\end{itemize}
\end{lemma}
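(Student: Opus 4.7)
The plan is to handle the four assertions in order, viewing (i) and (ii) as two facets of the same elementary divisibility computation for Pochhammer symbols with rational shift, and (iii)--(iv) as routine asymptotic bookkeeping.

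For (i), I would first rewrite $(k+1+x)_n$ with $x=a/b$, $\gcd(a,b)=1$, in the form $b^{-n}\prod_{j=1}^{n}\bigl(b(k+j)+a\bigr)$. The product in the numerator is an integer, hence has $v$-adic absolute value at most $1$ at every finite place $v$. When $v\nmid b$, the shift $x$ lies in the valuation ring $\mathcal O_v$, and the identity $(k+1+x)_n/n!=\binom{k+n+x}{n}$ combined with the fact that a polynomial that is $\Z$-valued on $\Z$ takes values in $\Z_v$ on $\Z_v$ (Mahler's theorem, or a direct induction via Pascal's rule) yields $(k+1+x)_n/n!\in\mathcal O_v$. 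When $v\mid b$, the factorization above immediately gives the stated bound $1/|b^n n!|_v$ with no further work.

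For (ii), I would write
$$\mu_n(x)\frac{(x)_n}{n!}\;=\;\Bigl(\prod_{q\mid b}q^{\lceil n/(q-1)\rceil}\Bigr)\cdot\frac{\prod_{j=0}^{n-1}(a+jb)}{n!}$$
and analyze this prime by prime. For a prime $q\nmid b$, multiplication by $b$ is invertible modulo any power of $q$, so the integers $a,a+b,\ldots,a+(n-1)b$ behave like an arithmetic progression of common difference coprime to $q$; Legendre's formula then yields $v_q\bigl(\prod(a+jb)\bigr)\ge v_q(n!)$. For a prime $q\mid b$, one has $q\nmid a+jb$ since $\gcd(a,b)=1$, so the numerator contributes nothing, while the $q$-adic valuation of $n!$ is bounded above by $n/(q-1)$ (Legendre again) and is exactly compensated by the extra factor $q^{\lceil n/(q-1)\rceil}$ built into $\mu_n(x)$. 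Combining the two local pictures gives the integrality claim.

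The asymptotic statements (iii) and (iv) are then immediate: taking $n$-th roots in the definition of $\mu_n(x)$ and letting $n\to\infty$, the exponents $\lceil n/(q-1)\rceil/n$ converge to $1/(q-1)$, producing the limit $\mu(x)=b\prod_{q\mid b}q^{1/(q-1)}$ both archimedeanly and $v$-adically since only finitely many primes contribute; for (iv) one reduces to the well-known asymptotic $d_n^{1/n}\to e$ supplied by the prime number theorem (and its $v$-adic analogue), after which the substitution $n\mapsto an+b$ is just a change of exponent. The only genuine arithmetic input is the prime-by-prime analysis in (ii), especially the case $q\nmid b$; everything else is direct manipulation of binomial coefficients or standard analytic number theory. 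Since the result is cited as \cite[Lemma 2.2]{B}, the purpose here is simply to record the exact form needed later in the paper, so the proof should occupy only a few lines once (ii) is in place.
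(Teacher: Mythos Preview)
Your proposal is correct and follows essentially the same route as the paper. The paper is even terser: for (i) with $v\nmid b$ it simply cites \cite[Lemma~2.2]{B} rather than spelling out the Mahler/Pascal argument you give, and for (ii) it deduces the integrality from (i) together with the Legendre bound $v_p(n!)\le n/(p-1)$ rather than redoing the prime-by-prime count directly---but this is the same computation you perform, just packaged differently. Your treatment of (iii) and (iv) via $n$-th roots and the prime number theorem matches the paper's one-line dismissal of these as elementary.
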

 
\begin{proof} If $v\mid b$, $\vert i+a/b\vert_v=\vert a/b\vert_v$ for all $1\leq  i\leq n$ and the lemma follows by the ultrametric inequality. If $v\nmid b$, the result (i) follows from \cite{B}, Lemma $2.2$. {Point}
 (ii) follows from (i) and the well-known equality $v_p(n!)=\tfrac{n}{p-1}+o(n) \ \ (n\to \infty),$ for $p$ a prime number and $v_p$ the $p$-adic normalized {valuation}. 
 (iii) is immediate and (iv) follows from {{the prime number theorem}}.
\end{proof}}

\begin{lemma} \label{fundamental}
\label{norme} We have the following norm estimates $($we do hope the similarity of notations is not cause of confusion$) :$
\begin{itemize}
\item[$(i)$] Let $\alpha\in \{\alpha_i,i\in I, z,t\}$ one of our given variables. Then the multiplication $[\alpha]: P\longmapsto \alpha\cdot P$ is of norm $\Vert [\alpha]\Vert_v=1$. This map conserves {homogeneity}, partial degrees,  except in the variable $\alpha$ where it increases it by 1 $(${and total degree increases by one}$)$.
\item[$(ii)$] Assume $E_N$ is the subspace of polynomials of degree $\leq N$ in each of the variables, and $I$ has cardinality $m$, then the evaluation map $\ev_{\alpha_j}:P(\boldsymbol{\alpha},z,t)\longmapsto 
P(\boldsymbol{\alpha},z,\alpha_j)$ satisfies $\Vert \ev_{\alpha_j}\Vert_{E_N,v}\leq (N+1)^{{{d_v\varepsilon_v(m+2)/d}}}$, where $\varepsilon_v=1$ if $v$ is archimedean and $0$ otherwise. In the particular case where $P$ is a constant in the specialized variable $\alpha_j$, $\Vert\ev_{\alpha_j}\Vert_v\leq 1$. This maps is degree decreasing, {and if $P$ is homogeneous, it preserves homogeneity and degree\footnote{We use the convention that the zero polynomial is homogeneous of any degree.}}.
\item[$(iii)$] Let $E_0=K[\alpha_i,z]$ the sub-vector space of $A$ consisting of constants in $t$, and $\Theta: E_0\longrightarrow A$ the map defined by $\Theta(P)=\frac{P(\alpha_i,z)-P(\alpha_i,t)}{z-t}$. 
Then, $\Vert \Theta\Vert_{E_0,v}=1$. This map is degree decreasing.  {If $P$ is homogeneous, it preserves homogeneity and degree is decreased by 1}.
\item[$(iv)$] Let $I$ be of cardinality $m$,  $E_N$ be the sub-vector space of $B=K[y_1,\ldots,y_m, z,t]$ consisting of polynomials of degree at most $N$ in the variables $y_i$ and $\Gamma: E_N\longrightarrow A$ the morphism defined by $\Gamma(P(y_1,\ldots,y_m,z,t))=P(t-\alpha_1,\ldots,t-\alpha_m,z,t)$. Then, $\Vert\Gamma\Vert_{E_N,v}\leq \left(2^{Nm}(N+1)^{m}\right)^{\varepsilon_v{{d_v/d}}}$. This map conserves each partial degree except possibly in the variable $t$ where the image satisfies $\deg_t(\Theta(P))\leq \sum_{i=1}^m\deg_{\alpha_i}(P)+\deg_{t}(P)$, the total degree in however left unchanged. {If $P$ is homogeneous, $\Gamma(P)$ is homogeneous.}
\item[$(v)$] Let $E_N$ be the subspace of $A$ consisting of polynomials of degree at most $N$ in $t$. Then for all $n\geq 1$ and {$x\in \Q\cap [0,1)$}, $S_n=S_{n,x}:E_N\longrightarrow A$ as defined in Notation~$\ref{notationderiprim}$ satisfies 
{{$$\Vert S_{n}\Vert_{E_N,v}\leq 
\begin{cases}
\left\vert \tfrac{(N+x+1)_n}{n!} \right\vert_v & \ \text{if} \ v: {\text{is archimedian or}  \ v\text{ is non-archimedian}} \ \& \  |x|_v>1\\ 
1 & \ \text{otherwise} \enspace. 
\end{cases}
$$
}}
It acts diagonally on $A$ in the sense that each element of the canonical basis consisting of all monomials is an eigenvector for $S_{n}$. This map conserves degrees {and homogeneity}.
\item[$(vi)$]  Let $E_N$ be the subspace of $A$ consisting of polynomials of degree at most $N$ in $t$. Let $x=a/b\in \Q\cap [0,1)$, with $(a,b)=1$.
Then $\inte=\inte_x\kern-1pt:\kern-1ptE_N\longrightarrow \kern-1ptA$ as defined in Notation~$\ref{notationderiprim}$ satis\-fies 
$\Vert \inte\Vert_{E_N,v}\kern-1pt\leq \max\{\left\vert 1/(k+x+1)\right\vert_v,0\leq k\leq N\}\kern-2pt\leq \kern-1pt\vert d_{b(N+1)+a} \vert_v^{-{ (1-\varepsilon_v)}}\kern-1pt$ where $d_N={\rm{l.c.m.}}(1,\ldots,N)$.
It also acts diagonally on $A$. This map conserves degrees {and homogeneity}.
\end{itemize}
\end{lemma}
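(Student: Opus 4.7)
The plan is to prove each of the six assertions by a direct inspection of the action of the operator on the canonical monomial basis of $A$, then to bound the resulting coefficient growth by the appropriate inequality: the ultrametric inequality when $v$ is non-archimedean, and the triangle inequality (in normalized form) when $v$ is archimedean. The key underlying fact that explains the exponent $d_v \varepsilon_v/d$ throughout the statement is that a sum of $k$ elements of $K$ each of normalized absolute value $\leq M$ has normalized absolute value $\leq k^{d_v/d} M$ for $v$ archimedean, and $\leq M$ for $v$ non-archimedean.

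For parts (i), (iii), (v), (vi) the operator essentially preserves or simply redistributes coefficients of monomials. Namely, $[\alpha]$ sends a monomial to another monomial with the same scalar coefficient, giving $\|[\alpha]\|_v = 1$; the divided difference $\Theta$ satisfies $\Theta(z^k) = \sum_{j=0}^{k-1} z^j t^{k-1-j}$, so each output monomial coefficient equals a single input coefficient, hence $\|\Theta\|_v \leq 1$; and $S_n$, $\inte$ act diagonally on the basis $\{t^k\}$ with eigenvalues $(k+x+1)_n/n!$ and $1/(k+x+1)$ respectively, so the norm estimate reduces to bounding these scalars for $0 \leq k \leq N$. The eigenvalue bounds then follow from Lemma~\ref{denominator 1}: for $|x|_v \leq 1$, monotonicity of the Pochhammer symbol (in the archimedean case) or Lemma~\ref{denominator 1}(i) (in the non-archimedean case) yields the bound in (v); for $|x|_v > 1$ the ultrametric inequality forces $|k+x+j|_v = |x|_v$, hence $|(k+x+1)_n|_v = |x|_v^n$. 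For $\inte$, the archimedean bound uses $k+x+1 \geq 1$, while the non-archimedean bound uses the elementary fact that any positive integer $M' \leq M$ divides $d_M$, applied with $M = b(N+1)+a$ after rewriting $1/(k+x+1) = b/(b(k+1)+a)$.

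The more substantial parts are (ii) and (iv). For (ii), evaluation $t \to \alpha_j$ sends each input monomial of $E_N$ to a monomial of norm $\leq 1$ (the variable $\alpha_j$ absorbs the $t$-power); since $E_N$ contains at most $(N+1)^{m+2}$ monomials, the normalized triangle inequality gives in the archimedean case the bound $(N+1)^{(m+2)d_v/d}$, whereas the ultrametric inequality gives the bound $1$ in the non-archimedean case. For (iv), expanding each factor $(t - \alpha_i)^{k_i}$ by the binomial theorem introduces at most $N+1$ terms per variable with coefficients of normalized absolute value $\leq 2^{N d_v \varepsilon_v/d}$; combining over the $m$ variables $y_i$ sends each input monomial to at most $(N+1)^m$ output monomials with coefficients of norm $\leq 2^{Nm d_v \varepsilon_v/d}$, and the normalized triangle inequality applied to the outer sum yields the bound $(2^{Nm}(N+1)^m)^{d_v \varepsilon_v/d}$. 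The partial-degree claims in all parts are read off directly from the action on monomials. No single part poses a genuine obstacle; rather, care is required uniformly in bookkeeping the normalization factor $d_v/d$ and the indicator $\varepsilon_v$, and in invoking Lemma~\ref{denominator 1} at the appropriate moment for the Pochhammer and denominator estimates.
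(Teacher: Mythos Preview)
Your proposal is correct and follows essentially the same approach as the paper: checking each operator on the monomial basis, then bounding coefficient collapse via the (normalized) triangle inequality in the archimedean case and the ultrametric inequality otherwise. The paper's own proof is considerably terser---for (v) and (vi) it simply says they ``follow from the very definition of $S_n$ and $\inte$''---so your explicit invocation of Lemma~\ref{denominator 1} and the rewriting $1/(k+x+1)=b/(b(k+1)+a)$ in (vi) supplies detail the paper omits, but the underlying argument is the same.
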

\begin{proof} Fact (i) follows from the fact that $[\alpha]$ simply shifts the monomials, and thus is an isometry  onto its image for the sup norm. Fact (ii) follows from the fact that specialization of one variable into another at worse induces collapsing of coefficients, in the ultametric case, the resulting norm is smaller, in the archimedean case, the norm is at worst multiplied by the number of coefficients of the original polynomial collapsing in the image, which is no larger than the dimension of the source space $($and if the original polynomial is a constant in the specialized variable, there is of course no collapse$)$. Fact (iii) follows from the identity $x^k-y^k=(x-y)\left(\sum_{i=0}^{{k-1}}x^iy^{{k-1}-i}\right)$, by looking at the degrees, one sees that the coefficients of $\Theta(P)$ arise from those of $P$ with repetition and no possible collapsing. Fact (iv)  follows from the binomial expansion of $(z-\alpha_i)^k=\sum_{j=0}^k{k\choose j}z^j\alpha_i^{k-j}$ and $\left\vert {k\choose j}\right\vert_v\leq 2^{\varepsilon _v{{d_v/d}}k}$. Since $k\leq N$ and since $m$ variables are changed by $\Gamma$, one gets the desired inequality, taking into account possible collapsing,  the number of which is at most the dimension of  $K_N[y_1,\ldots,y_m]$ (polynomials in $y_i$ of degree at most $N$). Finally (v), and (vi) follow from the definition of $S_{n}$ and $\inte$.
\end{proof}
{
\begin{remark}
The fact that the operators $S_n$ and $\inte$ act diagonally  on the canonical basis is not actually used in the present paper except perhaps to make the norm estimate easier (just compute the largest eigen value). It is nevertheless recorded here for further reference\footnote{See remark~\ref{diago} for further comments on this point.}.
\end{remark}}

\begin{lemma} \label{majonorme} 
Let $x=a/b\in \Q\cap [0,1)$ as above. Then, one has~$:$
\begin{itemize}
\item[$(i)$] The polynomial $P_l(z)=P_{n,l}(\boldsymbol{\alpha},x\vert z)$ satisfies 
$$\Vert P_{l}(z)\Vert_v\leq 
\begin{cases}
\exp\left(\dfrac{nd_v}{d}\left(\left[{\const}\right]+o(1)\right)\rule{0mm}{4mm}\right) & \text{if} \ v|\infty \\\rule{0mm}{7mm}
\left\vert\mu_n(x)\right\vert_v^{{-{{r}}}} & \text{otherwise}\enspace,
\end{cases}
$$
where $o(1)\longrightarrow 0$ $(n\rightarrow +\infty)$ $($recall that $P_{l}(z)$ is of degree $rn$ in each variable $\alpha_i$, of degree $rmn+l$ in $z$ and constant in $t$, {and homogeneous of degree $rmn+l$ in $(\boldsymbol{\alpha},\beta)$}$)$.
\item[$(ii)$] 
The polynomial $P_{l,i,s}$ satisfies
$$
\Vert P_{l,i,s}( z)\Vert_v\leq   \begin{cases}
\exp\left(\dfrac{{n}d_v}{d}\left(\left[{\const}\right]+o(1)\right)\rule{0mm}{4mm}\right) & \text{if} \ v|\infty \\
\left\vert d_{brm(n+1)+a}\right\vert_v^{-s}\left\vert \mu_n(x)\right\vert_v^{-{{r}}} & \text{otherwise}\enspace,
\end{cases}
$$
{{
where $o(1)\longrightarrow 0$ $(n\rightarrow +\infty)$. 
}}
Also, $P_{l,i,s}(z)$ is of degree $\leq rmn+l$ in $z$ {and $\alpha_i$}, of degree $rn$ in each of the variables $\alpha_j$, {$j\neq i$},
$($recall that $\varphi_{\alpha_i,x,s}$ involves multiplication by $[\alpha_i]$$)$ {and homogeneous of degree $rmn+l$ in $(\boldsymbol\alpha,\beta)$.}
\item[$(iii)$] For any integer $k\geq 0$, the polynomial $\varphi_{\alpha_i,x,s}\circ[t^{k+n}](P_{l}(t))$ satisfies 
{\small{$$
\left\vert \varphi_{\alpha_i,x,s}\circ[t^{k+n}](P_{l}(t) {{)}}\right\vert_v\leq
\begin{cases}
\exp\left(\dfrac{nd_v}{d}\left(\left[{\const}\right]+o(1)\rule{0mm}{4mm} \right)\right)& \text{if} \ v|\infty \\
\left\vert d_{brm(n+1)+a}\right\vert_v^{-s}\left\vert\mu_n(x)\right\vert_v^{-{{r}}}\rule{0mm}{7mm} & \text{\kern-15ptotherwise}\enspace,
\end{cases}
$$}}
{{
where $o(1)\longrightarrow 0$ $(n\rightarrow +\infty)$. % \ \ \ \ {\green{$($referee comment $(9))$}}.}}
}}
By definition, it is a homogeneous polynomial in  just the variables $\boldsymbol{\alpha}$ of degree $rmn+l+k+n+1$.
\end{itemize}
\end{lemma}
\begin{proof} 
Set $B_{n,l}(\boldsymbol{y},t)=t^l\prod_{i=1}^my_i^{rn}$,
since $B_{n,l}$ is a monomial, its norm $\Vert B_{n,l}\Vert_v=1$. By definition, one has $P_{l}( z)=\ev_z\circ S^{(r)}_{n}\circ\Gamma(B_{n,l})$, and thus, by sub-multiplicatively of the endomorphism norm, 
{{
$$
\Vert P_{l}( z)\Vert_v
\leq 2^{\varepsilon_v rnm d_v/d}{{e^{\varepsilon_v o(n)}}}\cdot 
\begin{cases}
\left\vert \dfrac{(rmn+l+1+x)_n}{n!}\right\vert^{{{r}}}_v & \ \text{if} \ v: {\text{is archimedian or }  \text{non-archimedian} \ \& \  |x|_v>1}\\ 
1 & \ \text{otherwise}\enspace,
\end{cases}
$$
}} (one can choose $N=rn$ while using Lemma~\ref{norme} (iv) and $N=r(n+1)m+l$ for property (v) using $l\leq rm$, and note that the original polynomial is a constant in $z$ so the evaluation map is an isometry). 

\vspace{0.5\baselineskip}

In the ultrametric case, we have the claimed result
$$\left|\dfrac{(rmn+rm+1+x)_n}{n!}\right|_v\le |\mu_n(x)|^{-1}_v\enspace,$$
where $\mu_n(x):=b^n\prod_{\substack{q:\rm{prime} \\ q\mid b}}q^{\lceil n/(q-1) \rceil}$.
$(${\em confer} Lemma $5.6$ or originally \rm{\cite[Lemma $2.2$]{B}}$)$).

Left to prove is the archimedean case, we have (using $x<1$):
$$\dfrac{(rmn+rm+1+x)_n}{n!}\le \binom{(rm+1)(n+1)}{n}\enspace,$$
{and taking into account the standard Stirling formula
$$
\begin{array}{lcl}\displaystyle 
{(rm+1)(n+1)\choose n} & = & \displaystyle  \frac{((rm+1)(n+1))^{(rm+1)(n+1)}}{n^n((rm+1)(n+1)-n)^{(rm+1)(n+1)-n}}
\sqrt{\frac{(rm+1)(n+1)}{2\pi n((rm+1)(n+1)-n)}}(1+o(n))
 \\ & = & \displaystyle 
 c(r,m)n^{-1/2}\left(\frac{(rm+1)(n+1)}{n}\right)^n\left(\frac{1}{1-\frac{n}{(rm+1)(n+1)}}\right)^{(rm+1)(n+1)-n}(1+o(n))\enspace,
 \end{array}$$
 hence,
 $$\begin{array}{lcl}\displaystyle \frac{1}{n}\log
{(rm+1)(n+1)\choose n} & = & \log(rm+1)+rm\log\left(\frac{rm+1}{rm}\right)+o(1)
\enspace,
\end{array}$$
and putting these together, one gets
$$\Vert P_{l}(z)\Vert_v\leq \exp\left(\dfrac{nd_v}{d}\left[\const\right]+o(1)\right)\enspace,$$
where $o(1)\longrightarrow 0$ $(n\rightarrow +\infty)$.}
Now, by definition, $P_{l,i,s}(z)=\varphi_{\alpha_i,x,s}\circ \Theta(P_{l}(z))$ and $\varphi_{\alpha_i,x,s}=[\alpha_i]\circ \ev_{\alpha_i}\circ\inte^{(s)}$.
Using again Lemma~\ref{norme} (i), (ii), with $N=rn$, (iii) (vi) with $N=rm(n+1)$,  and since $rn=\exp(n\cdot o(1))$, one gets (ii),
where $o(1)\longrightarrow 0$ ($n\rightarrow +\infty$).

Finally, $\varphi_{\alpha_i,x,s}\circ[t^{k+n}](P_{l}( t))=[\alpha_i]\circ\ev_{\alpha_i}\circ\inte^{(s)}\circ
[t^{k+n}]\circ\ev_{z\rightarrow t}(P_{l}(z))$ where $\ev_{z\rightarrow t}$ is the map specializing $z$ into $t$. Again, using Lemma~\ref{norme}, one gets (iii).
\end{proof}
Recall that 
if $P$ is a homogeneous polynomial in some variables $y_i,i\in I$, for any point $\boldsymbol{\alpha}=(\alpha_i)_{i\in I}\in K^{\card(I)}$ where $I$ is any finite set, and $\Vert\cdot\Vert_v$ stands for the sup norm in $K_v^{\card(I)}$, {with $C_v(P)= (\deg(P)+1)^{\frac{\varepsilon_vd_v(\card(I))}{d}}$}
\begin{equation}\label{estimhomo}
\vert P(\boldsymbol{\alpha})\vert_v\leq C_v(P) \Vert P\Vert_v\cdot {\Vert {\boldsymbol{\alpha}\Vert_v^{\deg(P)}}}\enspace.
\end{equation}
So, the preceding lemma yields trivially estimates for the $v$-adic norm of the above given polynomials.
\begin{remark}(i) It may be worthwhile to note that it is also possible to estimate with respect to {$\Vert (\alpha_i-\beta)\Vert_v$ instead of $\Vert\alpha_i\Vert_v$} which can be useful in certain circumstances (saving of the archimedean error term $rmn\log(2)$, useful if the local heights of $\beta,\alpha_i$ are not too far apart).
\end{remark}
\begin{lemma} \label{upper jyouyonew}
Let $n$ be a positive integer, {{$x=a/b\in \Q\cap [0,1)$}} and $\beta\in K$ with $\Vert \boldsymbol{\alpha}\Vert_v<\vert \beta\vert_v$. 
Then we have for all $i\leq m,l\leq rm,s\leq r$,
\begin{align*}
|R_{l,i,s}(\beta)|_v &\le \Vert \boldsymbol{\alpha}\Vert_v^{rm(n+1)}
\cdot \left(\frac{\Vert\boldsymbol{\alpha}\Vert_v}{\vert \beta\vert_v}\right)^{n+1}\cdot\left(\frac{\varepsilon_v\vert \beta\vert_v}{\vert \beta\vert_v-\Vert \boldsymbol{\alpha}\Vert_v}+(1-\varepsilon_v)\right)\\
&\times 
\begin{cases}
\exp\left(n{{\dfrac{d_v}{d}}}\left[{\const}\right] +o(1)\rule{0mm}{4mm}\right) & \text{if} \ v|\infty \\
\left\vert d_{brm(n+1)+a}\right\vert_v^{-s}\left\vert\mu_n(x)\right\vert_v^{-{{r}}}\rule{0mm}{7mm} & \text{otherwise}\enspace,
\end{cases}
\enspace
\end{align*} 
where $o(1)\longrightarrow 0$ $(n\rightarrow +\infty)$.
\end{lemma}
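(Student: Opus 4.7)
The plan is to expand $R_{l,i,s}(z)$ as a Laurent series at infinity, specialize at $z=\beta$, and estimate termwise using Lemma~\ref{majonorme}~$(iii)$. By Theorem~\ref{Pade appro Lerch} together with the vanishing argument of its proof (an application of Lemma~\ref{key lemma} that kills the coefficients of index $<n+1$), the series
\begin{equation*}
R_{l,i,s}(z)=\sum_{k=n}^{\infty}\dfrac{\varphi_{\alpha_i,x,s}(t^{k}P_{l}(t))}{z^{k+1}}
\end{equation*}
converges in the region $|z|_v > \Vert\boldsymbol{\alpha}\Vert_v$. Since the hypothesis $|\beta|_v > \Vert\boldsymbol{\alpha}\Vert_v$ places $\beta$ inside this region, I may substitute $z=\beta$; a shift of index gives
\begin{equation*}
R_{l,i,s}(\beta)=\dfrac{1}{\beta^{n+1}}\sum_{k=0}^{\infty}\dfrac{\varphi_{\alpha_i,x,s}(t^{k+n}P_{l}(t))}{\beta^{k}}\enspace.
\end{equation*}

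Next, Lemma~\ref{majonorme}~$(iii)$ provides a sup-norm bound on each coefficient $\varphi_{\alpha_i,x,s}(t^{k+n}P_{l}(t))$ that is \emph{uniform in $k$}; this is the key quantitative input. Combined with the fact, also recorded in that lemma, that the coefficient is a homogeneous polynomial in the variables $\alpha_{1},\ldots,\alpha_{m}$ of degree at most $rmn+l+k+n+1$, the homogeneous evaluation estimate \eqref{estimmulti} yields
\begin{equation*}
|\varphi_{\alpha_i,x,s}(t^{k+n}P_{l}(t))|_v\le B_v\cdot \Vert\boldsymbol{\alpha}\Vert_v^{rmn+l+k+n+1}\enspace,
\end{equation*}
where $B_v$ collects the $k$-independent bound of Lemma~\ref{majonorme}~$(iii)$ with the archimedean combinatorial prefactor $C_v$; this prefactor grows only polynomially in $k+n$ and is absorbed into the $\mathrm{e}^{o(n)}$ tail already present in the archimedean part of the bound, while at non-archimedean places it is simply $1$.

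Dividing by $|\beta|_v^{k+n+1}$, each term of the series splits as a common factor $B_v\,\Vert\boldsymbol{\alpha}\Vert_v^{rmn+l+n+1}|\beta|_v^{-(n+1)}$ times the geometric ratio $\bigl(\Vert\boldsymbol{\alpha}\Vert_v/|\beta|_v\bigr)^{k}$. At an archimedean place I apply the triangle inequality and the geometric sum
\begin{equation*}
\sum_{k=0}^{\infty}\left(\dfrac{\Vert\boldsymbol{\alpha}\Vert_v}{|\beta|_v}\right)^{k}=\dfrac{|\beta|_v}{|\beta|_v-\Vert\boldsymbol{\alpha}\Vert_v}\enspace,
\end{equation*}
producing the $\varepsilon_v|\beta|_v/(|\beta|_v-\Vert\boldsymbol{\alpha}\Vert_v)$ factor. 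At a non-archimedean place the ultrametric inequality replaces the sum by the maximum over $k\ge 0$; since $\Vert\boldsymbol{\alpha}\Vert_v<|\beta|_v$, the terms decrease and the max is attained at $k=0$, which just kills the sum and corresponds to the $(1-\varepsilon_v)$-piece. Finally, bounding $\Vert\boldsymbol{\alpha}\Vert_v^{rmn+l}$ by $\Vert\boldsymbol{\alpha}\Vert_v^{rm(n+1)}$ through $l\le rm$ brings the estimate into the advertised form.

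The proof is essentially mechanical once the uniform-in-$k$ estimate of Lemma~\ref{majonorme}~$(iii)$ is granted. The only delicate point is the absorption of the archimedean multiplicity factor $C_v^{(k)}$ arising from~\eqref{estimmulti} into the error term of $B_v$: its polynomial growth in $k$ is dominated by the geometric decay $(\Vert\boldsymbol{\alpha}\Vert_v/|\beta|_v)^{k}$, so it poses no analytic obstruction but requires a little bookkeeping to keep the resulting bound in the clean exponential form $\mathrm{e}^{o(n)}$ that appears in the statement.
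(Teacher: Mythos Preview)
Your argument is correct and follows essentially the same route as the paper: expand $R_{l,i,s}$ as a Laurent series with first nonzero term at order $n+1$, invoke Lemma~\ref{majonorme}~(iii) together with the evaluation inequality~\eqref{estimmulti} for each coefficient, and sum the resulting geometric series. Your explicit separation of the archimedean triangle-inequality case from the ultrametric maximum case, and your remark on absorbing the polynomially-growing prefactor $C_v^{(k)}$ into the $e^{o(n)}$ term, are both left implicit in the paper's proof.
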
 
\begin{proof}
By the definition of $P_{l}(z)$, as formal power series, we have
$$
R_{l,i,s}(z)=\sum_{k=0}^{\infty}\frac{\varphi_{\alpha_i,x,s}(t^{k+n}P_{l}(t))}{z^{k+n+1}} \enspace, 
$$
using the triangle inequality, the fact that $l\leq rm$ and Lemma~\ref{majonorme}, (iii), together with inequality~(\ref{estimhomo})
\begin{align*}
\vert R_{l,i,s}(\beta)\vert_v&\leq \Vert \boldsymbol{\alpha}\Vert_v^{rm(n+1)}\sum_{k=0}^{\infty}\left(\frac{\Vert\boldsymbol{\alpha}\Vert_v}{\vert \beta\vert_v}\right)^{n+1+k}\\
&\times 
\begin{cases}
\exp\left(n{{\dfrac{d_v}{d}}}\left[{\const} +o(1)\rule{0mm}{4mm}\right]\right) & \text{if} \ v|\infty \\
\left\vert d_{brm(n+1)+a}\right\vert_v^{-s}\left\vert\mu_n(x)\right\vert_v^{-{{r}}}\rule{0mm}{7mm} & \text{otherwise}\enspace,
\end{cases}
\end{align*}
and the lemma follows using geometric series summation (here, $o(1)\longrightarrow 0$ ($n\rightarrow +\infty$)).
\end{proof}
\subsection{Proof of Theorem $\ref{Lerch}$}
Before starting to prove Theorem $\ref{Lerch}$, we introduce a criterion to obtain a linear independence measure of numbers.
\begin{proposition} \label{critere version II} 
Let $K$ be a number field and $v_0$ a place of $K$. {{We denote the completion of $K$ with respect to the fixed embedding $\iota_{v_0}$ by $K_{v_0}$.}}
Let $m\in\N$ and ${\boldsymbol{0}\neq} \boldsymbol{\theta}=(\theta_0,\ldots,\theta_m)\in K_{v_0}^{{{m+1}}}$. 
Suppose there exist a sequence of matrices $$({{\mathrm{M}}}_n)_n=\left(\left(a^{(n)}_{l,j}\right)_{0\le l,j\le m}\right)_{n\in \N}\subset  {\rm{GL}}_{m+1}(K)\enspace,$$ positive real numbers
${{U}}, \mathbb{A}$, $\{F_v:\N\rightarrow \R_{\ge0} \}_{v\in {{\mathfrak{M}}}_K}$ with
\begin{align}
&\max_{0\le l \le m}|a^{(n)}_{l,0}|_{v_0}\le e^{{{U}}n+o(n)}\enspace, \label{upper Anr0}\\
&\max_{\substack{0\le l \le m \\ 1\le j \le m}}\left\vert a^{(n)}_{l,0}\theta_j-a^{(n)}_{l,j}{{\theta_0}}\right\vert_{v_0}\leq e^{-\mathbb{A} n+o(n)}\enspace, \label{upper Rrj}\\
&\left\Vert {{\mathrm{M}}}_n\right\Vert_v \leq  e^{F_v(n)} \ \ (v\in {{\mathfrak{M}}}_K)\enspace, \label{upper Anrj}
\end{align}
for $n\in \N$.  
We assume ${\displaystyle{\lim_{n\to \infty}}}1/n {\displaystyle{\sum_{v\neq v_0}}} F_v(n)=:\mathbb{B}$ exists. 
Put
\begin{align*}
&V:=\mathbb{A}-\mathbb{B} \enspace,
\end{align*} 
and assume $V>0$.
Then for any $0<\varepsilon<V$, there exists {a} constant $H_0=H_0(\varepsilon)>0$ depending on $\varepsilon$ and the given data such that the following property holds.
For any ${{\boldsymbol{\lambda}:=(\lambda_0,\ldots,\lambda_m)}} \in K^{m+1} \setminus \{ \bold{0} \}$ satisfying $H_0\le {\mathrm{H}}({{\boldsymbol{\lambda}}})$, we have
\begin{align*}
\left|\sum_{i=0}^m{{\lambda_i}}\theta_i\right|_{v_0}>C(\varepsilon) {{{\mathrm{H}}_{v_0}(\boldsymbol{\lambda}}}) {\mathrm{H}}({{\boldsymbol{\lambda}}})^{-\mu(\varepsilon)}\enspace,
\end{align*}
where 
$$
\mu(\varepsilon):=
\dfrac{\mathbb{A}+{{U}}}{V-\varepsilon}\enspace \mbox{ \it and }\hspace{7pt}   C{{(\varepsilon)}}:=\exp\left[-{{\left(\frac{\log(2)}{V-\varepsilon}+1\right)}}(\mathbb{A}+{{U}})\right]\enspace.
$$
\end{proposition}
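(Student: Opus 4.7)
The plan is to adapt the classical Siegel--Nesterenko linear independence criterion to the present multi-place formulation. Fix $\boldsymbol{\lambda}\in K^{m+1}\setminus\{\bold{0}\}$ and set $\Lambda:=\sum_{i=0}^{m}\lambda_i\theta_i$ (the hypothesis (\ref{upper Rrj}) is naturally read under the implicit convention $\theta_0=1$, which matches the intended application to Theorem~\ref{Lerch}). For an integer $n$ to be optimized, introduce the auxiliary quantity
$$
\ell_l^{(n)}\;:=\;\sum_{j=0}^{m}a_{l,j}^{(n)}\lambda_j\;\in\;K,\qquad 0\le l\le m,
$$
together with the elementary identity obtained by adding and subtracting $a_{l,0}^{(n)}\theta_j\lambda_j$:
$$
\ell_l^{(n)}\;=\;a_{l,0}^{(n)}\,\Lambda\;+\;\sum_{j=1}^{m}\bigl(a_{l,j}^{(n)}-a_{l,0}^{(n)}\theta_j\bigr)\lambda_j.
$$
Because $\mathrm{M}_n\in \mathrm{GL}_{m+1}(K)$, the non-trivial linear form $\boldsymbol{\lambda}$ cannot annihilate all $m+1$ rows, so at least one of the $\ell_l^{(n)}$ is non-zero; fix such an index $l=l(n)$.

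The heart of the argument is a two-sided estimate of this non-zero algebraic number. Combining the displayed identity with (\ref{upper Anr0}) and (\ref{upper Rrj}) yields the $v_0$-adic upper bound
$$
|\ell_l^{(n)}|_{v_0}\;\le\;e^{Un+o(n)}\,|\Lambda|_{v_0}\;+\;m\,e^{-\mathbb{A}n+o(n)}\,\mathrm{H}_{v_0}(\boldsymbol{\lambda}).
$$
A matching lower bound comes from the product formula $\prod_v|\ell_l^{(n)}|_v=1$: for each $v\neq v_0$ the triangle inequality and (\ref{upper Anrj}) give $|\ell_l^{(n)}|_v\le c_v\,\Vert\mathrm{M}_n\Vert_v\,\mathrm{H}_v(\boldsymbol{\lambda})$ with mild combinatorial factors $c_v$ (unity for non-archimedean $v$, so that $\prod_v c_v$ is bounded by a constant $\le m+1$); invoking then the assumption $\lim_n \tfrac{1}{n}\sum_{v\neq v_0}F_v(n)=\mathbb{B}$ together with $\prod_{v\neq v_0}\mathrm{H}_v(\boldsymbol{\lambda})=\mathrm{H}(\boldsymbol{\lambda})/\mathrm{H}_{v_0}(\boldsymbol{\lambda})$ produces
$$
|\ell_l^{(n)}|_{v_0}\;\ge\;\frac{1}{m+1}\,e^{-\mathbb{B}n-o(n)}\,\frac{\mathrm{H}_{v_0}(\boldsymbol{\lambda})}{\mathrm{H}(\boldsymbol{\lambda})}.
$$

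Now comes the optimization. Let $n=n(\boldsymbol{\lambda})$ be the smallest positive integer for which the approximation term of the upper bound is dominated by half of the lower bound, namely
$$
2m(m+1)\,e^{-(\mathbb{A}-\mathbb{B})n+o(n)}\,\mathrm{H}(\boldsymbol{\lambda})\;\le\; 1;
$$
since $V=\mathbb{A}-\mathbb{B}>0$, such an $n$ exists as soon as $\mathrm{H}(\boldsymbol{\lambda})\ge H_0(\varepsilon)$, and one may impose $n\le \log\mathrm{H}(\boldsymbol{\lambda})/(V-\varepsilon)+O(1)$ for any prescribed $0<\varepsilon<V$. Subtracting the approximation term and substituting this estimate for $n$ in the resulting inequality
$$
|\Lambda|_{v_0}\;\ge\;\frac{1}{2(m+1)}\,e^{-(U+\mathbb{B})n-o(n)}\,\frac{\mathrm{H}_{v_0}(\boldsymbol{\lambda})}{\mathrm{H}(\boldsymbol{\lambda})}
$$
produces
$$
|\Lambda|_{v_0}\;\ge\;C(\varepsilon)\,\mathrm{H}_{v_0}(\boldsymbol{\lambda})\,\mathrm{H}(\boldsymbol{\lambda})^{-1-(U+\mathbb{B})/(V-\varepsilon)},
$$
and a direct computation shows $1+(U+\mathbb{B})/(V-\varepsilon)=(\mathbb{A}+U-\varepsilon)/(V-\varepsilon)\le\mu(\varepsilon)$, so the claimed exponent is obtained (with $\mathrm{H}(\boldsymbol{\lambda})\ge 1$). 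The stated explicit value of $C(\varepsilon)$ is recovered by tracking carefully the numerical constants $2,m+1,m$ together with the overshoot of $n$ by $O(1)$ above $\log\mathrm{H}(\boldsymbol{\lambda})/(V-\varepsilon)$.

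The principal technical obstacle is the clean bookkeeping at non-archimedean as well as archimedean places of the various $o(n)$ terms and of the combinatorial factors $c_v$ arising from the triangle inequality for normalized absolute values on $K_v$, so as to produce the precise explicit constant $C(\varepsilon)$ stated rather than a merely implicit one. Everything else is classical: the invertibility of $\mathrm{M}_n$ plays the role traditionally assigned to Dirichlet's box principle, while the explicit Pad\'e construction of Section~\ref{construcpade} together with the norm and remainder estimates of Lemmas~\ref{majonorme} and \ref{upper jyouyonew} will eventually furnish the inputs $U$, $\mathbb{A}$ and $F_v$ needed to feed the criterion for the proof of Theorem~\ref{Lerch}.
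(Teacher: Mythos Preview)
Your argument is correct and follows essentially the same route as the paper: pick a row index $l$ for which $\ell_l^{(n)}=\sum_j a_{l,j}^{(n)}\lambda_j\neq 0$ using invertibility of $\mathrm{M}_n$, bound $|\ell_l^{(n)}|_{v_0}$ from below via the product formula and from above via the decomposition $\ell_l^{(n)}=a_{l,0}^{(n)}\Lambda-\sum_j r_{l,j}^{(n)}\lambda_j$, then choose $n$ minimal so that the remainder term is swallowed by half the lower bound. The paper makes the same choice of $n$ (phrased as the least $n$ with $\mathrm{h}(\boldsymbol{\lambda})-n(V-\varepsilon)\le -\log 2$) and tracks the constants exactly as you indicate to arrive at the stated $C(\varepsilon)$ and $\mu(\varepsilon)$.
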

\begin{proof}
We may assume $\theta_0=1$.
Let ${{\boldsymbol{\lambda}:=^t\kern-5pt(\lambda_0,\ldots,\lambda_m)}}\in K^{m+1}\setminus \{\bold{0} \}$. 
Define 
$\Lambda({{\boldsymbol{\lambda}}},\boldsymbol{\theta})=\sum_{i=0}^m{{\lambda_i}}\theta_i$.
Since ${{\mathrm{M}}}_n$ is invertible and ${{\boldsymbol{\lambda}}}\neq \bold{0}$, some coordinate (say the $l_n$-th coordinate) $B_{l_n}:=\sum_{j=0}^{m}a^{(n)}_{l_n,j}{{\lambda_j}}\neq 0$ of ${{\mathrm{M}}}_n{{\boldsymbol{\lambda}}}$ does not vanish.
We put $r^{(n)}_{l,j}:=a^{(n)}_{l,0}\theta_j-a^{(n)}_{l,j}$ for $1\le j \le m$ and $0\le l \le m$.
Then by definition, we obtain
$$
a^{(n)}_{l_n,0}\Lambda({{\boldsymbol{\lambda}}}, \boldsymbol{\theta})=B_{l_n}+\sum_{j=1}^{m}r^{(n)}_{l_n,j}{{\lambda_j}}\enspace.
$$
Using the product formula for $B_{l_n}\in K\setminus\{0\}$, we have
\begin{equation} \label{upper infty}
0=\sum_{v\neq v_0}\log\left\vert B_{l_n}\right\vert_v+\log\left\vert a^{(n)}_{l_n,0}\Lambda({{\boldsymbol{\lambda}}},\boldsymbol{\theta})-\sum_{j=1}^{m}r^{(n)}_{l_n,j}{{\lambda_j}}\right\vert_{v_0}\enspace.
\end{equation}
We first evaluate using the inequalities $(\ref{upper Anrj})$ and $(\ref{upper Rrj})$
{{
\begin{align*}
&\log\left\vert B_{l_n} \right\vert_{v}\leq {\mathrm{h}}_v({{\boldsymbol{\lambda}}})+{{F_v}}(n)+{\varepsilon_v \log(m+1)}\ \ \text{for} \ v\in {{\mathfrak{M}}}_K\enspace,\\
&\log\left\vert\sum_{j=1}^{m}r^{(n)}_{l_n,j}{{\lambda_j}}\right\vert_{v_0}\leq {\mathrm{h}}_{v_0}({{\boldsymbol{\lambda}}})-\mathbb{A}n+o(n) +\varepsilon_{v_0}\log(m)\enspace.
\end{align*}
}}
We now fix a positive number $\varepsilon$ with $V>\varepsilon$, and assume that $n$ is large enough so that  {all $\left\vert o(n)\right\vert\leq\varepsilon n/4$},  
$[K:\Q]\cdot {\rm{log}}(m+1)\le \varepsilon n /4$, $0<-\log(2)+n(V-\varepsilon)$ and $\left\vert \displaystyle{\sum_{v\neq v_0}}F_v(n)-\mathbb{B}n\right\vert\leq \varepsilon n/4$. 
We now fix such $n^*$ and take ${\mathrm{h}}_0=-\log(2)+n^*(V-\varepsilon)$. 
Take $\boldsymbol{\lambda}\in K^{m+1}\setminus \{\bold{0}\}$ with ${\mathrm{h}}(\boldsymbol{\lambda})\ge {\mathrm{h}_0}$. Let $n=n(\boldsymbol{\lambda})$ be the minimal positive integer with 
${\mathrm{h}}({{\boldsymbol{\lambda}}})-n(V-\varepsilon)\leq -{\rm{log}}(2)$. 
Note that $n\ge n^*$ and $n<\frac{{\mathrm{h}}({{\boldsymbol{\lambda}}})}{V-\varepsilon}+\frac{\log(2)}{V-\varepsilon}+1$. 
{With these conventions,
$$-\sum_{v\neq v_0}\log\vert B_{l_n}\vert_v\geq -\sum_{v\neq v_0} \mathrm{h}_v(\boldsymbol{\lambda})-\sum_{v\neq v_0}F_v(n)-{{\sum_{v\neq v_0}}}\varepsilon_v\log(m+1)\geq -\sum_{v\neq v_0}\mathrm{h}_{v}(\boldsymbol{\lambda})-\mathbb{B}n-\varepsilon n/2\enspace.$$
Plugging in this inequality in~(\ref{upper infty}),
$${\small{{{-\sum_{v\neq v_0}\mathrm{h}_v(\boldsymbol{\lambda})}}-\mathbb{B}n-\varepsilon n/2\leq 
\log\left\vert a^{(n)}_{l_n,0}\Lambda({{\boldsymbol{\lambda}}},\boldsymbol{\theta})-\sum_{j=1}^{m}r^{(n)}_{l_n,j}{{\lambda_j}}\right\vert_{v_0}\leq \log\left( \left\vert a^{(n)}_{l_n,0}\Lambda({{\boldsymbol{\lambda}}},\boldsymbol{\theta})\right\vert_{v_0}+\left\vert\sum_{j=1}^{m}r^{(n)}_{l_n,j}{{\lambda_j}}\right\vert_{v_0}\right)}}\enspace.$$

One notes  ${\mathrm{h}}({{\boldsymbol{\lambda}}})-n(V-\varepsilon)\leq -{\rm{log}}(2)$ translates into
$$\mathrm{h}_{v_0}(\boldsymbol{\lambda})-\mathbb{A}n+\varepsilon_{v_0}\log(m)\leq -\sum_{v\neq v_0}\mathrm{h}_v(\boldsymbol{\lambda})-\mathbb{B}n-{{3}}n\varepsilon/4-\log(2)\enspace.$$

This implies $\log\left\vert\sum_{j=1}^{m}r_{l_n,j}^{(n)}\lambda_j\right\vert\leq -\sum_{v\neq v_0}\mathrm{h}_v(\boldsymbol{\lambda})-\mathbb{B}n-{{3}}n\varepsilon/4-\log(2)$. We now use the elementary inequality valid for any positive real numbers {{$x,y$ with}} $\log(x+y)\geq a$ and $\log(y)\leq a-\log(2)$ implies $\log(x)\geq a-\log(2)$ and deduce
$$-\sum_{v\neq v_0}\mathrm{h}_v(\boldsymbol{\lambda})-\mathbb{B}n-{{3n\varepsilon/4}}-\log(2)\leq \log\left\vert a_{l_n,0}^{(n)}\Lambda(\boldsymbol{\lambda},\boldsymbol{\theta})\right\vert_{v_0}\leq Un+\varepsilon n/4+\log\left\vert\Lambda(\boldsymbol{\lambda},\boldsymbol{\theta})\right\vert_{v_0} \enspace.$$

Now,
$$
\begin{array}{lcl} \displaystyle 
 \sum_{v\neq v_0}\mathrm{h}_v(\boldsymbol{\lambda})+\mathbb{B}n+Un+{{n\varepsilon}}+\log(2)
& = & \displaystyle  \mathrm{h}(\boldsymbol{\lambda})-\mathrm{h}_{v_0}(\boldsymbol{\lambda})
+(\mathbb{A}+U)n-n(V-\varepsilon)-\varepsilon n+\log(2)
\\ & \leq  &  \displaystyle  \frac{(\mathbb{A}+U)\mathrm{h}(\boldsymbol{\lambda})}{V-\varepsilon}
+\left(\frac{\log(2)}{V-\varepsilon}+1\right)\left(\mathbb{A}+U\right)- \mathrm{h}_{v_0}(\boldsymbol{\lambda})\enspace.
\end{array}$$
All in all,
$$\log\left\vert\Lambda(\boldsymbol{\lambda},\boldsymbol{\theta})\right\vert_{v_0} 
\geq \mathrm{h}_{v_0}(\boldsymbol{\lambda})-\mu(\varepsilon)\mathrm{h}(\boldsymbol{\lambda})-\log C(\varepsilon)\enspace.$$
This completes the proof of the proposition.}
\end{proof}
To prove Theorem $\ref{Lerch}$, we show the following more precise theorem.

\begin{theorem} \label{Lerch 2}{
We use the same notations as in Theorem $\ref{Lerch}$. For any place  $v\in {{\mathfrak{M}}}_K$, we define the constants
\begin{align*}
c(x,v)=\varepsilon_{v}{{\frac{d_v}{d}}}\left[\const\right]-(1-\varepsilon_{v})r\log\vert\mu(x)\vert_{v} \enspace.
\end{align*}
{{We also define
\begin{align*}
\mathbb{A}(\boldsymbol{\alpha},\beta)&=\mathbb{A}_{v_0}(\boldsymbol{\alpha},\beta)=\log\vert \beta \vert_{v_0}-(rm+1)\log\Vert \boldsymbol{\alpha}\Vert_{v_0}-c(x,v_0)+{{r(1-\varepsilon_{v_0}){\lim_{n\rightarrow\infty}\frac{{\rm{log}}\vert d_{brm(n+1)+a}\vert_{v_0}}{n}}}}
\enspace,\\U(\boldsymbol{\alpha},\beta)&=U_{v_0}(\boldsymbol{\alpha},\beta)={rm{\mathrm{h}}_{v_0}({\boldsymbol\alpha},\beta)+c(x,v_0)\enspace,\kern-3pt}
\end{align*}
and
$$\begin{array}{lcl}\displaystyle 
V(\boldsymbol{\alpha},\beta)&= V_{v_0}(\boldsymbol{\alpha},\beta)  = &\displaystyle  \log\vert\beta\vert_{v_0}-rm{\mathrm{h}}(\boldsymbol{\alpha},\beta)-{{(rm+1)}}\log\Vert \boldsymbol{\alpha}\Vert_{v_0}+rm\log\Vert(\boldsymbol{\alpha},\beta)\Vert_{v_0}\\ &{\hspace{17mm}}-&\displaystyle\left[\const\right]-r\log\mu(x)-br^2m\enspace\rule{0mm}{6mm},
\end{array}$$
where $b={\rm{den}}(x)$.}}

\

{{Let $v_0$ be a place in $\mathfrak{M}_K$,  either archimedean or non-archimedean,  such that
$V_{v_0}(\boldsymbol{\alpha},\beta)>0$.
Then the functions $\Phi_s(x,z), \,\,1\leq s \leq r$ converge around $\alpha_j/\beta$ in $K_{v_0}$,  $1\leq j \leq m$ and}}
for any positive number $\varepsilon$ with $\varepsilon<V(\boldsymbol{\alpha},\beta)$, there exists an effectively computable positive number $H_0$ depending on $\varepsilon$ and the given data such that the following property holds.
For any ${{\boldsymbol{\lambda}}}:=({{\lambda_0}},{{\lambda_{i,s}}})_{\substack{1\le i \le m \\ 1\le s \le r}} \in K^{rm+1} \setminus \{ \bold{0} \}$ satisfying $H_0\le {\mathrm{H}}({{\boldsymbol{\lambda}}})$, then we have
\begin{align*}
\left|{{\lambda_0}}+\sum_{i=1}^m\sum_{s=1}^{r}{{\lambda_{i,s}}}\Phi_{s}(x,\alpha_i/\beta)\right|_{v_0}>C(\boldsymbol{\alpha},\beta,\varepsilon){\mathrm{H}}_{v_0}({{\boldsymbol{\lambda}}}) {\mathrm{H}}({{\boldsymbol{\lambda}}})^{-\mu(\boldsymbol{\alpha},\beta,\varepsilon)}\enspace,
\end{align*}
where 
\begin{align*}
\mu(\boldsymbol{\alpha},\beta,\varepsilon):=
\dfrac{\mathbb{A}(\boldsymbol{\alpha},\beta)+{{U}}(\boldsymbol{\alpha},\beta)}{V(\boldsymbol{\alpha},\beta)-\varepsilon} \enspace \mbox{and} \enspace C(\boldsymbol{\alpha},\beta,\varepsilon)=\exp\left(-{{\left(\frac{\log(2)}{V(\boldsymbol{\alpha},\beta)-\varepsilon}+1\right)}}(\mathbb{A}(\boldsymbol{\alpha},\beta)+{{U}}(\boldsymbol{\alpha},\beta)\right)\enspace.
\end{align*}}
\end{theorem}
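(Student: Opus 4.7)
The proof is to apply the linear independence criterion of Proposition~\ref{critere version II} (with the integer $m$ of that proposition replaced by $rm$) to the Pad\'e approximant system constructed in Theorem~\ref{Pade appro Lerch}, evaluated at $z=\beta$. Set $\theta_0=1$ and label the remaining $rm$ entries of $\boldsymbol{\theta}$ by pairs $(i,s)$ with $\theta_{(i,s)}=\Phi_s(x,\alpha_i/\beta)$; the hypothesis $\|\boldsymbol{\alpha}\|_{v_0}<|\beta|_{v_0}$ guarantees the convergence of these series in $K_{v_0}$. Let $\mathrm{M}_n$ denote the $(rm+1)\times(rm+1)$ matrix whose rows are ${}^t\vec{p}_{n,l}(\beta)$ for $0\le l\le rm$. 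By Proposition~\ref{non zero det}, $\det \mathrm{M}_n=\Delta_n\in K\setminus\{0\}$, so $\mathrm{M}_n\in \mathrm{GL}_{rm+1}(K)$ for every $n$, and the invertibility hypothesis of the criterion holds.

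Next I would establish the upper bound on the first column entries $a^{(n)}_{l,0}=P_{n,l}(\beta)$. Combining Lemma~\ref{majonorme}(i) with the multiplicative height inequality~(\ref{estimmulti}) and using $\deg_{\alpha_i}P_{n,l}=rn$ together with $\deg_z P_{n,l}\le rm(n+1)$ yields
$$|P_{n,l}(\beta)|_{v_0}\le \exp\bigl(U(\boldsymbol{\alpha},\beta)n+o(n)\bigr)\enspace,$$
which is the required bound~(\ref{upper Anr0}). The bound~(\ref{upper Rrj}) on the remainders $R_{n,l,i,s}(\beta)=P_{n,l}(\beta)\Phi_s(x,\alpha_i/\beta)-P_{n,l,i,s}(\beta)$ is then read off directly from Lemma~\ref{upper jyouyonew} applied at $v_0$, with rate of decay $\mathbb{A}(\boldsymbol{\alpha},\beta)$.

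For the bound~(\ref{upper Anrj}) on $\|\mathrm{M}_n\|_v$ at the remaining places, I would apply Lemma~\ref{majonorme}(i)--(ii) to both $P_{n,l}$ and $P_{n,l,i,s}$, again converted into bounds at $\beta$ via~(\ref{estimmulti}). Summing the resulting $F_v(n)$ over $v\neq v_0$ and invoking the product formula $\sum_v \mathrm{h}_v(\gamma)=\mathrm{h}(\gamma)$ for $\gamma=\alpha_i,\beta$, together with the arithmetic identities for $\mu_n(x)$ and $d_N$ supplied by Lemma~\ref{denominator 1}(iii)--(iv), yields an explicit constant $\mathbb{B}$. A direct algebraic check then confirms $V(\boldsymbol{\alpha},\beta)=\mathbb{A}(\boldsymbol{\alpha},\beta)-\mathbb{B}$; Proposition~\ref{critere version II} then produces the claimed linear independence measure with the stated constants $\mu(\boldsymbol{\alpha},\beta,\varepsilon)$ and $C(\boldsymbol{\alpha},\beta,\varepsilon)$.

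The main obstacle I expect is the careful bookkeeping in the place-by-place summation: collecting, on the one hand, the archimedean combinatorial factor $c(x,v)$ coming from Lemma~\ref{majonorme}(v) (which survives at all archimedean places, with only the $v_0$-copy being absorbed into $\mathbb{A}$), and, on the other hand, the finite-place contributions $(1-\varepsilon_v)\log|\mu(x)|_v$ and $s\log|d_{brm(n+1)+a}|_v$, and verifying that their total telescopes to the $\log\mu(x)$-type terms (with the correct power $r$, coming from the $r$-fold application of $S_n$ in~(\ref{Qnl})) and to the $br^2m$-type remainders stated in $V(\boldsymbol{\alpha},\beta)$. The residual $o(n)$ contributions from $C_v(P)$ (polynomial in $n$) and from the Stirling estimate $n!\ge(n/e)^n$ are absorbed automatically. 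All remaining steps are mechanical consequences of the lemmas established in Section~\ref{analy}.
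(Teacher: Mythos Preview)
Your proposal is correct and follows essentially the same approach as the paper: apply Proposition~\ref{critere version II} to the matrix $\mathrm{M}_n=(\vec{p}_{n,l}(\beta))_l$, with invertibility from Proposition~\ref{non zero det}, the bound on $|P_{n,l}(\beta)|_{v_0}$ from Lemma~\ref{majonorme}(i) plus~(\ref{estimmulti}), the remainder bound from Lemma~\ref{upper jyouyonew}, and the place-sum computed via Lemma~\ref{denominator 1} and the prime number theorem to recover $V(\boldsymbol{\alpha},\beta)=\mathbb{A}-\mathbb{B}$. One small slip: the combinatorial factor you attribute to ``Lemma~\ref{majonorme}(v)'' is actually derived inside the proof of Lemma~\ref{majonorme}(i) from Lemma~\ref{fundamental}(v); otherwise your bookkeeping sketch matches the paper's.
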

\begin{proof}
By Proposition $\ref{non zero det}$, the matrix ${{\mathrm{M}}}_n=\begin{pmatrix}
P_{l}(\beta)\\
P_{l,i,s}(\beta)
\end{pmatrix}$ 
with entries in $K$ is invertible. We can apply Proposition~\ref{critere version II}. 
We have by Lemma~\ref{majonorme}, (i) together with inequality~(\ref{estimhomo}) and Lemma~\ref{denominator 1}
$$\begin{array}{lcl}\displaystyle
\log\Vert  P_{l}(\beta)\Vert_{v} & \leq& \displaystyle \varepsilon_v\left(n{\frac{d_v}{d}}{\left[\const\right]}+o(1)\rule{0mm}{4mm}\right)\\
& + &(1-\varepsilon_v)\log\left\vert\mu_n(x)\right\vert_v^{-{{r}}}+\displaystyle {(rmn+l){\mathrm{h}}_{v}({\boldsymbol \alpha},\beta)}\rule{0mm}{6mm}
\\ & \leq   &\rule{0mm}{6mm}  \displaystyle n\left({rm{\mathrm{h}}_{v}({\boldsymbol\alpha},\beta)}+c(x,v)\right)+o(n)
\\ & = &\rule{0mm}{6mm} \displaystyle {{U}}(\boldsymbol{\alpha},\beta)n+o(n)\enspace,
\end{array}$$
where $o(1)\longrightarrow 0$ ($n\rightarrow +\infty$). Similarly, using this time Lemma~\ref{majonorme} (ii), 
$$\begin{array}{lcl}\displaystyle
\log\Vert  P_{l,i,s}(\beta)\Vert_{v} & \leq& \displaystyle \varepsilon_v\left(n{{\frac{d_v}{d}}}{\left[\const\right]}+o(1)\rule{0mm}{4mm}\right)\\
& + & (1-\varepsilon_v)\log\left\vert\mu_n(x)\right\vert_v^{-{{r}}}+\rule{0mm}{6mm}\displaystyle{+(rmn+l){\mathrm{h}}_{v}({\boldsymbol\alpha},\beta)}-s(1-\varepsilon_v)\log\vert d_{brm(n+1)+a}\vert_v
\\ & \leq   &  \rule{0mm}{6mm}\displaystyle n\left({rm{\mathrm{h}}_{v}({\boldsymbol\alpha},\beta)}+c(x,v)\right)+f_v(n)\enspace,
\end{array}$$
where
\begin{align*}
f_v:\N\rightarrow \R_{\ge0}; \ \ n\longmapsto {lh_v({\boldsymbol\alpha},\beta)}-r(1-\varepsilon_v)\log\left(\vert d_{brm(n+1)+a}\vert_v\right)\enspace.
\end{align*}
We define $$\displaystyle {{F_v}}(\boldsymbol{\alpha},\beta),\N\rightarrow \R_{\ge0}; \ n\mapsto n\left({rm{\mathrm{h}}_{v}({\boldsymbol\alpha},\beta)+c(x,v)}\right)+f_v(n)\enspace.$$
Since on the other hand, Lemma~\ref{upper jyouyonew} ensures 
{\small{\begin{align*}
-\log\vert  R_{l,i,s}( \beta)\vert_{v_0}&\leq n\log\vert \beta \vert_{v_0}-(rm+1)n\log\Vert \boldsymbol{\alpha}\Vert_{v_0}\\
&-\kern-2pt
\begin{cases}
\dfrac{nd_v}{d}{\const}+o(n) & \kern-3pt \text{if}\    v_0\in \mathfrak{M}^{\infty}_K \\
\left\vert d_{brm(n+1)+a}\right\vert_{v_0}^{-r}\left\vert\mu_n(x)\right\vert_{v_0}^{-r}  +o(n) &\kern-3pt \text{if} \ v_0 \in \mathfrak{M}^f_K
\end{cases}\\
&=n\log\vert \beta \vert_{v_0}-(rm+1)n\log\Vert \boldsymbol{\alpha}\Vert_{v_0}-c(x,v_0)n+r(1-\varepsilon_{v_0}){\rm{log}}\vert d_{brm(n+1)+a}\vert_{v_0}\\
&=\mathbb{A}(\boldsymbol{\alpha},\beta)n+o(n)\enspace,
\end{align*}}}
we can apply Proposition~\ref{critere version II} for $\{\theta_{i,s}:=\Phi_{s}(x,\alpha_i/\beta)\}_{\substack{1\le i \le m \\ 1\le s \le r}}$ and the above data, 
we obtain the assertions of Theorem $\ref{Lerch 2}$. 
Using {{Lemma $\ref{denominator 1}$ $(iv)$}}, we have ${\displaystyle{\lim_{n\to \infty}}}1/n{\displaystyle{\sum_{v\in \mathfrak{M}_K}}}f_v(n)=br^2m$ and
$$
\sum_{v\in \mathfrak{M}_K}c(x,v)={\const}+{\rm{log}}\mu(x)\enspace,
$$
we conclude
{$$\begin{array}{lcl}\displaystyle
\mathbb{A}(\boldsymbol{\alpha},\beta)-\lim_{n\to \infty}\dfrac{1}{n}\sum_{v\neq v_0}{{F_v}}(\boldsymbol{\alpha},\beta)(n) & = & \displaystyle {\rm{log}}|\beta|_{v_0}-rm{\mathrm{h}}({\boldsymbol\alpha},\beta)+rm\log\Vert({\boldsymbol\alpha},\beta)\Vert_{v_0}\\ & 
- & \displaystyle(rm+1){\rm{log}}||\boldsymbol{\alpha}||_{v_0}
-\sum_{{v}}c(x,v){{-br^2m}} = V(\boldsymbol{\alpha},\beta)\enspace. %\nonumber\enspace.
\end{array}$$}
\end{proof} 

{{\noindent \bf End of the proof of Theorem~\ref{Lerch}.}~
\\
As was already noted, this is an immediate consequence of Theorem~\ref{Lerch 2}~;  to be more precise, choose some $\boldsymbol{\lambda}=(\lambda_0,\lambda_{1,1}\ldots,\lambda_{r,m})\in K^{rm+1}\setminus\{\bold{0}\}$ such that $\lambda_0+\sum_{i=1}^{m}\sum_{s=1}^{r}\lambda_{i,s}\Phi_{s}(x,\alpha_i/\beta)=0$. 

If $H(\boldsymbol{\lambda})\geq H_0$ (where $H_0$ is as in Theorem~\ref{Lerch 2}), there is nothing more to prove. Otherwise, let $q>0$ be a rational integer such that $H(q\boldsymbol{\lambda})\geq H_0$. Then Theorem~\ref{Lerch 2} ensures that
$$q\left(\lambda_0+\sum_{i=1}^{m}\sum_{s=1}^{r}\lambda_{i,s}\Phi_{s}(x,\alpha_i/\beta)\right)\neq 0\enspace.$$ This completes the proof of Theorem~\ref{Lerch}. \qed}

{\subsection{Comparison with the main result of \cite{DHK2}}
{{
We recall the main Theorem in \cite{DHK2}.
Under the same notations as in Theorem $\ref{Lerch 2}$, we fix an embedding $\iota:K\hookrightarrow \C$ and denote the corresponding archimedean place by $v_0$. %and $\sigma(\alpha)=\alpha$ for $\alpha\in K$.
We define 
\begin{align*} 
&D(\boldsymbol{\alpha},\beta):={\rm{den}}(\alpha_1,\ldots,\alpha_m,\beta)\enspace,\\
&\mathbb{A}(\boldsymbol{\alpha},\beta,x):={\rm{log\,}}|\beta|-(rm+1){\rm{log\,}}\max_i(|\alpha_i|)-\{rm({\rm{log\,}}D(\boldsymbol{\alpha},\beta)+r[b+{\rm{log\,}}(5/2)])+r({\rm{log\,}}3+{\rm{log\,}}\mu(x))\}\enspace,\\
&\mathcal{A}^{(g)}(\boldsymbol{\alpha},\beta,x):=rm\left({\rm{log\,}}D(\boldsymbol{\alpha},\beta)+{\rm{log\,}}\max(1, \min(|\alpha^{(g)}_i|)^{-1}\cdot|\beta^{(g)}|)+r[b-{\rm{log\,}}2]\right)\\
&\ \ \ \ \ \ \ \ \ \ +r\left({\rm{log\,}}\mu(x)+\sum_{i=1}^m{\rm{log\,}}(2^r|\alpha^{(g)}_i|+3^r\max_{1\le i'\le m}(|\alpha^{(g)}_{i'}|,|\beta^{(g)}|))+{\rm{log\,}}3 \right)
 \ \ \ \ \text{for} \ \  1\le g\le d \enspace,
\end{align*}
where $\alpha^{(g)}_i$ (resp. $\beta^{(g)}$) is a conjugate of $\alpha_i$ (resp. $\beta$) for $1\le g \le d$ with $\alpha^{(1)}_i=\alpha_i$ and $\alpha^{(2)}_i$ is the complex conjugate of $\alpha_i$ if $K_{v_0}=\C$ and
$$
{M}(\boldsymbol{\alpha},\beta,x):=\mathbb{A}(\boldsymbol{\alpha},\beta,x)+\mathcal{A}^{(1)}(\boldsymbol{\alpha},\beta,x)-\dfrac{\sum_{g=1}^{d}\mathcal{A}^{(g)}(\boldsymbol{\alpha},\beta,x)}{d_{v_0}}\enspace.
$$
\begin{theorem} $(${\rm{\cite[Theorem $2.1$]{DHK2}}}$)$ \label{Moscow}
Assume ${M}(\boldsymbol{\alpha},\beta,x)>0$.
Then the $rm+1$ numbers~$:$
$$1,\Phi_1(x,\alpha_1/\beta),\ldots,\Phi_r(x,\alpha_1/\beta),\ldots, \Phi_1(x,\alpha_m/\beta),\ldots,\Phi_r(x,\alpha_m/\beta)\enspace,$$ are linearly independent over $K$.
\end{theorem}
}}
We are now in a position to compare this result with our Theorem~\ref{Lerch}
\begin{proposition}\label{compamoscow} 
Assume $r\geq 7$ or $r=5,6$ and $m\geq 2$ or $r=4, m\geq 3$, $r=3,m\geq 6$ $r=2,m\geq 36$, then
$$V(\boldsymbol{\alpha},\beta)\geq \frac{d_{v_0}}{d}M(\boldsymbol{\alpha},\beta,x)\enspace,$$
in other words, the present result $($Theorem $\ref{Lerch})$ is stronger than Theorem $\ref{Moscow}$.
\end{proposition}
\begin{proof}
We put $u(m,r,x)=r^2m[b-\log2]+r\log(3)+r\log\mu(x)$ and estimate
\begin{align*}
&\displaystyle\frac{1}{d}\sum_{g=1}^d\mathcal{A}^{(g)}(\boldsymbol{\alpha},\beta,x) 
\geq \displaystyle r mh_{\infty}(\boldsymbol{\alpha},\beta)+rm\log D(\boldsymbol{\alpha},\beta)+rmh_{\infty}(1,\boldsymbol{\alpha}^{-1}\beta)+rm\log( 3^r)+u(r,m,x)\enspace.
\end{align*}
We also put $v(m,r,x)=\frac{d_{v_0}}{d}\left(rm\left(r[b+{\rm{log\,}}(5/2)])+r({\rm{log\,}}3+{\rm{log\,}}\mu(x))\right)\right)$ and estimate
\begin{align*}
\displaystyle \frac{d_{v_0}}{d}\left(\mathbb{A}(\boldsymbol{\alpha},\beta,x)+\mathcal{A}^{(1)}(\boldsymbol{\alpha},\beta,x)\right)&\leq 
\displaystyle\log|\beta|_{v_0}-(rm+1)\log\Vert\boldsymbol{\alpha}\Vert_{v_0}+rm\log\Vert (1,\boldsymbol{\alpha}^{-1}\beta)\Vert_{v_0}\\
&+rm\log\Vert (\boldsymbol{\alpha},\beta)\Vert_{v_0}- \displaystyle v(m,r,x)+ \displaystyle \frac{d_{v_0}}{d} u(m,r,b,x)+\frac{d_{v_0}}{d}rm\log(2\cdot 3^r)\enspace.
\end{align*}
Then we get
\begin{align*}
\frac{d_{v_0}}{d}M(\boldsymbol{\alpha},\beta) & \leq
\log|\beta|_{v_0}-(rm+1)\log\Vert\boldsymbol{\alpha}\Vert_{v_0}-rm \log D(\boldsymbol{\alpha},\beta)+\displaystyle w(m,r,x)\\
&-rm\left(h_{\infty}(\boldsymbol{\alpha},\beta)-\log\Vert (\boldsymbol{\alpha},\beta)\Vert_{v_0} +h_{\infty}(1,\boldsymbol{\alpha}^{-1}\beta)
-\log\Vert (1,\boldsymbol{\alpha}^{-1}\beta)\Vert_{v_0}\right)\enspace,
\end{align*}
{{where}}
\begin{align*}
&w(m,r,x)=-v(m,r,x)+\frac{d_0}{d} u(m,r,x)+\frac{d_0}{d}rm\log(2\cdot 3^r)-rm\log( 3^r)-u(r,m,x)\enspace,\\
&c(r,m,x)=\displaystyle\const+r\log\mu(x)+br^2m\enspace\rule{0mm}{6mm}\enspace,\\
&H(r,m,x)=-c(r,m,x) -w(r,m,x)\enspace.
\end{align*}
Then we obtain 
{\small{$$V(\boldsymbol{\alpha},\beta)-\frac{d_{v_0}}{d}M(\boldsymbol{\alpha},\beta,x)\geq rm\left(h_{\infty}(\boldsymbol{\alpha},\beta)+ \log D(\boldsymbol{\alpha},\beta)-h(\boldsymbol{\alpha},\beta)+h_{\infty}(1,\boldsymbol{\alpha}^{-1}\beta)-\log\Vert (1,\boldsymbol{\alpha}^{-1}\beta)\Vert_{v_0}\right)+H(r,m,x)\enspace.$$}}
Since one has trivially
$$h_{\infty}(\boldsymbol{\alpha},\beta)+ \log D(\boldsymbol{\alpha},\beta)-h(\boldsymbol{\alpha},\beta)>0\enspace; h_{\infty}(1,\boldsymbol{\alpha}^{-1}\beta)-\log\Vert (1,\boldsymbol{\alpha}^{-1}\beta)\Vert_{v_0}>0\enspace,$$
proving the proposition boils down to proving $H(r,m,x)>0$.
We first assume $r\geq 2$  then, the right hand side is increasing in $d_{v_0}/d$ and we can assume $d_{v_0}/d=0$.
We get,
$$H(m,r,x)\geq r^2m\log(3/2)+r\log(3)-rm\log(2)-r\log(rm+1)-r^2m\log\left(\frac{rm+1}{rm}\right)\enspace.$$
One then shows (elementary but tedious computation) that the right hand side is increasing in $m$ for $r\geq 4$ and get the proposition by checking the smallest value of $m$ for which it gets positive. 
For $m=2,3$, one shows that the right hand side is increasing in $m$ for $m$ large enough and dispose of the (finitely many) remaining case by case by case testing. 
This completes the proof of the proposition.
\end{proof}
\begin{remark}
\label{diago}
Using the facts that the maps $S_{n,x}$ and $\inte_x$ act diagonally on the canonical basis of the polynomials, one can get better bounds for $\Vert P_l\Vert_v$, $\Vert P_{l,i,s}\Vert_v$ and $|R_{l,i,s}(\beta)|_v$, by replacing the sub\-multiplicativity of the norms by more precise estimates for each individual eigenvalue in the proof of Lemma~\ref{majonorme}. This refinement would be enough to prove Theorem~\ref{Lerch} is stronger than Theorem~\ref{Moscow} as soon as $r\geq 2$. However, it would not be enough to say anything meaningful for\footnote{Indeed, it can be shown that when $r=1$ neither theorem implies the other.} $r=1$. Our intent {in the present paper is} to provide reasonably good analytic estimates (see subsection~\ref{comphata} below) with minimal technical complications and of course a better dependence in the main parameters $\beta$ and $\boldsymbol{\alpha}$.
\end{remark}

\subsection{Comparison with the bounds of M. Hata}
\label{comphata}
{\begin{remark} 
We compare Theorem $\ref{Lerch 2}$ in the case of $m=1$ and $x=0$, and $K=\qu$ with the result of M. Hata \cite{Ha}. 
{We set $E(r)=\frac{e^{r^2}r^r(e^{-r}+1)^{r+1}}{(r+1)^{r+1}}$.}
Below we cite the corresponding theorem due to Hata\footnote{We present his simplified version, when $b>0$. Hata's bound are slightly better when $b<0$, however it does not quite change our discussion.}~:\\
\textbf{Theorem of Hata} {\rm{\cite[Corollary $2.2$]{Ha}}}\\
{\it Let $r$ be a natural number and $b$ a non-zero integer. 
Put 
$$\tilde{V}(b):=\log|b|-\log (E_r)\enspace.$$
Assume $\tilde{V}(b)>0$. 
Then the $r+1$ real numbers $1,{\rm{Li}}_1(1/b),\ldots, {\rm{Li}}_r(1/b)$ are linearly independent over $\Q$.}

{Let $b>0$ be an integer. We consider the case of $m=1$, $\alpha_1=1$, $x=0$, $K=\Q$ and $v_0=\infty$. Then, our Theorem~\ref{Lerch 2} asserts that setting 
$$V_{\infty}(1,b)=  \log\vert b \vert_{\infty}-r^2-{{r\left(\log(2)+\log(r+1)+r\log\left(\dfrac{r+1}{r}\right)\right)}}\enspace,$$
the numbers $1,{\rm{Li}}_1(1/b),\ldots, {\rm{Li}}_r(1/b)$ are linearly independent over $\Q$ provided $V_{\infty}(1,b)>0$.
Hence, to compare both results, one needs only to compare $\tilde{V}(b)$ and $V_{\infty}(1,b)$.
One easily checks that for all $r\geq 1$, 
$$\log(E_r)\leq  r^2+{{r\left(\log(2)+\log(r+1)+r\log\left(\dfrac{r+1}{r}\right)\right)}}\enspace,$$
Hence, when specialized to this particular case, Hata's result is always stronger then ours. Interestingly, they are not that far apart for large $r$, indeed, 
{
$$Q(r)=\frac{\left(r^2+{{r\left(\log(2)+\log(r+1)+r\log\left(\dfrac{r+1}{r}\right)\right)}}\right)-\log(E_r)}{\log(E_r)}=O\left(\frac{\log(r)}{r}\right)\enspace.$$}}

For convenience, we record some special values

{{\small{$$\begin{array}{|c|c|c|c|c|c|c|c|}\hline
& r=1 & r=2 & r=3 & r=4 & r=10 & r=100 & r=1000\\\hline
\log(E_r)& 0.2402& 2.4712 & 6.945 & 13.5887 & 96.6495 & 9994.38& 999992.09 \\
\hline
{\small{r^2+{{r\left(\log(2)+\log(r+1)+r\log\left(\dfrac{r+1}{r}\right)\right)}}}} & 3.07944 & 9.20537 & 17.8274 & 28.7806 & 140.4414 & 10630.33 & 1008601.4\\\hline
Q(r) &11.8187& 2.725 & 1.5669 & 1.1179 & 0.4531 & 0.06362 & 0.0086\\ 
\hline
\end{array}$$}}}
\end{remark}}

\section{Examples of Theorem $\ref{Lerch}$}
\begin{example} 

Put $r=m=10$ and $x=0$. We consider $\boldsymbol{\alpha}:=(1,1/2,\ldots, 1/10)$, $K=\Q$ and $v_0=\infty$.
Let $b$ be an integer. {Then we have
\begin{align*}
V(\boldsymbol{\alpha},b)&=\log|b|-\left[100\log(2520)+100\log(2)+10\left(\log(101)+100\log\left(\dfrac{101}{100}\right)\right)\right]-1000\\
&>\log|b|-1908.6176\enspace.
\end{align*}}

{{Assume {{${\rm{log}}\,|b|>1908.6176$. }}
Then, by Theorem $\ref{Lerch 2}$, the $10^2+1$ numbers~$:$
$$1,{\rm{Li}}_{1}(1/b),\ldots,{\rm{Li}}_{10}(1/b),\ldots, {\rm{Li}}_{1}(1/(10b)),\ldots, {\rm{Li}}_{10}(1/(10b))\enspace,$$ are linearly independent over $\Q$.
}}

\end{example}
\begin{example} \label{ex 2}
Let $r,m$ be a positive integer. We consider $\boldsymbol{\alpha}:=(1,1/2,\ldots, 1/m)$.
Let $\beta$ be a root of $X^2-X-1$ and $x=0$. Put $K:=\Q(\beta)$. Let $v_0$ be the infinite place of $K$ satisfying $|\beta|_{v_0}>1$. 
In this case, we have 
${\rm{log}}|\beta|_{v_0}=\tfrac{1}{2}{\rm{log}}\left(\tfrac{1+\sqrt{5}}{2}\right).$
Let $M$ be a positive integer. Then we have
{\small{{\begin{align*}
V(\boldsymbol{\alpha},\beta^M)=\dfrac{M}{2}{\rm{log}}\left(\dfrac{1+\sqrt{5}}{2}\right)-rm\log\,(d_m)-\const-r^2m\enspace.
\end{align*}}}}
If $M$ is sufficiently large, the real number $V(\boldsymbol{\alpha},\beta^M)$ becomes positive and thus, by Theorem $\ref{Lerch 2}$, the $rm+1$ numbers~$:$
$$1,{\rm{Li}}_{1}(\alpha_1/\beta^M),\ldots,{\rm{Li}}_{r}(\alpha_1/\beta^M),\ldots, {\rm{Li}}_{1}(\alpha_m/\beta^M),\ldots, {\rm{Li}}_{r}(\alpha_m/\beta^M)\enspace,$$ are linearly independent over $K$.
For instance, we consider $r=m=10$ and $M\ge 7933$.  
Since we have ${\rm{log}}\left(\tfrac{1+\sqrt{5}}{2}\right)/2>0.2406059$, we obtain
\begin{align*}
V(\boldsymbol{\alpha},\beta^M)>0.2406059 \times M-1908.70272>0 \enspace.
\end{align*} 
Then the $10^2+1$ numbers,
$$1,{\rm{Li}}_{1}(1/\beta^M),\ldots,{\rm{Li}}_{10}(1/\beta^M),\ldots, {\rm{Li}}_{1}(1/(10\beta^M)),\ldots, {\rm{Li}}_{10}(1/(10\beta^M))\enspace,$$ 
are linearly independent over $\Q(\beta)$.
 
\bigskip

Assume $V(\boldsymbol{\alpha},\beta^M)>0$. Then the linear independence measure of $\{1,{\rm{Li}}_{s}(\alpha_i/\beta^M)\}_{\substack{1\le i \le m \\ 1\le s \le r}}$ is 
\begin{align*}
\mu(\boldsymbol{\alpha},\beta^M,\varepsilon)=\dfrac{\mathbb{A}(\boldsymbol{\alpha},\beta^M)+{{U}}(\boldsymbol{\alpha},\beta^M)}{V(\boldsymbol{\alpha},\beta^M)-\varepsilon}
=\dfrac{(rm+1)M{\rm{log}}\left(\tfrac{1+\sqrt{5}}{2}\right)/2}{V(\boldsymbol{\alpha},\beta^M)-\varepsilon}\enspace.
\end{align*} 
Thus we obtain $$\mu(\boldsymbol{\alpha},\beta^M,\varepsilon)\rightarrow rm+1 \ \ (M\to \infty)\enspace.$$ 
\end{example}
\begin{example}
Let $r,m$ be positive integers. We consider $\boldsymbol{\alpha}:=(1,1/2,\ldots, 1/m)$.
Let $d,M\in \N$ with $d,M\ge 3$. Define $f_{M,d}(X)\in \Q[X]$ by
\begin{align*}
&f_{M,d}(X):=\left(2+\dfrac{1}{M}\right)X^d-\dfrac{2}{M}X^{d-1}-2X+\dfrac{2}{M}\enspace.
\end{align*}
Let $1/\beta_M$ be a root of $f_{M,d}(X)$ and put $K=\Q(\beta_M)$.
Let $v_0$ be an infinite place of $K$ with the absolute value of ${\rm{log}}|1/\beta_M|_{v_0}$ takes the minimal value among $({\rm{log}}|1/\beta_M|_v)_{v\in {{\mathfrak{M}}}^{\infty}_{\Q(\beta_M)}}$.
Put $x=0$. 
Then we have
\begin{align*}
V(\boldsymbol{\alpha},\beta_M)&=\displaystyle  \log\vert\beta_M\vert_{v_0}+rm\log\Vert(\boldsymbol{\alpha},\beta_M)\Vert_{v_0}-rm{\mathrm{h}}(\boldsymbol{\alpha},\beta_M)\\
                                            &-\displaystyle\const-r^2m\enspace.
\end{align*}
Let $M_0$ be the minimal positive integer  with $|\beta^{(g)}_{M_0}|\le 2$ for $2\le g \le d$ and $M$ a positive integer with $M\ge M_0$.
Since ${\rm{den}}(\beta_M)\le 2$ and ${\rm{log}}|\beta_M|_v\le [K_v:\R]{\rm{log}}(2)/d$ for $v|\infty$, we have
\begin{align*}
V(\boldsymbol{\alpha},\beta_M)&\ge \dfrac{1}{d}{\rm{log}}|\beta_M|-rm\left(\dfrac{2d-1}{d}\log\,(2)+\log\,(d_m)\right)\\
&-\displaystyle\const-r^2m\enspace.
\end{align*}
The following tables are about ${\rm{log}}_{10}(M)$ with respect to $3\le d\le 5$ and $(r,m)$ satisfying the right hand side of the above inequality becomes positive. 
First we give the table in the case of $d=3$.
$$
\begin{array}{c|ccccccccc}
m \backslash r & 1 & 2 & 3 & 4 & 5 & 6 & 7 & 8 & 9 \\
\hline
 1 & 6 & 12 & 22 & 32 & 49 & 59 & 85 & 104 & 130 \\
 2 & 16 & 31 & 52 & 75 & 113 & 134 & 190 & 231 & 285 \\
 3 & 28 & 55 & 91 & 129 & 190 & 226 & 314 & 379 & 463 \\
 4 & 44 & 85 & 138 & 194 & 281 & 333 & 457 & 548 & 666 \\
 5 & 63 & 121 & 194 & 270 & 385 & 457 & 618 & 739 & 893 \\
 6 & 84 & 162 & 257 & 356 & 502 & 597 & 798 & 951 & 1143 \\
 7 & 109 & 208 & 329 & 454 & 632 & 752 & 996 & 1183 & 1417 \\
 8 & 136 & 260 & 408 & 561 & 776 & 923 & 1212 & 1437 & 1714 \\
 9 & 166 & 317 & 496 & 680 & 933 & 1110 & 1447 & 1712 & 2035 \\
\end{array}
$$
Second we give the table in the case of $d=4$.
$$
\begin{array}{c|ccccccccc}
 m \backslash r & 1 & 2 & 3 & 4 & 5 & 6 & 7 & 8 & 9 \\
 \hline
 1 & 8 & 16 & 29 & 42 & 66 & 78 & 115 & 140 & 174 \\
 2 & 21 & 41 & 70 & 100 & 151 & 180 & 255 & 309 & 381 \\
 3 & 38 & 74 & 122 & 173 & 255 & 302 & 421 & 508 & 620 \\
 4 & 59 & 114 & 185 & 260 & 376 & 447 & 612 & 734 & 892 \\
 5 & 84 & 162 & 260 & 362 & 515 & 612 & 827 & 989 & 1194 \\
 6 & 113 & 217 & 345 & 477 & 672 & 799 & 1068 & 1272 & 1529 \\
 7 & 145 & 279 & 440 & 607 & 847 & 1007 & 1332 & 1583 & 1895 \\
 8 & 182 & 348 & 547 & 752 & 1039 & 1236 & 1622 & 1922 & 2293 \\
 9 & 222 & 424 & 664 & 910 & 1248 & 1486 & 1936 & 2289 & 2722 \\
\end{array}
$$
Last we give the table in the case of $d=5$.
$$
\begin{array}{c|ccccccccc}
 m \backslash r & 1 & 2 & 3 & 4 & 5 & 6 & 7 & 8 & 9 \\
 \hline
 1 & 10 & 20 & 36 & 53 & 83 & 98 & 144 & 175 & 218 \\
 2 & 26 & 52 & 88 & 126 & 190 & 225 & 320 & 388 & 478 \\
 3 & 47 & 93 & 153 & 217 & 319 & 379 & 528 & 636 & 777 \\
 4 & 74 & 143 & 233 & 326 & 471 & 560 & 767 & 920 & 1117 \\
 5 & 105 & 203 & 326 & 454 & 646 & 767 & 1037 & 1239 & 1496 \\
 6 & 141 & 271 & 432 & 599 & 842 & 1001 & 1337 & 1594 & 1915 \\
 7 & 182 & 349 & 552 & 761 & 1061 & 1262 & 1669 & 1983 & 2373 \\
 8 & 228 & 436 & 685 & 942 & 1301 & 1548 & 2031 & 2408 & 2871 \\
 9 & 278 & 532 & 831 & 1140 & 1564 & 1861 & 2424 & 2867 & 3408 \\
\end{array}
$$
\end{example}

\bibliography{}

\begin{scriptsize}
\begin{minipage}[t]{0.35\textwidth}

Sinnou David,
\\Institut de Math\'ematiques
\\de Jussieu-Paris Rive Gauche
\\CNRS UMR 7586,
Sorbonne Universit\'{e}
\\
4, place Jussieu, 75005 Paris, France\\
\& CNRS UMI 2000 Relax\\
 Chennai Mathematical Institute\\
 H1, SIPCOT IT Park, Siruseri\\
 Kelambakkam 603103, India \\
sinnou.david@imj-prg.fr
%david@imj-prg.fr
\\\\
\end{minipage}
\begin{minipage}[t]{0.31\textwidth}
Noriko Hirata-Kohno, 
\\Department of Mathematics
\\College of Science \& Technology
\\Nihon University
\\Kanda, Chiyoda, Tokyo
\\101-8308, Japan
\\hirata@math.cst.nihon-u.ac.jp\\\\
\end{minipage}
\begin{minipage}[t]{0.29\textwidth}  
Makoto Kawashima,
\\Department of Liberal Arts \\and Basic Sciences
\\College of Industrial Engineering
\\Nihon University
\\Izumi-chou, Narashino, Chiba
\\275-8575, Japan\\
kawashima.makoto@nihon-u.ac.jp\\\\
\end{minipage}

\end{scriptsize}

\end{document}